\newcommand{\arxiv}[1]{\href{http://arxiv.org/abs/#1}{\tt arXiv:\nolinkurl{#1}}}
\newcommand{\arXiv}[1]{\href{http://arxiv.org/abs/#1}{\tt arXiv:\nolinkurl{#1}}}
\newtheorem{theorem}{Theorem}[section]
\newtheorem{lemma}[theorem]{Lemma}
\newtheorem{definition}[theorem]{Definition}
\newtheorem{proposition}[theorem]{Proposition}
\newtheorem{corollary}[theorem]{Corollary}
\newtheorem{question}[theorem]{Question}
\theoremstyle{remark}
\newtheorem{remark}[theorem]{Remark}
\numberwithin{equation}{section}
\newcommand{\nc}{\newcommand}
\nc{\flags}{\mathcal{F}}
\def\wt{\text{wt}}
\nc{\KP}{\operatorname{KP}}
\newcommand{\overbar}[1]{\mkern 1.5mu\overline{\mkern-1.5mu#1\mkern-1.5mu}\mkern 1.5mu}
\def\ii{{\bf i}}
\nc{\re}{re}
\def\j{{\bf j}}
\def\N{\mathbb{N}}
\def\Q{\mathbb{Q}}
\def\C{\mathcal{C}}
\def\Z{\mathbb{Z}}
\def\k{k}
\def\t{\tilde}
\def\A{\mathcal{A}}
\def\sB{\mathcal{B}}
\def\uj{\bf j}
\def\P{\mathcal{P}} 
\def\L{\mathcal{L}}
\def\n{\mathfrak {n}}
\nc{\co}{\nabla}
\def\a{\alpha}
\def\b{\beta}
\def\e{\epsilon}
\def\la{\lambda}
\def\ga{\gamma}
\def\th{\theta}
\def\f{\mathbf{f}}
\def\kk{\mathbf{k}}
\def\g{\mathfrak{g}}
\def\B{\mathbf{B}}
\def\D{\mathbb{D}}
\def\Hom{\operatorname{Hom}}
\def\Ind{\operatorname{Ind}}
\def\seq{\,\mbox{Seq}\,}
\def\soc{\operatorname{soc}}
\def\End{\operatorname{End}}
\def\hd{\operatorname{hd}}
\def\Seq{\,\mbox{Seq}\,}
\def\Res{\operatorname{Res}}
\def\diag{\,\mbox{diag}\,}
\def\mods{\mbox{-mod}}
\def\id{\mbox{id}}
\newcommand{\map}[2]{\,{:}\,#1\!\longrightarrow\!#2}
\def\inv{^{-1}}
\numberwithin{equation}{section}
\title[folding KLR algebras]{Folding KLR algebras}
\address{}\email{maths@petermc.net}
\author{Peter J McNamara}
\date{\today}
\begin{document}

 \begin{abstract}
 This paper develops the theory of KLR algebras with a Dynkin diagram automorphism. This is foundational material intended to allow folding techniques in the theory of KLR algebras.
 \end{abstract}
%

\maketitle

\section{Introduction} 

In Lie theory, the process of folding by a Dynkin diagram automorphism is a technique that can be used to extend theorems originally proved for symmetric Cartan data to all Lie types. This paper develops the theory of folding for KLR algebras.

KLR algebras (named after Khovanov, Lauda and Rouquier) are a family of graded algebras introduced in \cite{khovanovlauda,rouquier} for the purposes of categorifying quantised enveloping algebras.
They also appear in the literature under the name of quiver Hecke algebras.

While KLR algebras exist for arbitrary symmetrisable Cartan data, 
it is known that the KLR algebras in symmetric types have a richer theory with more desirable properties. This is usually a consequence of the geometric interpretation of symmetric KLR algebras \cite{vv,rouquier2,maksimau} or through the theory of $R$-matrices of \cite{kkk}. 
Thus we believe that incorporating a diagram automorphism into the narrative and using the technique of folding is an important way to think about categorified quantum groups in nonsymmetric types, as an alternative to working with nonsymmetric KLR algebras.

The folding constructions performed in this paper are modelled on those of \cite[Chapter 2]{lusztigbook}, where Lusztig constructs the canonical basis using perverse sheaves and a diagram automorphism. In fact when one considers the geometric interpretation of KLR algebras as extension algebras and works over the field $\overbar\Q_l$, the category $\P_\nu$ which we study is equivalent to Lusztig's $\tilde{\mathcal{Q}}_V$.

Our main aim in this paper is to develop the theory of folded KLR algebras, to a depth comparable to that of \cite{khovanovlauda}.  Our main theorems are the categorification theorems, Theorems \ref{projiso} and \ref{simiso}. Our proof is different from \cite{khovanovlauda} in that we do not rely on the quantum Gabber-Kac theorem. Instead, we first identify an appropriate class of simple objects with the crystal $B(\infty)$, using the Kashiwara-Saito characterisation of $B(\infty)$. This identification is Theorem \ref{crystal}, and generalises \cite{laudavazirani}.

We also conclude with a section proving that this categorifcation provides us with a \emph{basis of canonical type}. This concept of a basis of canonical type is motivated from the definition given in \cite{baumann} and is a strengthening of the notion of a perfect basis.

An application of this work to the KLR categorification of cluster algebras will appear in a forthcoming paper \cite{cluster}.

\section{KLR Algebras}

A Cartan datum $(I,\cdot)$ is a set $I$ together with a symmetric bilinear form on the free abelian group $\Z I$, denoted $i\cdot j$ such that
\begin{enumerate}
 \item $i\cdot i \in \{2,4,6,\cdots\}$ for all $i\in I$
 \item $2 \frac{i\cdot j}{i\cdot i} \in \{0,-1,-2,\ldots\}$ for all distinct $i,j\in I$.
\end{enumerate}
Let $d_i=i\cdot i/2$ and $c_{ij}=i\cdot j/d_i$. Then the matrix $C=(c_{ij})_{i,j\in I}$ is a symmetrisable Cartan matrix with $D=\diag(d_i)_{i\in I}$ a symmetrising matrix.

Let $a$ be an automorphism of the Cartan datum $(I,\cdot)$ such that $i\cdot j=0$ if $i$ and $j$ are in the same $a$-orbit. 
Let $n$ be the order of $a$. We assume that $n$ is finite (this is of course automatic if $I$ is finite).



Out of the data of $C$ and $a$, we construct another Cartan datum. Let $J$ be the set of orbits in $I$. We can embed $\Z J$ inside $\Z I$ by sending $j\in J$ to $\sum_{i\in j} i$. If we restrict the symmetric form on $\Z I$ to $\Z J$ we place the structure of a Cartan datum on $J$.
It is known that any Cartan datum $(J,\cdot)$ arises from such a construction where the Cartan datum $(I,\cdot)$ satisfies $i\cdot i=2$ for all $i\in I$. Such Cartan data $(I,\cdot)$ are called symmetric.

Define, for any $\nu\in\N I$,
\[
 \Seq(\nu)=\{\ii=(\ii_1,\ldots,\ii_{|\nu|})\in I^{|\nu|}\mid\sum_{j=1}^{|\nu|} \ii_j=\nu\}.
\]
This is acted upon by the symmetric group $S_{|\nu|}$ in which the adjacent transposition $(i,i+1)$ is denoted $s_i$.

Let $k$ be an algebraically closed field whose characteristic does not divide $n$.
In a similar vein to how simple modules for a KLR algebra over any field are absolutely irreducible, we expect here that we only need to assume that $k$ contains $n$ $n$-th roots of unity. But we do not pursue this question in this paper.

To each $i,j\in I$, we define polynomials $Q_{ij}(u,v)\in k[u,v]$ such that for all $i,j\in I$,
\begin{enumerate}
 \item $Q_{ii}(u,v)=0$
\item If $u$ has degree $d_i$ and $v$ has degree $d_j$ then $Q_{ij}$ is a homogeneous polynomial of degree $-d_i c_{ij}=-d_j c_{ji}$ such that the coefficients of $u^{-c_{ij}}$ and $v^{-c_{ji}}$ are both nonzero.
\item $Q_{ij}(u,v)=Q_{ji}(v,u)$.
\item $Q_{a(i)a(j)}(u,v) = Q_{ij}(u,v)$.
\end{enumerate}

The KLR algebras are defined in terms of this family of polynomials $Q_{i,j}$, though this dependence is suppressed from the notation.
There is a diagrammatic approach to presenting the following generators and relations, which the reader may find more convenient. These diagrams can be found in \cite{khovanovlauda,kl2,tingleywebster}.

\begin{definition}
The KLR algebra $R(\nu)$ is the associative $\k$-algebra generated by elements $e_{\bf i}$, $y_j$, $\tau_k$ with ${\bf i}\in \seq(\nu)$, $1\leq j\leq |\nu|$ and $1\leq k< |\nu|$, subject to the relations
\begin{equation} \label{eq:KLR}
\begin{aligned}
& e_\ii e_{\uj} = \delta_{\ii, \uj} e_\ii, \ \
\sum_{\ii \in {\rm Seq}(\nu)}  e_\ii = 1, \\
& y_{k} y_{l} = y_{l} y_{k}, \ \ y_{k} e_\ii = e_\ii y_{k}, \\
& \tau_{l} e_\ii = e_{s_{l}\ii} \tau_{l}, \ \ \tau_{k} \tau_{l} =
\tau_{l} \tau_{k} \ \ \text{if} \ |k-l|>1, \\
& \tau_{k}^2 e_\ii = Q_{\ii_{k}, \ii_{k+1}} (y_{k}, y_{k+1})e_\ii, \\
& (\tau_{k} y_{l} - y_{s_k(l)} \tau_{k}) e_\ii = \begin{cases}
-e_\ii \ \ & \text{if} \ l=k, \ii_{k} = \ii_{k+1}, \\
e_\ii \ \ & \text{if} \ l=k+1, \ii_{k}=\ii_{k+1}, \\
0 \ \ & \text{otherwise},
\end{cases} \\[.5ex]
& (\tau_{k+1} \tau_{k} \tau_{k+1}-\tau_{k} \tau_{k+1} \tau_{k}) e_\ii\\
& =\begin{cases} \dfrac{Q_{\ii_{k}, \ii_{k+1}}(y_{k},
y_{k+1}) - Q_{\ii_{k}, \ii_{k+1}}(y_{k+2}, y_{k+1})}
{y_{k} - y_{k+2}}e_\ii\ \ & \text{if} \
\ii_{k} = \ii_{k+2}, \\
0 \ \ & \text{otherwise}.
\end{cases}
\end{aligned}
\end{equation}
\end{definition}

If $w$ is a permutation in $S_{|\nu|}$, write $w$ as a reduced product of simple reflections $w=s_{i_1}\ldots s_{i_n}$ and define $\tau_w=\tau_{i_1}\ldots\tau_{i_n}$. In general this depends on the choice of a reduced product but we will only use it in this paper for those $w$ for which $\tau_w$ is well-defined.

The KLR algebras $R(\nu)$ are $\Z$-graded, where $e_{\ii}$ is of degree zero, $y_j e_\ii$ is of degree $\ii_j \cdot \ii_j$ and $\tau_k e_\ii$ is of degree $-\ii_k \cdot \ii_{k+1}$. All $R(\nu)$-modules will always be assumed to be graded left modules. 
If $V$ is any $\Z$-graded vector space, we use $V_i$ to denote its $i$-th graded piece.
Given a $R(\nu)$-module $M$, its grading shift is denoted $qM$, this is the module with $(qM)_i=M_{i-1}$.

If $M$ and $N$ are two $R(\nu)$-modules and $d\in \Z$, we define $\Hom_{R(\nu)}(M,N)_d$ to be the space of graded $R(\nu)$-module homomorphisms from $M$ to $q^dN$.

Because of the condition $Q_{a(i)a(j)}(u,v) = Q_{ij}(u,v)$ on the polynomials $Q_{ij}$, the automorphism $a$ of the Cartan datum $(I,\cdot)$ induces an isomorphism $R(a\nu)\cong R(\nu)$. The most important case for us will be when $a\nu=\nu$, when $a$ induces an automorphism of the algebra $R(\nu)$, which we shall also call $a$.

%

Now consider $\nu$ such that $a\nu=\nu$. 
An $R(\nu)$-module structure on a vector space $V$ is the same as a homomorphism from $R(\nu)$ to $\End_k(V)$. If we precompose with the automorphism $a$ of $R(\nu)$, we get a new $R(\nu)$-module structure on the same vector space. This autoequivalence of the category of $R(\nu)$-modules is denoted $a^*$.

Let $\C_\nu$ be the category whose objects are pairs $(M,\sigma)$ where $M$ is a representation of $R(\nu)$ and $\sigma\map{a^*M}{M}$ is an isomorphism such that
\begin{equation}\label{defcnu}
 \sigma\circ a^*\sigma \circ \cdots \circ(a^*)^{n-1}\sigma = \mathrm{id}_M.
\end{equation}
 A morphism from $(M,\sigma)$ to $(M',\sigma')$ in $\C_\nu$ is a $R(\nu)$-module map $f\map{M}{M'}$ such that the following diagram commutes:
 \[
  \begin{CD}
   a^*M @>a^*f>> a^*M'\\
   @VV\sigma V   @VV\sigma' V\\
   M @>f>> M'.
  \end{CD}
 \]

 The following easy observation gives an alternative way to think about the category $\C_\nu$.
 \begin{lemma}\label{lemma:easy}
 The category $\C_\nu$ is equivalent to the category of graded representations of the smash product $R(\nu)\#\Z/n$.
\end{lemma}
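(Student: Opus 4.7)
The plan is to exhibit functors in both directions that act as the identity on underlying vector spaces, and to check that the two packages of data match up tautologically. First I would unpack what an $R(\nu)\#\Z/n$-module is: since $\Z/n$ acts on $R(\nu)$ through the powers of $a$, such a module structure on a vector space $M$ consists of an $R(\nu)$-action together with a linear automorphism $\sigma$ of $M$ (the action of a chosen generator of $\Z/n$) satisfying the two conditions $\sigma(r\cdot m)=a^{-1}(r)\cdot\sigma(m)$ for all $r\in R(\nu)$, $m\in M$, and $\sigma^n=\id_M$.

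Next I would translate each of these conditions into the language of $\C_\nu$. The module $a^*M$ shares its underlying vector space with $M$ and carries the $R(\nu)$-action $r\cdot_{a^*}m=a(r)\cdot m$, so the identity $\sigma(a(r)\cdot m)=r\cdot\sigma(m)$ (equivalent to the displayed condition after the substitution $r\mapsto a^{-1}(r)$) is exactly the statement that $\sigma$ is an $R(\nu)$-module homomorphism $a^*M\to M$. Moreover, because $a^*$ is the identity functor on underlying vector spaces and on linear maps, each $(a^*)^k\sigma$ agrees with $\sigma$ as a linear endomorphism of $M$; hence the coherence relation \eqref{defcnu} simply collapses to $\sigma^n=\id_M$. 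This is precisely the remaining condition in the smash product description.

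Therefore, sending $(M,\sigma)\in\C_\nu$ to $M$ equipped with the $R(\nu)$-action it already carries together with the $\Z/n$-action where the chosen generator acts as $\sigma$ defines a functor to $(R(\nu)\#\Z/n)\text{-mod}$, and the manifestly inverse assignment sends an $R(\nu)\#\Z/n$-module to the pair $(M,\sigma)$ where $\sigma$ is the action of that generator. On morphisms, a map $f\colon(M,\sigma)\to(M',\sigma')$ in $\C_\nu$ is by definition an $R(\nu)$-module map with $f\circ\sigma=\sigma'\circ a^*f=\sigma'\circ f$, which is the same thing as equivariance for the generator of $\Z/n$, hence the same as a morphism of $R(\nu)\#\Z/n$-modules.

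There is no real obstacle here; the only point requiring care is to fix one convention for the $\Z/n$-action on $R(\nu)$ (via $a$ versus $a^{-1}$) so that the twisting in the smash product matches the twisting $a^*$ used to define $\C_\nu$, and to use that convention consistently. With that pinned down the equivalence is a direct unwinding of definitions, which is why the statement is advertised as an easy observation.
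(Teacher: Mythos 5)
Your proof is correct, and since the paper offers no proof at all (it labels this an ``easy observation''), the direct unwinding of definitions you give is exactly the intended argument. You correctly identify the only two points that require any thought: that the coherence relation $\sigma\circ a^*\sigma\circ\cdots\circ(a^*)^{n-1}\sigma=\id_M$ collapses to $\sigma^n=\id_M$ because $a^*$ is literally the identity on underlying vector spaces and linear maps, and that one must pin down the convention (generator of $\Z/n$ acting by $a$ versus $a^{-1}$) so that $\sigma$ itself, rather than $\sigma^{-1}$, is the module map $a^*M\to M$.
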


 Let $\P_\nu$ be the full subcategory of finitely generated projective objects in $\C_\nu$ (an object of $\C_\nu$ is finitely generated if it is finitely generated as a module for $R(\nu)\#\Z/n$, equivalently if the underlying $R(\nu)$-module is finitely generated). The categorically minded reader may wish to parse this as the category of compact projective objects of $\C_\nu$.

 Let $\L_\nu$ be the full subcategory of $\C_\nu$ whose objects are pairs $(M,\sigma)$ where $M$ is finite dimensional, or equivalently the full subcategory of finite length objects. We have the following obvious corollary of Lemma \ref{lemma:easy}:

\begin{corollary}
 The categories $\C_\nu$ and $\L_\nu$ are abelian.
\end{corollary}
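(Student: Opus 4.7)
The plan is to reduce everything to the previous lemma. By Lemma~\ref{lemma:easy}, $\C_\nu$ is equivalent to the category of (graded) left modules over the smash product algebra $R(\nu)\#\Z/n$. For any ring the category of left modules is abelian, so $\C_\nu$ inherits an abelian structure from this equivalence. Concretely, a morphism $f\map{(M,\sigma)}{(M',\sigma')}$ in $\C_\nu$ has an underlying $R(\nu)$-module map, and one checks that $\ker f$ and $\coker f$ carry canonical induced isomorphisms $a^*\ker f\isomto\ker f$ and $a^*\coker f\isomto\coker f$ (coming from $\sigma$, resp.\ $\sigma'$, by functoriality of $a^*$) which still satisfy the cocycle condition~\eqref{defcnu}; these are the kernel and cokernel in $\C_\nu$. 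Exactness of $a^*$ (it is an autoequivalence of $R(\nu)\mods$) is what ensures that these constructions land in $\C_\nu$.

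For $\L_\nu$, the statement is that finite dimensionality is preserved by the abelian operations. Since submodules, quotient modules and extensions of finite dimensional $R(\nu)$-modules are finite dimensional, and since the abelian structure on $\C_\nu$ is computed on underlying $R(\nu)$-modules, the full subcategory $\L_\nu$ is closed under kernels, cokernels and extensions in $\C_\nu$. Therefore $\L_\nu$ is an abelian subcategory of $\C_\nu$.

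There is no real obstacle; the only mild point worth spelling out is the verification that the induced maps on kernels and cokernels in $\C_\nu$ indeed satisfy~\eqref{defcnu}, which is immediate from naturality of $\sigma$ in $M$ and the fact that $\sigma$ itself satisfies this cocycle identity on $M$ and $M'$.
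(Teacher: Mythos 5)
Your argument is exactly the paper's: the corollary is stated as an immediate consequence of Lemma~\ref{lemma:easy}, which identifies $\C_\nu$ with modules over the smash product $R(\nu)\#\Z/n$, and $\L_\nu$ is then the full subcategory of finite length objects, which is automatically abelian. The extra details you supply about kernels and cokernels carrying induced cocycle data are correct but not needed once the equivalence with a module category is in hand.
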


\section{The Grothendieck Group Construction}

Let $\zeta_n$ be a primitive $n$-th root of unity in $\mathbb{C}$.
Fix once and for all a ring homomorphism $\Z[\zeta_n]\to k$.

If $(M,\sigma)$ is an object of $\C_\nu$, then so is $\zeta_n(M,\sigma):=(M,\zeta_n\sigma)$.

An object $(A,\phi)$ of $\C_\nu$ is said to be \emph{traceless} if there is a representation $M$ of $R(\nu)$, an integer $t\geq 2$ dividing $n$ such that $(a^*)^tM\cong M$, and an isomorphism
\[
 A\cong M\oplus a^*M \oplus \cdots \oplus (a^*)^{t-1}M
\]
under which $\phi$ corresponds to an isomorphism carrying the summand $(a^*)^jM$ onto $(a^*)^jM$ for $1\leq j<t$ and the summand $(a^*)^tM$ onto $M$.

The group $K(\P_\nu)$ is defined to be the $\Z[\zeta_n]$-module generated by symbols $[(M,\sigma)]$ where $(M,\sigma)$ is an object of $\P_\nu$, subject to the relations
\begin{align*}
 [X] &= [X']+[X''] &  \text{if $X\cong X'\oplus X''$}\\
 [(M,\zeta_n\sigma)]&=\zeta_n[(M,\sigma)] & \\
 [X] &= 0 &\text{if $X$ is traceless}
\end{align*}

The group $K(\L_\nu)$ is defined similarly from the category $\L_\nu$, except that in place of the first relation, we have
\[
 [X]=[X']+[X'']
\]
if there is a short exact sequence $0\to X'\to X \to X''\to 0$.

Both $K(\P_\nu)$ and $K(L_\nu)$ are $\Z[\zeta_n,q,q\inv]$-modules, where $q$ acts by a grading shift.

Given two objects $(M,\sigma)$ and $(N,\tau)$ in $\C_\nu$, there is an induced automorphism of $\Hom_{R(\nu)}(M,N)$, namely
\[
 f\mapsto (a^*)\inv (\tau^{-1}\circ f \circ \sigma).
\]
We will call this automorphism $a_{\sigma\tau}$. It allows us to define a pairing in the following Lemma, whose proof is straightforward and omitted.

\begin{lemma}
The form defined by the below displayed equation descends to a semilinear pairing $K(\P_\nu)\times K(\L_\nu) \to \Z[\zeta_n]((q))$
\begin{equation}\label{pairing}
 \langle [(M,\sigma)],[(N,\tau)]\rangle =\sum_{d\in\Z}\mathrm{tr}(a_{\sigma\tau},\Hom_{R(\nu)}(M,N)_d)q^d
\end{equation}
\end{lemma}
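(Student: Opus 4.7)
The plan is to verify three things in turn: that the right-hand side of (\ref{pairing}) lies in $\Z[\zeta_n]((q))$, that it is additive with respect to the defining relations of the Grothendieck groups, and that it vanishes on the traceless relation. Since $a_{\sigma\tau}$ preserves the grading, the graded trace is defined in each degree. To place the sum in $\Z[\zeta_n]((q))$ I would check that when $(M,\sigma)$ is finitely generated projective in $\C_\nu$ and $(N,\tau)$ is finite-dimensional, each graded piece $\Hom_{R(\nu)}(M,N)_d$ is finite-dimensional and the grading is bounded below; this uses that $R(\nu)$ is a locally finite graded algebra with degrees bounded below, properties inherited by finitely generated projectives.

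Next I would verify the defining relations on both Grothendieck groups. Direct-sum additivity on $K(\P_\nu)$ is automatic, since $\Hom$ splits compatibly and $a_{\sigma\tau}$ acts block-diagonally on the decomposition. The $\zeta_n$-rescaling relations on both sides follow from the identities $a_{\zeta_n\sigma,\tau} = \zeta_n a_{\sigma\tau}$ and $a_{\sigma,\zeta_n\tau} = \zeta_n^{-1} a_{\sigma\tau}$, immediate from the definition $a_{\sigma\tau}(f) = (a^*)^{-1}(\tau^{-1}\circ f\circ\sigma)$; these two scalar factors also deliver the claimed semilinearity (linear in the left argument, conjugate-linear in the right). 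For the short exact sequence relation on $K(\L_\nu)$, projectivity of $(M,\sigma)$ as an $R(\nu)\#\Z/n$-module (via Lemma \ref{lemma:easy}) makes $\Hom((M,\sigma),-)$ exact in each degree, and the three resulting operators $a_{\sigma\tau'}, a_{\sigma\tau}, a_{\sigma\tau''}$ fit into a morphism of short exact sequences, so additivity of trace in short exact sequences closes the argument.

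The step requiring the most care --- and which I regard as the main obstacle --- is the vanishing on traceless objects. For $(A,\phi)$ with $A\cong\bigoplus_{j=0}^{t-1}(a^*)^jM$ and $\phi$ matching summands of $a^*A$ with summands of $A$ as in the definition, I would use the decomposition $\Hom_{R(\nu)}(A,N)=\bigoplus_{j=0}^{t-1}\Hom_{R(\nu)}((a^*)^jM,N)$ and track how $a_{\phi\tau}$ permutes these summands. Because $\phi$ implements a cyclic shift of the summand labels modulo $t$ (sending the $j$-th summand of $a^*A$ to the $(j+1)$-st summand of $A$ for $0\leq j<t-1$, and the last into the first via the chosen isomorphism $(a^*)^tM\cong M$), the operator $a_{\phi\tau}$ is block-cyclic of length $t\geq 2$ with every diagonal block zero. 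Its graded trace therefore vanishes identically, the traceless relation holds, and (\ref{pairing}) descends to the claimed semilinear pairing $K(\P_\nu)\times K(\L_\nu)\to\Z[\zeta_n]((q))$.
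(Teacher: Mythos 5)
The paper itself omits this proof, declaring it ``straightforward,'' so there is no authorial argument to compare against; I evaluate your proposal on its own merits. Your overall architecture is right, and you correctly isolate the traceless relation as the one step with real content: the block-cyclic description of $a_{\phi\tau}$ on $\Hom_{R(\nu)}(A,N)=\bigoplus_{j}\Hom_{R(\nu)}((a^*)^jM,N)$, with $t\geq 2$ blocks and vanishing diagonal, is exactly the right picture and does give zero trace in every degree. The $\zeta_n$-semilinearity computation and the use of projectivity of $M$ over $R(\nu)$ (pulled back from $R(\nu)\#\Z/n$) for the short exact sequence relation are also sound.

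Two points are missing and should be filled in. First, you argue that each $\Hom_{R(\nu)}(M,N)_d$ is finite-dimensional, which only places the graded trace in $k((q))$; the lemma claims the target is $\Z[\zeta_n]((q))$. You need to observe that $a_{\sigma\tau}$ has order dividing $n$ (this follows from iterating the definition of $a_{\sigma\tau}$ and using the normalisation \eqref{defcnu} for both $\sigma$ and $\tau$), so that, as $\operatorname{char}k\nmid n$, it is semisimple with eigenvalues that are $n$-th roots of unity in $k$; via the fixed homomorphism $\Z[\zeta_n]\to k$ these lift canonically, and the trace then takes values in $\Z[\zeta_n]$. Second, the relation ``$[X]=0$ if $X$ is traceless'' is imposed in \emph{both} Grothendieck groups, and you only verify it when the traceless object sits in the $K(\P_\nu)$ slot. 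The argument for a traceless $(N,\tau)\in\L_\nu$ is parallel (a block-cyclic operator on $\bigoplus_j\Hom_{R(\nu)}(M,(a^*)^jM')$ with zero diagonal blocks), but it needs to be stated; without it, descent on the second factor has not been established.
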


Let $\psi$ be the antiautomorphism of $R(\nu)$ which sends each of the generators $e_\ii$, $y_j$ and $\tau_k$ to themselves.
If $P$ is a $R(\nu)$-module, we denote by $P^\psi$ the right $R(\nu)$-module whose underlying space is $P$ and the action of $R(\nu)$ is given by $p\cdot r=\psi(r)p$ for $p\in P$ and $r\in R(\nu)$.

Now given isomorphisms $\sigma\map{a^*P}{P}$ and $\tau\map{a^*Q}{Q}$, there is an induced isomorphism $\tau\otimes\sigma\map{a^*Q^\psi\otimes_{R(\nu)} a^*P}{Q^\psi\otimes_{R(\nu)} P}$. Since $a^*Q^\psi\otimes_{R(\nu)}a^*P$ is canonically isomorphic to $Q^\psi\otimes_{R(\nu)}P$, we can consider $\tau\otimes\sigma$ as an automorphism of $Q^\psi\otimes_{R(\nu)}P$. Using $\tau\otimes \sigma$, we also obtain a pairing of interest, whose existence proof is again straightforward and omitted.

\begin{lemma}
The form defined by the below displayed equation descends to a symmetric bilinear pairing $K(\P_\nu)\times K(\P_\nu)\to \Z[\zeta_n]((q))$.
\begin{equation}\label{projpairing}
 ([(P,\sigma)],[(Q,\tau)])=\sum_{d\in \Z}\mathrm{tr}(\tau\otimes \sigma,(Q^\psi\otimes_{R(\nu)}P)_d)q^d.
\end{equation}
\end{lemma}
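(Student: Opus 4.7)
The strategy is to verify that the right-hand side of (\ref{projpairing}) is well-defined in $\Z[\zeta_n]((q))$, respects each defining relation of $K(\P_\nu)$ in each variable, and satisfies the symmetry property. Well-definedness is the starting point: a finitely generated projective object of $\C_\nu$ is a direct summand of a free $R(\nu)\#\Z/n$-module, and $R(\nu)$ has finite-dimensional graded pieces bounded below in degree (via the standard $\tau_w y^a e_\ii$ basis). So $Q^\psi \otimes_{R(\nu)} P$ inherits the same property, and the sum in (\ref{projpairing}) is an element of $k((q))$. Relation (\ref{defcnu}) forces $\tau\otimes\sigma$ to have order dividing $n$ on $Q^\psi\otimes_{R(\nu)} P$; since $\mathrm{char}(k)\nmid n$, the action diagonalises on each graded piece with $n$-th root of unity eigenvalues, and recording the eigenspace dimensions lifts the trace canonically from $k$ to $\Z[\zeta_n]$.

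Next I would check each of the three defining relations of $K(\P_\nu)$ in each variable. Additivity on direct sums is immediate, as the tensor product, the grading, and the trace all split additively. For $[(M,\zeta_n\sigma)]=\zeta_n[(M,\sigma)]$, rescaling $\sigma$ by the central scalar $\zeta_n$ rescales $\tau\otimes\sigma$ by $\zeta_n$, and the trace scales accordingly; the same argument works in the other entry. For the traceless relation, suppose $(A,\phi)\cong(\bigoplus_{j=0}^{t-1}(a^*)^jM,\phi)$ with $t\geq 2$ and $\phi$ cyclically shifting the summands. Tensoring with $Q^\psi$ preserves this direct sum decomposition, and the block matrix of $\tau\otimes\phi$ has zero diagonal blocks (since $\phi$ sends each summand to a distinct one), so its trace vanishes in every degree.

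For symmetry, the key observation is that $\psi$ is an involutive anti-automorphism of $R(\nu)$ (it squares to the identity because it fixes each generator). This yields a well-defined $k$-linear isomorphism
\[
\Phi\map{Q^\psi\otimes_{R(\nu)} P}{P^\psi\otimes_{R(\nu)} Q}, \qquad q\otimes p \longmapsto p\otimes q,
\]
whose inverse sends $p\otimes q$ back to $q\otimes p$; well-definedness uses $\psi^2=\mathrm{id}$ in an essential way. Because $a$ and $\psi$ commute on generators, $\Phi$ intertwines $\tau\otimes\sigma$ with $\sigma\otimes\tau$, so the two traces coincide in every degree. Finally, compatibility with the grading shift $q$ is automatic from the reindexing of coefficients, yielding $\Z[\zeta_n,q,q^{-1}]$-bilinearity.

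The only genuinely delicate step is the traceless case: one has to unwind the canonical identifications $a^*Q^\psi\otimes_{R(\nu)} a^*((a^*)^jM)\cong Q^\psi\otimes_{R(\nu)}(a^*)^{j+1}M$ used to promote $\tau\otimes\phi$ to an endomorphism of a single space, so that the cyclic nature of $\phi$ really does translate into an off-diagonal block matrix. Once this bookkeeping is carried out, vanishing of the trace is transparent, and every other clause reduces to standard trace manipulations together with the involutive, $a$-commuting behaviour of $\psi$.
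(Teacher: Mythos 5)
The paper omits this proof entirely, declaring it ``straightforward,'' so there is no authors' argument to compare against; your write-up correctly fills the gap along the expected lines. You check each of the three defining relations of $K(\P_\nu)$ in each variable, obtain integrality of the trace in $\Z[\zeta_n]$ from the fact that $\tau\otimes\sigma$ has order dividing $n$ over a field of characteristic prime to $n$, handle the traceless relation by observing that $\tau\otimes\phi$ cyclically permutes the $t\geq 2$ tensor-product summands and so has vanishing block-diagonal, and prove symmetry via the flip $q\otimes p\mapsto p\otimes q$, whose well-definedness indeed rests on $\psi^2=\mathrm{id}$ and whose compatibility with the canonical $a^*$-identifications uses that $\psi$ and $a$ commute on the generators. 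The only blemish is a small indexing slip when you write the canonical identification as $a^*Q^\psi\otimes_{R(\nu)} a^*((a^*)^jM)\cong Q^\psi\otimes_{R(\nu)}(a^*)^{j+1}M$ (the canonical identification removes the $a^*$ from both tensor factors, landing in $Q^\psi\otimes_{R(\nu)}(a^*)^{j}M$; the shift to $j+1$ is supplied by $\phi$ itself), but this does not affect the conclusion that the diagonal blocks vanish.
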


\begin{remark}
 There is also a semilinear pairing $\langle\cdot,\cdot\rangle$ on $K(\P_\nu)$ defined using the $\Hom$ form using the same formula (\ref{pairing}). It is related to this bilinear pairing by $(x,y)=\langle x,\bar{y} \rangle$.
\end{remark}

%

\begin{lemma}\label{equivariantstructure}
 Let $L$ be a simple $R(\nu)$-module. Let $t$ be the minimal integer such that $(a^*)^tL\cong L$. Then there exists  $\sigma$ such that $(L\oplus a^*L \oplus\cdots \oplus (a^*)^{t-1}L,\sigma)$ is an object of $\C_\nu$.
\end{lemma}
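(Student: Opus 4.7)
The plan is to construct $\sigma$ explicitly, starting from any graded-module isomorphism $(a^*)^t L \to L$ and rescaling it by a suitable scalar so that the cocycle condition \eqref{defcnu} holds. Write $N_i := (a^*)^i L$, so that $M = \bigoplus_{i=0}^{t-1} N_i$ and $a^*M = \bigoplus_{i=0}^{t-1} N_{i+1}$. Fix a degree-zero isomorphism $\phi_0 \colon N_t \to N_0$, and for a scalar $c \in k^{\times}$ to be chosen below set $\phi := c\phi_0$. Define $\sigma \colon a^*M \to M$ summand-wise: on the summand $N_{i+1}$ of $a^*M$ for $0 \le i \le t-2$ use the identity map into the $(i+1)$-st summand of $M$, and on the summand $N_t$ of $a^*M$ use $\phi$ to map into the $0$-th summand $N_0 = L$ of $M$.

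To verify $\sigma \circ a^*\sigma \circ \cdots \circ (a^*)^{n-1}\sigma = \mathrm{id}_M$, I would use two facts. First, $a^n = \mathrm{id}_{R(\nu)}$, so $(a^*)^n M = M$ strictly as an $R(\nu)$-module. Second, for any morphism $f$ of $R(\nu)$-modules, the linear map $a^*f$ on underlying vector spaces is literally equal to $f$. Together these imply that the iterated composite above, viewed as a linear endomorphism of the underlying vector space of $M$, equals the $n$-th power $\sigma^n$ of the linear map $\sigma$. A direct computation using the explicit description of $\sigma$ then shows that $\sigma^n$ acts on each summand $N_i$ (identified with the underlying vector space of $L$) by the $m$-fold iterate of $\phi$, where $m := n/t$; at the module level this is the composite $\phi \circ (a^*)^t\phi \circ (a^*)^{2t}\phi \circ \cdots \circ (a^*)^{(m-1)t}\phi$ from $N_{mt} = L$ to $L$.

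This latter composite is a degree-zero $R(\nu)$-module endomorphism of the simple graded module $L$, hence a scalar by Schur's lemma (using that $k$ is algebraically closed). Writing $\phi_0^m = \lambda \cdot \mathrm{id}_L$ with $\lambda \in k^{\times}$ gives $\phi^m = c^m \lambda \cdot \mathrm{id}_L$, so choosing $c$ to be any $m$-th root of $\lambda^{-1}$ (which exists by algebraic closedness of $k$) yields $\sigma^n = \mathrm{id}_M$ and thus $(M,\sigma) \in \C_\nu$. The main point requiring care is the observation that $a^*$ acts trivially on morphisms at the level of underlying linear maps; this is what collapses the iterated cocycle composite to the plain power $\sigma^n$ and reduces the verification to adjusting a single nonzero scalar.
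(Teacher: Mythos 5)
Your proof is correct and follows essentially the same approach as the paper: both construct $\sigma$ as a cyclic shift with a rescaled isomorphism $(a^*)^tL\to L$ on the wrap-around entry, use Schur's lemma to show the relevant $m$-fold composite is a nonzero scalar, and invoke algebraic closedness of $k$ to choose the rescaling so that $\sigma\circ a^*\sigma\circ\cdots\circ(a^*)^{n-1}\sigma=\mathrm{id}_M$. Your explicit remark that $a^*$ acts as the identity on underlying linear maps makes precise what is implicit in the paper's matrix computation, but it is the same argument.
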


\begin{remark}
 As the proof will indicate, this lemma and Theorem \ref{classifysimples} below are very general results about representations of an algebra with an action of a finite cyclic group. In particular, the same statements hold when $R(\nu)$ is replaced by $R(\la)\otimes R(\mu)$.
\end{remark}

\begin{proof}
Let $\sigma_0$ be an isomorphism from $(a^*)^tL$ to $L$. We have the canonical identification
\[
 a^*(L\oplus a^* L\oplus\cdots \oplus  (a^*)^{t-1}L)\cong a^*L\oplus (a^*)^2L\oplus\cdots\oplus (a^*)^tL.
\]
In block form, choose $\sigma$ to be of the form
$$
\begin{pmatrix}
 0  & 0 & \cdots & 0 & \la \sigma_0 \\
 1   & 0 & \cdots & 0 & 0 \\
 0  & 1 & \cdots & 0 & 0 \\
 \vdots & \vdots & \ddots  & \vdots & \vdots \\
0 & 0  & \cdots & 1 & 0 \end{pmatrix}
$$ where $\la\in k$.
In order for (\ref{defcnu}) to hold, we must have $$\la^{n/t}\sigma_0\circ (a^*)^t\sigma_0\circ\cdots\circ (a^*)^{n-t}\sigma_0=\id_L.$$ By Schur's Lemma $\sigma_0\circ (a^*)^t\sigma_0\circ\cdots\circ(a^*)^{n-t}\sigma_0$ is a nonzero scalar. Since $k$ is algebraically closed, there exists $\la$ satisfying this equation, completing the proof.
\end{proof}

It is clear that all modules formed in this way are simple in $\C_\nu$ and for different choices of $\la$ that they produce pairwise nonisomorphic simples.

 \begin{theorem}\label{classifysimples}
The construction of Lemma \ref{equivariantstructure} provides a classification of all simple objects in $\C_\nu$. In particular,
 a simple object of $\C_\nu$ is either traceless or of the form $(L,\sigma)$ with $L$ a simple $R(\nu)$-module.
\end{theorem}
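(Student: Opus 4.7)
The plan is to show that any simple object $(X,\phi)$ of $\C_\nu$ arises from the construction of Lemma \ref{equivariantstructure} for some simple $R(\nu)$-module $L$; the dichotomy in the statement then reads off, since for the minimal $t\geq 1$ with $(a^*)^tL\cong L$ the case $t=1$ produces $(L,\sigma)$ with $L$ a simple $R(\nu)$-module, while $t\geq 2$ coincides with the definition of traceless taking $M=L$. Since $X$ is cyclic over $R(\nu)\#\Z/n$ it is finitely generated over $R(\nu)$, and combined with the finite-dimensionality of simple $R(\nu)$-modules the socle step below forces $X$ itself to be finite dimensional.

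First I would show that $\soc_{R(\nu)}(X)$ is a subobject of $X$ in $\C_\nu$: because $\phi\colon a^*X\isomto X$ is an $R(\nu)$-isomorphism, it identifies $a^*\soc_{R(\nu)}(X)=\soc_{R(\nu)}(a^*X)$ with $\soc_{R(\nu)}(X)$, and by simplicity of $X$ this socle equals $X$, so $X$ is semisimple over $R(\nu)$. Picking a simple $R(\nu)$-submodule $L\subseteq X$, define $L_j:=\phi^{(j)}((a^*)^jL)$ inductively, where $\phi^{(j)}:=\phi\circ a^*\phi\circ\cdots\circ(a^*)^{j-1}\phi$; each $L_j$ is a simple $R(\nu)$-submodule isomorphic to $(a^*)^jL$, and $\phi(a^*L_j)=L_{j+1}$. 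The relation $\phi^{(n)}=\id_X$ coming from \eqref{defcnu} forces $L_n=L_0$, so $\sum_jL_j$ is $\phi$-stable and hence equals $X$. Letting $t$ be the least positive integer with $(a^*)^tL\cong L$, grouping the $L_j$ with $j\equiv r\pmod t$ recovers the $(a^*)^rL$-isotypic component $N_r$ of $X$, and $\phi$ cyclically rotates the $N_r$.

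The crux is to show that the $L$-multiplicity $m$ of $N_0$ is $1$. Write $N_0\cong L\otimes_k V$ with $\dim V=m$. After fixing an isomorphism $(a^*)^tL\isomto L$, the restriction of $\phi^{(t)}$ to $(a^*)^tN_0$ becomes an $R(\nu)$-endomorphism $\Psi\in\End_{R(\nu)}(L\otimes V)\cong M_m(k)$. If $m\geq 2$ then $\Psi$ admits an eigenvector by algebraic closure of $k$, producing a proper nonzero $\Psi$-stable subspace $W\subsetneq V$. Setting $U_j:=\phi^{(j)}((a^*)^j(L\otimes W))\subseteq N_j$ for $0\leq j<t$, the $\Psi$-stability of $L\otimes W$ translates into the identity $\phi(a^*U_{t-1})=U_0$, making $\bigoplus_j U_j$ a proper $\phi$-stable $R(\nu)$-submodule of $X$. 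This contradicts simplicity, so $m=1$.

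With $m=1$ one has $X\cong L\oplus a^*L\oplus\cdots\oplus(a^*)^{t-1}L$, and $\phi$ takes the precise block form written down in the proof of Lemma \ref{equivariantstructure}; the scalar $\la$ there is determined by the eigenvalue of $\Psi$ together with the constraint \eqref{defcnu}. The stated dichotomy now reads off as described at the outset. The hard part will be the multiplicity paragraph: producing a $\Psi$-invariant subspace $W$ is routine, but one must verify carefully that propagating it through the $a^*$-orbit of isotypic components honours the global $\phi$-equivariance, which is exactly what the closure identity $\phi(a^*U_{t-1})=U_0$ provides.
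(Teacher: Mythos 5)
Your proof is correct, and it takes a genuinely different route from the paper. The paper uses the induction--restriction adjunction for $R(\nu)\hookrightarrow R(\nu)\#\Z/n$: a simple object $(S,\sigma)$ receives a surjection from $\Ind_{R(\nu)}^{\C_\nu}(L)$ for $L$ a simple $R(\nu)$-submodule of $S$, and then a dimension count shows $\Ind_{R(\nu)}^{\C_\nu}(L)\cong\bigoplus_\la X(L,\la,\sigma_0)$ is exactly the direct sum of the modules produced by Lemma \ref{equivariantstructure}, so $(S,\sigma)$ must be one of the summands. You instead work inside $X$ directly: the $\phi$-stability of $\soc_{R(\nu)}X$ forces $X$ to be $R(\nu)$-semisimple, propagating one simple summand $L$ through $\phi$ exhausts $X$, and the multiplicity-one claim is settled by a Schur-type eigenvector argument producing a proper $\phi$-stable submodule when $m\geq 2$. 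Your closure check $\phi(a^*U_{t-1})=U_0$, which is exactly the $\Psi$-stability of $L\otimes W$, is the key point and is handled correctly. The paper's route is shorter because exhibiting all the simple quotients of $\Ind(L)$ at once makes both the existence and the exhaustiveness of the classification fall out of the same computation; your route avoids analysing the induced module entirely and isolates the essential structural fact (multiplicity one in each isotypic component) more explicitly, which some readers may find more transparent. One minor remark: both your argument and the paper's silently use that a simple object of $\C_\nu$ has a nonzero $R(\nu)$-socle; this is justified because simple $R(\nu)$-modules, and hence simple $R(\nu)\#\Z/n$-modules, are finite dimensional, a standing assumption of the paper visible in the definition of $\L_\nu$, but it would be worth saying so rather than folding it into the phrase ``the socle step below forces $X$ to be finite dimensional,'' which reads as if the socle step comes for free.
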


\begin{proof}
We use the Induction-Restriction adjunction for the inclusion of algebras $R(\nu)\hookrightarrow R(\nu)\#\Z/n$. Suppose that $(S,\sigma)$ is a simple object in $\C_\nu$. Let $L$ be a simple $R(\nu)$-submodule of $S$. Then by adjunction, there is a morphism in $\C_\nu$ from $\Ind_{R(\nu)}^{\C_\nu}L$ to $(S,\sigma)$, which is surjective as $(S,\sigma)$ is simple.

The module $\Ind_{R(\nu)}^{\C_\nu}(L)$ is
\[
 (L\oplus a^*L \oplus\cdots \oplus (a^*)^{n-1}L,\phi)
\]
where $\phi$ is the obvious permutation matrix.

Now suppose that $t$ is the minimal integer such that $(a^*)^tL\cong L$. Let $\sigma_0$ and $\la$ be as in the proof of Lemma \ref{equivariantstructure} and write $X(L,\la,\sigma_0)$ for the corresponding module constructed in the proof. Define $\phi\map{\Ind_{R(\nu)}^{\C_\nu}(L)}{X(L,\la,\sigma_0)}$ by
\[
 \phi|_{(a^*)^{rt+s}L}=\la^{r}\sigma_0 a^*\sigma_0\cdots (a^*)^{r-1}\sigma_0\map{(a^*)^{rt+s}L}{(a^*)^sL}.
\]
It is easy to see that this does indeed define a morphism in $\C_\nu$. As $\la$ varies amongst all possible choices, we get different nonisomorphic simple quotients of $\Ind_{R(\nu)}^{\C_\nu}(L)$. A dimension count shows that there is a direct sum decomposition
\[
 \Ind_{R(\nu)}^{\C_\nu}(L)\cong \bigoplus_\la X(L,\la,\sigma_0).
\]
Therefore $(S,\sigma)$ is one of these direct summands, completing the proof.
\end{proof}

\section{Induction and Restriction}

For $\la,\mu\in\N J$, the automorphisms $a$ of $R(\la)$ and $R(\mu)$ induce an automorphism $a$ of $R(\la)\otimes R(\mu)$ by $a(v\otimes w)=av\otimes aw$. Let $\C_{\la\sqcup\mu}$ denote the category of graded representations of $(R(\la)\otimes R(\mu))\#\Z/n$. We define $\P_{\la\sqcup\mu}$ and $\L_{\la\sqcup\mu}$ in an analogous fashion.
 
 Given $(M,\sigma)$ and $(N,\tau)$ in $\C_\la$ and $\C_\mu$ respectively, we can form the induced module
 \[
  M\circ N:= R(\la+\mu)\bigotimes_{R(\la)\otimes R(\mu)} M\otimes N.
 \]
 The isomorphisms $\sigma$ and $\tau$ induce an isomorphism $a^*M\circ a^*N\to M\circ N$. When precomposed with the natural isomorphism $a^*(M\circ N)\cong a^*M\circ a^*N$, we obtain an isomorphism
 \[
  \sigma\circ \tau: a^*(M\circ N)\to M\circ N.
 \]
The object
\[
 (M,\sigma)\circ (N,\tau):= (M\circ N,\sigma\circ \tau)
\] is an element of $\C_{\la+\mu}$.

For $\la,\mu\in \N J$, let $e_{\la\mu}$ be the image of the identity under the inclusion $R(\la)\otimes R(\mu)\to R(\la+\mu)$. Given a $R(\la+\mu)$-module $M$, its restriction is defined by
\[
 \Res_{\la,\mu} M := e_{\la\mu} M.
\] It is a $R(\la)\otimes R(\mu)$-module.

Since $e_{\la\mu}$ is invariant under $a$, there is a canonical isomorphism $a^*(\Res M)\cong \Res(a^*M)$. Thus we obtain a restriction functor from $\C_{\la+\mu}$ to $\C_{\la\sqcup\mu}$.
%

Given objects $(M,\sigma)$ and $(N,\tau)$ of $\C_\la$ and $\C_\mu$ respectively, there is a tensor product object $(M\otimes N,\sigma\otimes \tau)$ of $\C_{\la\sqcup\mu}$.

\begin{proposition}\label{4.2}
 The tensor product induces isomorphisms of $\Z[\zeta_n,q,q\inv]$-modules
\begin{align*}
 K(\P_\la)\otimes_{\Z[\zeta_n]}K(\P_\mu)&\cong K(\P_{\la\sqcup\mu}), \\
 K(\L_\la)\otimes_{\Z[\zeta_n]}K(\L_\mu)&\cong K(\L_{\la\sqcup\mu}).
\end{align*}
\end{proposition}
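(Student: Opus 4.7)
My plan is to define $\Phi$ on classes by sending $[M,\sigma] \otimes [N,\tau]$ to $[(M \otimes N, \sigma \otimes \tau)]$ via the tensor bifunctor introduced just before the proposition, verify well-definedness, and then construct an inverse using the classification of simples from Theorem \ref{classifysimples}.

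For well-definedness, the tensor bifunctor $\C_\la \times \C_\mu \to \C_{\la \sqcup \mu}$ is bi-exact and preserves compact projectives via the diagonal embedding $(R(\la)\otimes R(\mu))\#\Z/n \hookrightarrow (R(\la)\#\Z/n)\otimes(R(\mu)\#\Z/n)$, under which the larger algebra is free of rank $n$ over the smaller. Biadditivity and short-exact-sequence relations descend immediately. The scalar relation is handled by the identity $(M, \zeta_n \sigma) \otimes (N, \tau) = (M \otimes N, \zeta_n(\sigma\otimes\tau)) = (M,\sigma)\otimes(N,\zeta_n\tau)$, which yields both $\Z[\zeta_n]$-linearity and balancing over $\Z[\zeta_n]$. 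The subtlest point is that tensoring a traceless object with an arbitrary object is again traceless: if $(P,\sigma_P) = \bigoplus_{j=0}^{t-1}(a^*)^j M$ is traceless with $t\geq 2$, then the iterated composition $\tau \circ a^*\tau \circ \cdots \circ (a^*)^{t-1}\tau$ supplies an isomorphism $(a^*)^t Q \cong Q$, so $(P \otimes Q, \sigma_P \otimes \tau_Q)$ is traceless with building block $M \otimes Q$.

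To construct the inverse, I would invoke the Remark following Lemma \ref{equivariantstructure}: Theorem \ref{classifysimples} extends to $R(\la)\otimes R(\mu)$, and so every non-traceless simple of $\C_{\la \sqcup \mu}$ has underlying module of the form $L_1 \boxtimes L_2$ for simples $L_1, L_2$ of $R(\la), R(\mu)$ respectively. Given such simples equipped with equivariant structures $(L_i, \sigma_i) \in \C_\la, \C_\mu$ from Lemma \ref{equivariantstructure}, the tensor product $(L_1,\sigma_1)\otimes(L_2,\sigma_2)$ decomposes as a sum of non-traceless simples of $\C_{\la \sqcup \mu}$ up to traceless summands and unit scalars in $\Z[\zeta_n]^\times$, giving surjectivity of $\Phi$. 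Injectivity then follows by matching the $\Z[\zeta_n]$-bases of simple classes on both sides explicitly, or equivalently by using the tensor-compatible pairing of Section 3. The main obstacle I anticipate is the orbit bookkeeping when $L_1$ and $L_2$ have different $a^*$-periods $t_1, t_2$: one must carefully decompose the tensor into a piece of period $\mathrm{lcm}(t_1,t_2)$ plus genuinely traceless pieces, ensuring the isomorphism classes track correctly through the Grothendieck relations. Once the $\L$ version is established, the $\P$ version follows because in this graded setting indecomposable projectives correspond to simples via projective covers, and tensor products of projective covers yield projective covers of tensor products of simples.
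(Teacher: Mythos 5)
Your proposal is correct and follows essentially the same route as the paper's (much terser) proof: both reduce the question to the fact that, because $k$ is algebraically closed, simples and indecomposable projectives over $R(\la)\otimes R(\mu)$ are external tensor products of the corresponding objects over $R(\la)$ and $R(\mu)$, and then apply Theorem \ref{classifysimples} (extended to $R(\la)\otimes R(\mu)$ via the remark following Lemma \ref{equivariantstructure}). The details you supply — well-definedness on the traceless relation, the orbit bookkeeping, and deducing the projective case via projective covers — are precisely the steps the paper leaves implicit.
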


\begin{proof}
 Since $k$ is algebraically closed, every indecomposable projective module for $R(\la)\otimes R(\mu)$ is a tensor product of projective modules over $R(\la)$ and $R(\mu)$. Furthermore, every simple module for $R(\la)\otimes R(\mu)$ is a tensor product of simple modules over $R(\la)$ and $R(\mu)$. Theorem \ref{classifysimples} now completes the proof.
\end{proof}

\begin{corollary}
 The restriction functor from $\C_{\la+\mu}$ to $\C_{\la\sqcup\mu}$ induces a coassociative coproduct on the direct sums
\[
 \bigoplus_{\nu\in \N J}K(\P_\nu)\quad \text{and}\quad \bigoplus_{\nu\in\N J} K(\L_\nu).
\]
\end{corollary}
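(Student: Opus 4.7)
The plan is to derive the coproduct by composing restriction with the identifications of Proposition \ref{4.2}, then to reduce coassociativity to the evident associativity of iterated restriction. I would carry this out in three stages.

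First I would verify that the restriction functor $\Res_{\la,\mu}\map{\C_{\la+\mu}}{\C_{\la\sqcup\mu}}$ descends to $\Z[\zeta_n,q,q\inv]$-linear maps on both Grothendieck groups. On $\L$ this is immediate: since $e_{\la\mu}$ is an idempotent, $M\mapsto e_{\la\mu}M$ is exact and finite dimension is preserved, and the $a$-equivariance carries over because $e_{\la\mu}$ is $a$-invariant (as $a$ permutes the summands $R(\la)\otimes R(\mu)\hookrightarrow R(\la+\mu)$ by the assumption $a\la=\la$, $a\mu=\mu$, and fixes the unit). On $\P$ it suffices to note that $R(\la+\mu)$ is free of finite rank as a right $R(\la)\otimes R(\mu)$-module by the standard shuffle basis, so $e_{\la\mu}R(\la+\mu)$ is a finitely generated projective $R(\la)\otimes R(\mu)$-module; extending the equivariance, restriction carries finitely generated projective objects of $\C_{\la+\mu}$ to finitely generated projective objects of $\C_{\la\sqcup\mu}$. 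Finally one needs to check additivity on short exact sequences (for $\L$), direct sums and traceless objects (for $\P$) — all three are inherited functorially from $\Res_{\la,\mu}$ being an additive exact functor that commutes with $a^*$.

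Next, I would compose with the isomorphisms of Proposition \ref{4.2} to obtain, for each decomposition $\nu=\la+\mu$, maps
\[
 \Delta_{\la,\mu}\map{K(\P_\nu)}{K(\P_\la)\otimes_{\Z[\zeta_n]}K(\P_\mu)},
\]
and similarly for $\L$. Summing over all $\la,\mu\in\N J$ and all $\nu$ yields the desired coproduct
\[
 \Delta=\bigoplus_{\nu}\bigoplus_{\la+\mu=\nu}\Delta_{\la,\mu}\map{\bigoplus_{\nu\in \N J}K(\P_\nu)}{\Big(\bigoplus_{\nu}K(\P_\nu)\Big)\otimes\Big(\bigoplus_{\nu}K(\P_\nu)\Big)},
\]
and analogously on $\bigoplus_\nu K(\L_\nu)$.

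Finally, coassociativity reduces to a statement about threefold restrictions. For $\la,\mu,\nu\in\N J$, let $e_{\la,\mu,\nu}$ be the image of the unit under $R(\la)\otimes R(\mu)\otimes R(\nu)\hookrightarrow R(\la+\mu+\nu)$, and set $\Res_{\la,\mu,\nu}M=e_{\la,\mu,\nu}M$. The identities $e_{\la,\mu,\nu}=e_{\la,\mu}\cdot e_{\la+\mu,\nu}=e_{\la,\mu+\nu}\cdot e_{\mu,\nu}$ give natural isomorphisms of functors
\[
 (\Res_{\la,\mu}\otimes\id)\circ\Res_{\la+\mu,\nu}\;\cong\;\Res_{\la,\mu,\nu}\;\cong\;(\id\otimes\Res_{\mu,\nu})\circ\Res_{\la,\mu+\nu},
\]
compatible with the $a$-equivariant structures (again because all three idempotents are $a$-invariant). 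Passing to Grothendieck groups and applying Proposition \ref{4.2} to identify $K(\P_{\la\sqcup\mu\sqcup\nu})$ with $K(\P_\la)\otimes K(\P_\mu)\otimes K(\P_\nu)$ yields $(\Delta\otimes\id)\circ\Delta=(\id\otimes\Delta)\circ\Delta$. The only nontrivial point to be careful about is that the two factorisations of $\Res_{\la,\mu,\nu}$ intertwine the twist isomorphisms $\sigma$ in the same way — this is automatic from the construction of $\sigma\circ\tau$ on induced/restricted modules, since it is built from $a^*$ applied componentwise. I expect this bookkeeping of equivariant structures in the threefold decomposition to be the only nonroutine step; everything else is a formal consequence of the previous results.
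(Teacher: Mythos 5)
Your argument is correct and is the natural one; the paper states this as a corollary with no proof, and your three-step reduction (descend restriction to Grothendieck groups, compose with Proposition \ref{4.2}, reduce coassociativity to the idempotent identity $e_{\la,\mu,\nu}=(e_{\la,\mu}\otimes 1)\, e_{\la+\mu,\nu}=(1\otimes e_{\mu,\nu})\, e_{\la,\mu+\nu}$) is exactly what is intended. One small correction: to conclude that $\Res$ preserves finitely generated projectives you need $e_{\la\mu}R(\la+\mu)$ to be projective as a \emph{left} $R(\la)\otimes R(\mu)$-module, which follows from the shuffle basis on that side rather than from the right-module freeness you invoke; the paper itself deduces preservation of projectives a bit later from exactness of the right adjoint, coinduction.
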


\begin{theorem}
 The functors of induction and restriction form an adjoint pair of exact functors between $\C_{\la\sqcup\mu}$ and $\C_{\la+\mu}$.
\end{theorem}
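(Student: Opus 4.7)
The plan is to realize induction and restriction in $\C_\nu$ as the usual induction and restriction for an inclusion of smash product algebras, and then invoke the classical theory. By Lemma \ref{lemma:easy}, $\C_\nu$ is the module category for $R(\nu) \# \Z/n$, and similarly $\C_{\la \sqcup \mu}$ is the module category for $(R(\la) \otimes R(\mu)) \# \Z/n$. Because the inclusion $R(\la) \otimes R(\mu) \hookrightarrow R(\la+\mu)$ intertwines the two $\Z/n$-actions, it extends to an inclusion of algebras
\[
 (R(\la) \otimes R(\mu)) \# \Z/n \hookrightarrow R(\la+\mu) \# \Z/n.
\]

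The first step is to check that the induction functor $\circ$ defined in the excerpt agrees with induction $(R(\la+\mu) \# \Z/n) \otimes_{(R(\la) \otimes R(\mu)) \# \Z/n} (-)$ along the above inclusion, and that the restriction functor $e_{\la\mu} \cdot (-)$ together with the canonical isomorphism $a^*(\Res M) \cong \Res(a^*M)$ agrees with the corresponding smash product restriction. The underlying $R(\nu)$-module identifications are the standard ones for KLR algebras; the content is that the equivariant structure $\sigma \circ \tau$ on $M \circ N$ and the canonical identification on $\Res$ coincide with those arising from the smash product description. This is a direct unpacking of definitions that uses the $a$-invariance of $e_{\la\mu}$.

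Once this identification is in place, adjointness of induction and restriction is the classical tensor-Hom adjunction for the displayed inclusion of algebras. Exactness of restriction follows because $e_{\la\mu}$ remains an idempotent in $R(\la+\mu) \# \Z/n$, so restriction is projection onto a direct summand at the level of vector spaces. Exactness of induction follows from the freeness of $R(\la+\mu)$ as a right $R(\la) \otimes R(\mu)$-module established by Khovanov and Lauda via shuffle basis arguments; since $\Z/n$ acts by algebra automorphisms preserving the subalgebra, we have $R(\la+\mu) \# \Z/n \cong R(\la+\mu) \otimes_k k[\Z/n]$ and consequently this smash product is free as a right $(R(\la) \otimes R(\mu)) \# \Z/n$-module.

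The main obstacle is the first step: carefully matching the equivariant structure $\sigma \circ \tau$ on $M \circ N$ with that induced from the smash product construction, and similarly for restriction. Beyond this verification the argument is formal.
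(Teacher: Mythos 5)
Your proposal is correct and essentially the same strategy as the paper's: both recognize $\C_\nu$ as the module category of the smash product (Lemma \ref{lemma:easy}), derive adjointness from tensor-Hom, and reduce exactness of induction to the Khovanov--Lauda freeness result. The small difference is in how exactness of induction is justified. The paper takes a shortcut: since a sequence in $\C_{\la\sqcup\mu}$ is exact if and only if the underlying sequence of $R(\la)\otimes R(\mu)$-modules is exact, exactness in the folded setting follows immediately from the unfolded case \cite[Proposition~2.16]{khovanovlauda}. You instead upgrade the freeness statement to the smash-product level, arguing that $R(\la+\mu)\#\Z/n$ is free as a right $(R(\la)\otimes R(\mu))\#\Z/n$-module. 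That claim is true (a $B$-basis $\{a_\alpha\}$ of $A$ gives the $B\#G$-basis $\{a_\alpha\#1\}$ of $A\#G$ whenever $B\subseteq A$ is $G$-stable), but the way you state it --- inferring it from the vector-space isomorphism $A\#G\cong A\otimes_k k[G]$ --- is not quite a proof; you should check directly that $\{a_\alpha\#1\}$ spans and is independent over $B\#G$. The step you flag as the main obstacle (matching the equivariant structure $\sigma\circ\tau$ and the canonical identification on restriction with those from the smash-product induction/restriction) is indeed routine: it is the standard isomorphism $(C\#G)\otimes_{A\#G}V\cong C\otimes_A V$ for a $G$-stable subalgebra $A\subseteq C$, and the idempotent $e_{\la\mu}$ being $a$-invariant handles the restriction side. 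The paper's route is shorter; yours makes the smash-product mechanics more explicit.
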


\begin{proof}
The adjunction follows from the usual tensor-hom adjunction.
 Exactness follows from exactness in the situation where there is no automorphism $a$. For restriction, this is obvious and for induction, exactness follows from \cite[Proposition 2.16]{khovanovlauda}.
\end{proof}

\begin{definition}
 A functor is said to be traceless if its image lies in the full subcategory of traceless objects.
\end{definition}

We refer to the filtration appearing in the below theorem as the \emph{Mackey filtration}. This important Theorem in the unfolded case is \cite[Proposition 2.18]{khovanovlauda} 

\begin{theorem}\label{mackey}
Let $\la_1,\ldots,\la_k,\mu_1\ldots,\mu_l\in\N J$ be such that $\sum_i \la_i=\sum_j \mu_j$. Consider the exact functor
$$ \Res_{\mu_1,\ldots,\mu_l}\circ\Ind_{\la_1,\ldots,\la_k}
$$ from $\C_{\la_1\sqcup\cdots\sqcup\la_k}$ to $\C_{\mu_1\sqcup\cdots\sqcup\mu_l}$ This composite functor has a natural filtration by exact functors. The subquotient functors in this filtration which are not traceless are indexed by tuples $\nu_{ij}$ satisfying $\la_i=\sum_j \nu_{ij}$ and $\mu_j=\sum_i\nu_{ij}$,
and are isomorphic, up to a grading shift, to the composition $$\Ind_\nu^\mu\circ \tau\circ \Res_\nu^\la.$$ Here
$\Res_{\nu}^\la\map{\otimes_i R(\la_i)\mods}{\otimes_i(\otimes_jR(\nu_{ij}))\mods}$ is the tensor product of the $\Res_{\nu_{i\bullet}}$, $\tau\map{\otimes_i(\otimes_jR(\nu_{ij}))\mods}{\otimes_j(\otimes_iR(\nu_{ij}))\mods}$ is given by permuting the tensor factors and $\Ind_\nu^\mu\map{\otimes_j(\otimes_iR(\nu_{ij}))\mods}{\otimes_j R(\mu_j)\mods}$ is the tensor product of the $\Ind_{\nu_{\bullet i}}$.
\end{theorem}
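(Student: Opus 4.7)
The plan is to reduce to the unfolded Mackey filtration of \cite[Proposition 2.18]{khovanovlauda} and then equivariantize under the action of the diagram automorphism $a$. First, apply the Khovanov--Lauda theorem to the composite $\Res_{\mu_1,\ldots,\mu_l}\circ\Ind_{\la_1,\ldots,\la_k}$, regarded as a functor between the underlying $R(\bullet)$-module categories (ignoring $a$-equivariance for the moment). This produces a natural filtration whose subquotients are indexed by matrices $\nu=(\nu_{ij})$ with entries in $\N I$ satisfying $\la_i=\sum_j\nu_{ij}$ and $\mu_j=\sum_i\nu_{ij}$, each subquotient being isomorphic, up to a grading shift, to $\Ind_\nu^\mu\circ\tau\circ\Res_\nu^\la$.

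Next, I would lift this filtration to the folded setting. Since each $\la_i$ and $\mu_j$ lies in $\N J$, the idempotents $e_{\la_1,\ldots,\la_k}$ and $e_{\mu_1,\ldots,\mu_l}$ are $a$-invariant, so induction and restriction descend to exact functors between the corresponding $\C$-categories. The Khovanov--Lauda filtration of $\Res\circ\Ind$ comes from a filtration of the bimodule representing this functor, indexed by double coset representatives in the symmetric group, and this construction is manifestly compatible with $a$. The induced action permutes the subquotients according to the rule $(a\cdot\nu)_{ij}=a\nu_{ij}$ on the indexing set of matrices, so I may refine (or regroup) the filtration so that each step consists of the terms of a single $a$-orbit of matrices.

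If $\nu$ is fixed by $a$, then necessarily $\nu_{ij}\in\N J$ for all $i,j$; the inherited equivariant structure makes the corresponding subquotient an object of $\C_{\mu_1\sqcup\cdots\sqcup\mu_l}$, and unwinding the canonical identifications matches it, up to the same grading shift, with $\Ind_\nu^\mu\circ\tau\circ\Res_\nu^\la$ computed inside the $\C$-categories. If instead $\nu$ lies in an $a$-orbit of length $t\geq 2$ (which necessarily divides $n$), then the orbit sum of the corresponding Khovanov--Lauda subquotients inherits the cyclic rotation equivariant structure, which is exactly the definition of a traceless object. The main technical obstacle is the identification in the $a$-fixed case: one must verify that the equivariant structure inherited from the bimodule filtration agrees with the equivariant structure on $\Ind_\nu^\mu\circ\tau\circ\Res_\nu^\la$ assembled from the canonical isomorphisms $a^*(M\circ N)\cong a^*M\circ a^*N$ and from the tensor-factor swap implicit in the functor $\tau$. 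This is a diagram chase through the definitions, and the most delicate step is confirming that the tensor permutation intertwines the $a$-actions on its source and target as required.
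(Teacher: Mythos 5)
Your proposal takes essentially the same route as the paper: reduce to the Khovanov--Lauda Mackey filtration, observe that the $a$-action on the bimodule permutes the subquotients according to $\eta_\bullet\mapsto a\eta_\bullet$, regroup by $a$-orbits, identify the $a$-fixed subquotients (whose entries necessarily lie in $\N J$) with $\Ind_\nu^\mu\circ\tau\circ\Res_\nu^\la$, and recognize the orbit sums of length $\geq 2$ as traceless. You are slightly more explicit than the paper about the diagram chase needed to match equivariant structures in the $a$-fixed case, but the argument is the same.
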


\begin{proof}
For simplicity we give the proof for the case where $k=l=2$, from which the general case follows. Write $B$ for the $(R(\mu_1)\otimes R(\mu_2),R(\la_1)\otimes R(\la_2))$-bimodule $R(\la_1+\la_2)$. Then tensoring with $B$ is the usual composition $\Res_{\mu_1,\mu_2}\otimes \Ind_{\la_1,\la_2}$ without the automorphism $a$ considered. We now describe the filtration of $B$ as in the proof of \cite[Proposition 2.18]{khovanovlauda}.

Let  $ \eta_\bullet=(\eta_{11},\eta_{12},\eta_{21},\eta_{22})$ be a quadruple of elements in $\N I$ such that $\eta_{i1}+\eta_{i2}=\la_i$ and $\eta_{1j}+\eta_{2j}=\mu_j$ for $i,j\in\{1,2\}$. Let $w(\eta_\bullet)$ be the involutive permutation where $w(i)=i$ if $i\leq |\eta_{11}|$ or $i>|\la_1+\eta_{21}|$ and $w(i)=i+|\eta_{12}|$ if $|\eta_{11}|<i\leq |\la_1|$. Define the element $u(\eta_\bullet)=\tau_{w(\eta_\bullet)}\in R(\la_1+\la_2)$.

Then $B$ is generated as a bimodule by the elements $u(\eta_\bullet)$. Define a partial order on the set of all quadruples $\eta_\bullet$ by $\eta_\bullet\geq\eta_\bullet'$ if $\eta_{21}-\eta_{21}'\in \N I$. Let $B(\leq\eta_\bullet)$, respectively $B(<\eta_\bullet)$ be the subbimodule of $B$ generated by all $u(\zeta_\bullet)$ with $\zeta_\bullet\leq \eta_\bullet$, respectively $\zeta_\bullet<\eta_\bullet$. Let $B(\eta_\bullet)=B(\leq\eta_\bullet)/B(<\eta_\bullet)$.

The functor of tensoring with the subquotient $B(\eta_\bullet)$ is the composition $\Ind_\nu^\mu\circ \tau\circ \Res_\nu^\la$. In particular it is exact, so we indeed have a filtration by exact subfunctors. We now need to understand how this filtration interacts with the automorphism $a$.

If $a\eta_\bullet=\eta_\bullet$, then the functor $B(\eta_\bullet)\otimes -$ coincides with the composite functor $\Ind_\nu^\mu\circ \tau\circ \Res_\nu^\la$ from $\C_{\la_1\sqcup\la_2}$ to $C_{\mu_1\sqcup\mu_2}$.

If $a\eta_\bullet\neq\eta_\bullet$, consider $B(\eta_\bullet)\oplus B(a\eta_\bullet)\oplus\cdots B(a^{t-1}\eta_\bullet)$ where $t$ is the minimal positive integer such that $a^t\eta_\bullet=\eta_\bullet$. Grouping the subquotients in orbits in this manner produces a subquotient functor which is tensoring with the direct sum bimodule. Furthermore, this subquotient functor is traceless, because the $a$-action comes from the $a$-action on the bimodule which is permuting the summands.
\end{proof}

\begin{theorem}
The induction and restriction functors send projective objects to projective objects.
\end{theorem}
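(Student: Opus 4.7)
The strategy is to reduce everything to the unfolded statement \cite[Proposition 2.16]{khovanovlauda} via the following bridging lemma: an object $(M,\sigma)$ of $\C_\nu$ is projective if and only if its underlying $R(\nu)$-module $M$ is projective. Given this lemma, both halves of the theorem are immediate.

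To prove the bridging lemma, I would first record that $R(\nu)\#\Z/n$ is free of rank $n$ on both sides as an $R(\nu)$-module, with basis $1,a,a^2,\ldots,a^{n-1}$. The forward direction is then formal: the forgetful functor $\C_\nu\to R(\nu)\mbox{-mod}$ sends free $R(\nu)\#\Z/n$-modules to free $R(\nu)$-modules, hence carries projectives to projectives. For the reverse direction, suppose $(M,\sigma)\in\C_\nu$ has projective underlying $R(\nu)$-module. Because $R(\nu)\hookrightarrow R(\nu)\#\Z/n$ is a free extension, the induction $R(\nu)\#\Z/n\otimes_{R(\nu)}M$ is projective in $\C_\nu$, and the Maschke-style averaging map
\begin{equation*}
m\longmapsto \frac{1}{n}\sum_{i=0}^{n-1}a^i\otimes a^{-i}m
\end{equation*}
defines a $\C_\nu$-splitting of the canonical surjection $R(\nu)\#\Z/n\otimes_{R(\nu)}M\twoheadrightarrow M$. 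This is the unique step where the running hypothesis that $\mathrm{char}(k)$ does not divide $n$ is used: it is exactly what allows $1/n$ to exist in $k$.

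With the lemma in hand, the two halves of the theorem unfold in parallel. For induction, if $Q\in\C_{\la\sqcup\mu}$ is projective then $Q$ is projective over $R(\la)\otimes R(\mu)$; Khovanov-Lauda's Proposition 2.16 gives freeness of $R(\la+\mu)$ as a right $R(\la)\otimes R(\mu)$-module, so $\Ind Q = R(\la+\mu)\otimes_{R(\la)\otimes R(\mu)}Q$ is projective over $R(\la+\mu)$, and the lemma applied at $\nu=\la+\mu$ concludes. (Equivalently, the preceding theorem exhibits $\Ind$ as the left adjoint of an exact functor, which abstractly forces preservation of projectives.) For restriction, the same Proposition 2.16 asserts that $e_{\la\mu}R(\la+\mu)$ is free as a left $R(\la)\otimes R(\mu)$-module, so $\Res P = e_{\la\mu}P$ is projective over $R(\la)\otimes R(\mu)$ whenever $P\in\C_{\la+\mu}$ is projective; the lemma applied at $\la\sqcup\mu$ then finishes.

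The only step requiring genuine thought is the reverse direction of the bridging lemma, but this is just the standard Maschke averaging trick for smash products with a finite group of order invertible in the base field, so no real obstacle should arise.
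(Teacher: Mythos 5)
Your proof is correct, but it takes a genuinely different route from the paper. The paper handles both functors in one stroke via the principle that a functor with an exact right adjoint preserves projectives: the right adjoint of $\Ind$ is $\Res$ (evidently exact), and the right adjoint of $\Res$ is coinduction, whose exactness is deduced from the freeness of $R(\la+\mu)$ over $R(\la)\otimes R(\mu)$, quoting a folded generalisation of the coinduction functor of Lauda--Vazirani. Your parenthetical remark about $\Ind$ being left adjoint to the exact $\Res$ reproduces half of that argument, but for $\Res$ you substitute something quite different: the bridging lemma that an object of $\C_\nu$ is projective if and only if its underlying $R(\nu)$-module is, proved by the forgetful/free-rank-$n$ observation in one direction and the Maschke averaging $m\mapsto \frac{1}{n}\sum_{i}a^i\otimes a^{-i}m$ in the other, followed by reduction to the unfolded freeness statements from Khovanov--Lauda. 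Your approach has the virtue of isolating the bridging lemma as a reusable statement and making transparent exactly where the running hypothesis $\operatorname{char}(k)\nmid n$ enters, whereas the paper's argument is more compact and does not visibly invoke the characteristic hypothesis at this point, instead deferring the only nontrivial exactness to an equivariant coinduction functor whose construction it does not spell out. Both are sound; your version arguably fills in more detail at the cost of a slightly longer route.
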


\begin{proof}
The induction and restriction functors have right adjoints restriction and coinduction respectively. This coinduction functor is discussed in \cite[\S 2.3]{laudavazirani} in the unfolded case with a straightforward generalisation to the folded case. The restriction functor is clearly exact and the coinduction functor is exact since $R(\la+\mu)$ is a free $R(\la)\otimes R(\mu)$-module. The theorem follows since a  functor sends projectives to projectives if it has an exact right adjoint.
\end{proof}

\section{Duality}

Let $P$ be a finitely generated projective $R(\nu)$-module. Then we can define a dual module $$\D P=\Hom(P,R(\nu)).$$

This is the direct sum of all homogeneous homomorphisms between $P$ and $R(\nu)$, not necessarily of zero degree. It is a graded $k$-vector space where $(\D P)_m= \Hom(q^mP,R(\nu))_0$, where we are referring to degree zero homomorphisms only.

This is also a $R(\nu)$-module, where the action of $R(\nu)$ is by
\begin{equation}\label{dualaction}
 r(\la)(m)=\la(\psi(r)m)
\end{equation}
for all $r\in R(\nu)$, $\la\in\D(P)$ and $m\in P$.

Let $M$ be a finite dimensional $R(\nu)$-module. Then its dual $\D(M):=\Hom_k(M,k)$ is also an $R(\nu)$-module by
the same formula (\ref{dualaction}) for all $r\in R(\nu)$, $\la\in\D(M)$ and $m\in M$.
The module $\D M$ is also naturally graded.

For any morphism $f\map{M}{N}$ between finitely generated projective, or finite dimensional $R(\nu)$-modules, there is then an induced morphism $\D f\map{\D N}{\D M}$.

Now we define the dual of an object $(M,\sigma)$ in $\L_\nu$ or $\P_\nu$ by the formula
\[
 \D(M,\sigma)=(\D M,(\D\sigma)\inv)
\]
where the appropriate duality is taken depending on which category we are in. It is clear that $\D$ is a contravariant functor from $\L_\nu$ to itself. To show that $\D$ sends $\P_\nu$ to $\P_\nu$, since projective modules are direct summands of free modules, it suffices to show that dual of a free $R(\nu)\#\Z/n$-module is free. The free $R(\nu)\#\Z/n$-module of rank one is
\[
 (R(\nu)\oplus a^*R(\nu)\oplus \cdots\oplus (a^*)^{n-1}R(\nu),\phi)
\]
where, $\phi$ is the obvious permutation matrix. It is straightforward to compute that this is isomorphic to its dual.

An object $M$ of $\L_\nu$ or $P$ of $\P_\nu$ is said to be self-dual if there is an isomorphism $\D M\cong M$ in $\L_\nu$, respectively $\D P\cong P$ in $\P_\nu$.

\begin{lemma}\label{lem:dualproj}
 If $P$ and $Q$ are projective then $\D(P\circ Q)\cong \D P\circ \D Q$.
\end{lemma}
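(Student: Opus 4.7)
The plan is to construct the isomorphism first as an isomorphism of plain $R(\la+\mu)$-modules, and then verify that it intertwines the $\sigma$-structures so that it lifts to an isomorphism in $\C_{\la+\mu}$.

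For the module-level statement, I would unpack $P \circ Q$ as $R(\la+\mu) e_{\la\mu} \otimes_{R(\la) \otimes R(\mu)} (P \otimes Q)$ and apply tensor-hom adjunction to obtain
\[
\D(P \circ Q) \cong \Hom_{R(\la) \otimes R(\mu)}\bigl(P \otimes Q,\, e_{\la\mu} R(\la+\mu)\bigr).
\]
The antiautomorphism $\psi$ fixes every $e_\ii$ and hence $e_{\la\mu}$, and it restricts on the subalgebra $R(\la) \otimes R(\mu)$ to the tensor product of the analogous antiautomorphisms on each factor. Using $\psi$ to convert the right $R(\la+\mu)$-module $e_{\la\mu}R(\la+\mu)$ into a left module isomorphic to $R(\la+\mu) e_{\la\mu}$, and redistributing the $\Hom$ across $P \otimes Q$ (valid because $P$ and $Q$ are finitely generated projective), the right-hand side rearranges to $R(\la+\mu) e_{\la\mu} \otimes_{R(\la) \otimes R(\mu)} (\D P \otimes \D Q) = \D P \circ \D Q$. (Conceptually, this is just the statement that duality swaps the left adjoint $\Ind$ with the right adjoint $\Coind$, combined with the fact that for KLR-type algebras $\Coind$ agrees with $\Ind$ because $R(\la+\mu)$ is self-dual as an $(R(\la+\mu), R(\la)\otimes R(\mu))$-bimodule.)

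To upgrade to an isomorphism in $\C_{\la+\mu}$, I observe that every ingredient above (tensor-hom adjunction, the self-duality of the bimodule $R(\la+\mu) e_{\la\mu}$, and the antiautomorphism $\psi$) is natural and compatible with the diagram automorphism $a$; indeed $a$ commutes with $\psi$ because both are determined by their action on the generators $e_\ii, y_j, \tau_k$, on which they commute. Therefore the composite isomorphism intertwines the induced equivariant structure $(\D\sigma)^{-1} \circ (\D\tau)^{-1}$ on $\D P \circ \D Q$ with $\D(\sigma \circ \tau)^{-1}$ on $\D(P \circ Q)$, producing the desired morphism in $\C_{\la+\mu}$.

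The main obstacle is the equivariance bookkeeping: each step of the chain of natural isomorphisms introduces an $a^*$-twist, and verifying that these twists compose correctly to match the prescribed $\sigma$-structures on both sides is a diagram chase easy to botch. A clean way to circumvent it is to note that $\psi$ commutes with $a$, so it extends to an antiautomorphism of $R(\nu)\#\Z/n$, and to run the whole argument inside the category of $R(\nu)\#\Z/n$-modules via Lemma \ref{lemma:easy}, so that the equivariant data is carried along automatically by the algebra structure rather than having to be tracked by hand.
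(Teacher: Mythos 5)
Your proof is correct, and it takes a genuinely different route from the paper's. The paper's proof is a two-line reduction: since projectives are summands of free modules, reduce to the free case, and then observe that the relevant computation is the same as the earlier one showing that $\D$ sends $\P_\nu$ to $\P_\nu$ (an explicit check that the rank-one free $R(\nu)\#\Z/n$-module is self-dual via a permutation matrix). Your argument instead packages the computation categorically: tensor-hom adjunction applied to $P\circ Q = R(\la+\mu)e_{\la\mu}\otimes_{R(\la)\otimes R(\mu)}(P\otimes Q)$ gives $\D(P\circ Q)\cong\Hom_{R(\la)\otimes R(\mu)}(P\otimes Q, e_{\la\mu}R(\la+\mu))$, the finite-generation-and-projectivity of $P\otimes Q$ lets you pull out $\D P\otimes\D Q$ via the standard evaluation isomorphism $\Hom_A(P,N)\cong\Hom_A(P,A)\otimes_A N$, and the $\psi$-twist (using $\psi(e_{\la\mu})=e_{\la\mu}$ and $\psi^2=\mathrm{id}$) converts the resulting right $R(\la+\mu)$-module back to the left module $\D P\circ\D Q$. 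What your route buys is a conceptual explanation — $\D$ is a contravariant equivalence and so sends the left adjoint $\Ind$ to the right adjoint $\Coind$, and $\Coind$ coincides with $\Ind$ because the bimodule $R(\la+\mu)e_{\la\mu}$ is self-dual — and a slicker handling of the $a$-equivariance by extending $\psi$ to the smash product and working there directly, so the $\sigma$-bookkeeping is absorbed into the algebra structure. What it costs is that both the evaluation isomorphism and the bimodule self-duality are themselves usually proved by reduction to the free case, so the two proofs rest on the same underlying computation; and when invoking $\Coind\cong\Ind$ one should check that no net grading shift sneaks in. One small caution: the paper's formula $(r\la)(m)=\la(\psi(r)m)$ as literally written is the finite-dimensional-module formula; for $\D P=\Hom_{R(\nu)}(P,R(\nu))$ the intended structure is the $\psi$-twist of the natural \emph{right} $R(\nu)$-module structure $(\la r)(m)=\la(m)r$, which is what your argument implicitly uses and what makes the $\psi$-twist step well-defined, so you are on the right interpretation.
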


\begin{proof}
Since projective modules are all direct summands of free modules, 
it suffices to consider the case where $P$ and $Q$ are free. This computation is similar to the one which shows that $\D$ sends $\P_\nu$ to $\P_\nu$.
\end{proof}


%
%

\section{The Statement of the Categorification Theorems}

The algebra $\f_{\Q(q)}$ is the $\Q(q)$-algebra as defined in \cite{lusztigbook} generated by elements $\{\th_j\mid j\in J\}$. Lusztig defines it as the quotient of a free algebra by the radical of a bilinear form. By the quantum Gabber-Kac theorem, it has a presentation as the quotient of a free algebra by the quantum Serre relations.
Morally, $\f_{\Q(q)}$ should be thought of as the positive part of the quantised enveloping algebra $U_q(\g)$.
There is only a slight difference in the coproduct, necessary as the coproduct of $U_q(\g)$ does not map $U_q(\g)^+$ into $U_q(\g)^+\otimes U_q(\g)^+$.

There is a $\Z[q,q\inv]$-form of $\f_{\Q(q)}$, which we denote simply by $\f$. It is the $\Z[q,q\inv]$-subalgebra of $\f_{\Q(q)}$ generated by the divided powers $\th_j^{(n)}=\th_j^n/[n]_{q_j}!$, where $[n]_q!=\prod_{i=1}^n(q^i-q^{-i})/(q-q\inv)$ is the $q$-factorial and $q_j=q^{d_j}$. 

The algebra $\f$ is graded by $\N J$ where $\th_j$ has degree $j$ for all $j\in J$.
We write $\f=\oplus_{\nu\in\N J} \f_\nu$ for its decomposition into graded components.
Of significant importance for us is the dimension formula from \cite[Theorem 33.1.3]{lusztigbook}.
\begin{equation*}
\sum_{\nu\in \N J}\dim \f_\nu t^\nu = \prod_{\a\in\Phi^+} (1-t^{\a})^{-\operatorname{mult}(\a)}.
\end{equation*}
where $\Phi^+$ is the set of positive roots in the root system defined by $(J,\cdot)$.

The tensor product $\f\otimes \f$ has an algebra structure given by
\begin{equation}\label{twistedproduct}
 (x_1\otimes y_1)(x_2\otimes y_2)=q^{\b_1\cdot\a_2}x_1 x_2 \otimes y_1 y_2
\end{equation}
 where $y_1$ and $x_2$ are homogeneous of degree $\b_1$ and $\a_2$ respectively.

Given any bilinear form $(\cdot,\cdot)$ on $\f$, we obtain a bilinear form $(\cdot,\cdot)$ on $\f\otimes \f$ by
\[
 (x_1 \otimes x_2,y_1\otimes y_2)=(x_1,x_2)(y_1,y_2).
\]

There is a unique algebra homomorphism $r\map{\f}{\f\otimes \f}$ such that $r(\th_j)=\th_j\otimes 1 + 1 \otimes \th_j$ for all $j\in J$. For each $x\in\f_\nu$, define $_jr(x)$ and $r_j(x)$ where $r(x)={_j}r(x)\otimes \th_j$ plus terms in other bidegrees and $r(x)=\th_j\otimes r_j(x)$ plus terms in other bidegrees.

The algebra $\f$ has a symmetric bilinear form $\langle\cdot,\cdot\rangle$ satisfying
\begin{eqnarray*}
 \langle\th_j,\th_j\rangle&=&(1-q^{j\cdot j})^{-1} \\
\langle x,yz\rangle &=& \langle r(x),y\otimes z\rangle.
\end{eqnarray*}

The form $\langle \cdot,\cdot\rangle$ is nondegenerate. Indeed, in the definition of $\f$ in \cite{lusztigbook}, $\f_{\Q(q)}$ is defined to be the quotient of a free algebra by the radical of this bilinear form. It is known that $\f$ is a free $\Z[q,q\inv]$-module. Let $\f^*$ be the graded dual of $\f$ with respect to $\langle \cdot,\cdot\rangle$. By definition, $\f^*=\oplus_{\nu\in\N I} \f_\nu^*$. As twisted bialgebras over $\Q(q)$, $\f_{\Q(q)}$ and $\f^*_{\Q(q)}$ are isomorphic, though there is no such isomorphism between their integral forms.

There is a bar involution on $\f$ which is the algebra automorphism of $\f$ sending $q$ to $q\inv$ and fixing the generators $\th_j$. It induces a bar involution on $\f^*$ by $\overline{\la}(x)=\la(\overline{x})$ for all $\la\in\f^*$ and $x\in \f$.

Let $A$ denote the smallest subring of $\Z[\zeta_n]$ such that the structure constants for multiplication in $K(\P)$, with respect to the self-dual indecomposable projectives, lie in $A[q,q\inv]$. We show later in Lemma \ref{mstructureconstants} that $A\subset \Z[\zeta_n+\zeta_n\inv]$ and in the most favourable circumstances, $A=\Z$. This implies that if $A\not\cong \Z$ then $n\geq 5$ and it is known that every irreducible Cartan datum of finite or affine type can be obtained via a folding with $n<5$.

Let $\kk_\nu$ denote the $A[q,q\inv]$-span of the classes of self-dual indecomposable projectives inside $K(\P_\nu)$. Similarly, let $\kk_\nu^*$ denote the $A[q,q\inv]$-span of the classes of self-dual simple modules inside $K(\L_\nu)$. Write $\kk=\oplus_{\nu\in \N J}\kk_\nu$ and $\kk^*=\oplus_{\nu\in\N J}\kk_\nu^*$ for the decomposition of $\kk$ and $\kk^*$ into their graded pieces.

We are now able to state the main categorification theorems of this paper. They will be proved at the end of Section \ref{proof}.

\begin{theorem}\label{projiso}
There is a unique $A[q,q\inv]$-linear grading preserving isomorphism $\ga\map{\f\otimes A}{\kk}$ such that
\begin{enumerate}
 \item $\ga(\th_j^{(m)})=[P_j^{(m)}]$ for all $j\in J$ and $m\in \N$.
\item Under the isomorphism $\ga$, the multiplication $\f_\la\otimes \f_\mu\to\f_{\la+\mu}$ corresponds to the product on $\kk$ induced by $\Ind_{\la,\mu}$.
\item Under the the isomorphism $\ga$, the comultiplication $\f_{\la+\mu}\to \f_\la\otimes \f_\mu$ corresponds to the  coproduct on $\kk$ induced by $\Res_{\la,\mu}$.
\item Under the isomorphism $\ga$, the bar involution on $\f$ corresponds to the anti-linear involutive automorphism on $\kk$ induced by the duality $\D$.
\item The isomorphism $\ga$ intertwines the pairings $\langle\cdot,\cdot\rangle$ and $(\cdot,\cdot)$ defined on $\f$ and $\kk$ respectively.
\end{enumerate}
 \end{theorem}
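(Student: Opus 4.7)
The plan is to adapt the Khovanov--Lauda categorification strategy to the folded setting, in four steps.

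Step 1 (construction of the map): For each $j\in J$ define a self-dual indecomposable projective $P_j\in\P_j$ by equipping the natural projective $R(j)$-module with its canonical $\Z/n$-equivariant structure furnished by Lemma \ref{equivariantstructure}, and define $P_j^{(m)}\in\P_{mj}$ as the summand of $P_j^{\circ m}$ cut out by an $a$-invariant nil-Hecke-type idempotent. Using the algebra structure on $\bigoplus_\nu K(\P_\nu)$ induced by induction (Proposition \ref{4.2}), the assignment $\th_j\mapsto[P_j]$ extends uniquely to a $\Z[q,q\inv]$-algebra homomorphism $\widetilde\ga$ from the free algebra on $\{\th_j:j\in J\}$ into $\kk$, with $\widetilde\ga(\th_j^{(m)})=[P_j^{(m)}]$.

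Step 2 (isometry): Prove that $\widetilde\ga$ intertwines the bilinear forms. The identity $\langle x,yz\rangle=\langle r(x),y\otimes z\rangle$ together with the base case $\langle\th_j,\th_j\rangle=(1-q^{j\cdot j})\inv$ characterizes $\langle\cdot,\cdot\rangle$ inductively on the $\f$-side. The matching recursion on the $\kk$-side is supplied by induction--restriction adjunction combined with the Mackey filtration of Theorem \ref{mackey}; traceless subquotients drop out of \eqref{projpairing} by construction. The base case $([P_j],[P_j])=(1-q^{j\cdot j})\inv$ is a direct computation of $P_j^\psi\otimes_{R(j)} P_j$ together with the trace of $a$ on its graded pieces.

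Step 3 (descent and bijectivity): The isometry from Step 2 forces $\widetilde\ga$ to kill the radical of $\langle\cdot,\cdot\rangle$, so it descends through Lusztig's defining surjection onto $\f$ and yields $\ga\map{\f}{\kk}$. Surjectivity: every self-dual indecomposable projective in $\P_\nu$ appears, up to grading shift, as a summand of an induction of $P_j^{(m)}$'s, a fact secured by Theorem \ref{crystal}, which through the Kashiwara--Saito characterization identifies the self-dual simples (the dual basis to self-dual projectives under $\langle\cdot,\cdot\rangle$) with the crystal $B(\infty)$ and so exhausts the Grothendieck group. Injectivity then follows from the isometry together with nondegeneracy of $\langle\cdot,\cdot\rangle$ on $\f$, or equivalently by matching ranks via the dimension formula for $\f_\nu$ recalled in the excerpt.

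Step 4 (remaining compatibilities, and the main obstacle): The coproduct compatibility (item 3) reduces to the same Mackey calculation used in Step 2. The bar-involution compatibility (item 4) is formal from $\D P_j\cong P_j$ together with Lemma \ref{lem:dualproj}. The main obstacle is Step 2: verifying the pairing requires tracking the trace of $a$ on each Mackey subquotient and confirming that traceless pieces vanish in \eqref{projpairing}, which is the one place where folding introduces genuinely new bookkeeping beyond the unfolded Khovanov--Lauda argument.
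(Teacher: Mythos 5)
Your plan matches the paper's strategy closely: extend $\th_j\mapsto[P_j]$ to a homomorphism out of an integral form of the free algebra, prove it is an isometry using Lemma~\ref{7.1} together with the Mackey filtration and adjunction, descend through Lusztig's defining quotient of ${}'\f$ by the radical of $\langle\cdot,\cdot\rangle$, and establish bijectivity using Theorem~\ref{crystal} and Lusztig's dimension formula. Two remarks, one small and one more substantive.

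First, a technical point: the free $\Z[q,q\inv]$-algebra on $\{\th_j\}$ does not contain the divided powers $\th_j^{(m)}$, so $\widetilde\ga(\th_j^{(m)})=[P_j^{(m)}]$ is not a condition on a map defined on that free algebra. The domain must be ${}'\f$ as the paper defines it (the $\Z[q,q\inv]$-subalgebra of the free $\Q(q)$-algebra generated by divided powers), and one must also know the structure constants of multiplication in $\kk$ lie in $\Z[q,q\inv]$ (Lemma~\ref{mstructureconstants}) for the map to land in $\kk$ rather than $K(\P)$.

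More seriously, your surjectivity argument has a gap. Knowing that each self-dual indecomposable projective $P$ occurs as a direct summand of some $P^{(c)}=P_{j_1}^{(c_1)}\circ P_{j_2}^{(c_2)}\circ\cdots$ tells you only that $[P^{(c)}]=[P]+\sum[P']$ with other indecomposable projectives $P'$; it does not by itself place $[P]$ in the image of $\chi$ over $\Z[q,q\inv]$. The missing ingredient is the unitriangularity of the expansion of $\{[P^{(\iota(b))}]\}_{b\in B(\infty)}$ in the basis of indecomposable projectives, which is precisely the content of Lemma~\ref{triangularity} in the paper; a unitriangular matrix over $\Z[q,q\inv]$ is invertible, and that is what makes the classes $[P^{(\iota(b))}]$ span $\kk$ and hence gives surjectivity (Lemma~\ref{surjective}). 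Your phrase ``and so exhausts the Grothendieck group'' assumes the conclusion rather than supplying this step. Note also that your alternative for bijectivity --- isometry plus nondegeneracy gives injectivity, then invoke the dimension formula --- does not close the loop on its own either: an injective $\Z[q,q\inv]$-linear map between free modules of the same graded rank need not be surjective, so the triangularity argument cannot be dispensed with.
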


\begin{theorem}\label{simiso}
 There is a unique $A[q,q\inv]$-linear grading preserving isomorphism $\ga^*\map{\kk^*}{\f^*\otimes A}$ such that
\begin{enumerate}
 \item $\ga^*([L(j)])=\theta_j^*$ for all $j\in J$.
\item Under the isomorphism $\ga^*$, the multiplication $\f^*_\la\otimes \f^*_\mu\to \f^*_{\la+\mu}$ corresponds to the product on $\kk^*$ induced by $\Ind_{\la,\mu}$.
\item Under the the isomorphism $\ga^*$, the comultiplication $\f^*_{\la+\mu}\to \f^*_\la\otimes \f^*_\mu$ corresponds to the  coproduct on $\kk^*$ induced by $\Res_{\la,\mu}$.
\item Under the isomorphism $\ga^*$, the bar involution on $\f^*$ corresponds to the anti-linear antiautomorphism on $\kk^*$ induced by the duality $\D$.
\item The isomorphism $\ga^*$ is the graded dual of the isomorphism $\ga$ in Theorem \ref{projiso}.
\end{enumerate}
\end{theorem}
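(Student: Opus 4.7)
The plan is to define $\ga^*$ as the graded dual of the isomorphism $\ga\map{\f}{\kk}$ of Theorem \ref{projiso} with respect to the pairings on both sides, so that property (5) holds by construction and the remaining properties become dualizations of their counterparts for $\ga$. The pairing $\langle\cdot,\cdot\rangle$ on $\f$ is nondegenerate by the definition of $\f_{\Q(q)}$ (the free algebra modulo the radical of this form), and the $\Hom$-pairing (\ref{pairing}) will be nondegenerate on $\kk_\nu \times \kk_\nu^*$ because self-dual indecomposable projectives and self-dual simples pair in a unitriangular fashion. Since Theorem \ref{projiso}(5) says $\ga$ intertwines the two forms, its dual gives a $\Z[q,q\inv]$-linear grading-preserving isomorphism $\ga^*\map{\kk^*}{\f^*}$ characterized by $\langle x,\ga^*(\xi)\rangle_\f = \langle \ga(x),\xi\rangle$ for $x\in\f$ and $\xi\in\kk^*$.

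Properties (2) and (3) then follow by the standard principle that dualizing a bialgebra isomorphism exchanges product and coproduct. The product on $\f^*$ is by definition the dual of the coproduct $r$ on $\f$, while on the categorical side the adjunction $\Ind\dashv\Res$ descends to the pairing (\ref{pairing}) to show that the product on $\kk^*$ induced by $\Ind$ is dual to the coproduct on $\kk$ induced by $\Res$. Hence property (3) of $\ga$ dualizes to property (2) of $\ga^*$, and symmetrically (2) dualizes to (3). For property (4), the bar involution on $\f^*$ is by definition the dual of the bar on $\f$, while the dualities on $\kk$ and $\kk^*$ are mutually dual under (\ref{pairing}) via the canonical identification $\Hom(\D N,\D M)\cong \Hom(M,N)$; dualizing property (4) of $\ga$ gives property (4) of $\ga^*$.

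Only property (1) is not purely formal. By the characterization of $\ga^*$, proving $\ga^*([L(j)])=\theta_j^*$ amounts to showing $\langle \ga(x),[L(j)]\rangle = \theta_j^*(x)$ for every $x\in\f$; by homogeneity both sides vanish unless $x\in\f_j$, and $\f_j$ is one-dimensional spanned by $\theta_j$, so this reduces via property (1) of $\ga$ to the identity $\langle [P_j],[L(j)]\rangle = 1$. This last is a direct trace computation on the one-dimensional graded piece of $\Hom_{R(j)}(P_j,L(j))$ in the rank-one folded KLR algebra $R(j)$.

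The main obstacle is controlling the pairing (\ref{pairing}) precisely enough to guarantee that $\ga^*$ lands in $\f^*$ and is surjective onto it rather than into a completion. This requires showing that pairing self-dual indecomposable projectives against self-dual simples yields a matrix over $\Z[\zeta_n,q,q\inv]$ of the correct unitriangular shape; the $\zeta_n$-factors introduced by Lemma \ref{equivariantstructure} must combine correctly with the traces in (\ref{pairing}), a delicate bookkeeping task in the folded setting. Once this is in hand, everything else is formal.
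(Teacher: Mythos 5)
Your proposal takes essentially the same approach as the paper: Theorem \ref{simiso} is obtained by dualizing Theorem \ref{projiso}, with property (5) built in by construction. The paper does not write out an explicit proof of Theorem \ref{simiso}; the dualization is supported by Theorem \ref{dual2} (the bases $\B$ and $\B^*$ are dual under the Hom pairing) and the Proposition that $\kk$ and $\kk^*$ are graded dual twisted bialgebras. The ``main obstacle'' you flag at the end is less delicate than you fear: by Schur's Lemma, $\Hom(P,L)$ is either $0$ or $k$ concentrated in degree zero for $P$ an indecomposable self-dual projective and $L$ a self-dual simple, so the pairing matrix between $\B$ and $\B^*$ is literally the identity (not merely unitriangular), no $\zeta_n$-factors survive, and there is no completion issue; this is exactly the content of the paper's Theorem \ref{dual2}.
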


\section{One Colour Folded KLR Algebras}\label{onecolour}

Let $j\in J$. Let the elements of the orbit $j$ be $i_1,i_2,\ldots,i_t$.

The algebra $R(i_1+\cdots+i_t)$ has a unique irreducible representation $L_j$. This representation has a basis $[w]$ where $w$ runs over all permutations of $\{i_1,\ldots,i_t\}$. The idempotent $e_\ii$ acts on $[w]$ by 1 if $\ii=w$ and by 0 otherwise. The $y_i$ all act by zero. The $\tau_i$ act by the symmetric group action on the set of all permutations.

The module $a^*L_j$ has by construction the same underlying vector space but the action is twisted by the automorphism $a$. We define $\sigma\map{a^*L_j}{L_j}$ by $\sigma[w]=[aw]$. Define
\[
 L(j)=(L_j,\sigma).
\]
Let $P(j)$ be the projective cover of $L(j)$ in $\C_j$. We can explicitly construct $P(j)$ in a similar fashion to the construction of $L(j)$. It is of the form $P(j)=(P_j,\sigma)$ where $P_j=\oplus_{w\in S_t} k[x_1,\ldots x_t][w]$.

\begin{lemma}\label{7.1}
 For all $j\in J$, we have $(P(j),P(j))=(1-q^{j\cdot j})\inv$.
\end{lemma}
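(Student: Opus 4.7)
The plan is to compute $(P(j),P(j))$ directly from the definition in (\ref{projpairing}) by identifying $P_j^\psi\otimes_{R(j)} P_j$ with a concrete polynomial ring and then taking the graded trace of the induced cyclic permutation.

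First, I would fix a sequence $\ii_0=(i_1,\ldots,i_t)\in\Seq(j)$ and identify $P_j\cong R(j)e_{\ii_0}$, so that since the antiautomorphism $\psi$ fixes $e_{\ii_0}$ we obtain
\[
 P_j^\psi\otimes_{R(j)} P_j\;\cong\;e_{\ii_0}R(j)e_{\ii_0}.
\]
Because the entries of $\ii_0$ are pairwise distinct and $\tau_w e_{\ii_0}=e_{w\ii_0}\tau_w$, the standard basis $\{\tau_w y^\alpha e_{\ii_0}\}$ of $R(j)e_{\ii_0}$ survives the left idempotent cut only for $w=e$. Hence
\[
 e_{\ii_0}R(j)e_{\ii_0}\;=\;k[y_1,\ldots,y_t]\,e_{\ii_0},
\]
with each $y_r$ of degree $i_r\cdot i_r=2d_i$, where $d_i$ is the common value of $d$ on the orbit $j$.

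Next, I would unwind the induced action of $\sigma\otimes\sigma$ on this polynomial ring. The automorphism $a$ of $R(j)$ sends $y_r e_{\ii_0}$ to $y_r e_{a\ii_0}$, and the $\sigma$ on $P(j)$ is normalized (lifting $\sigma[w]=[aw]$ from $L(j)$ through the projective cover) so as to absorb the resulting $\tau$-intertwiner from $a\ii_0$ back to $\ii_0$. Combining the two factors, $\sigma\otimes\sigma$ acts on $k[y_1,\ldots,y_t]\,e_{\ii_0}$ as the cyclic permutation $y_r\mapsto y_{a(r)}$ of order $t$.

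Finally, the graded trace of a $t$-cycle on $k[y_1,\ldots,y_t]$ is easy: the only fixed monomials are the powers $(y_1 y_2\cdots y_t)^m$ for $m\geq0$, each of degree $2d_i t m$, so
\[
 (P(j),P(j))=\sum_{m\geq0}q^{2d_i t m}=\frac{1}{1-q^{2d_i t}}=\frac{1}{1-q^{j\cdot j}},
\]
using $j\cdot j=\sum_{i,i'\in j}i\cdot i'=2d_i t$ (the off-diagonal terms vanish by the hypothesis on $a$). The main obstacle will be the middle step: verifying that $\sigma\otimes\sigma$ acts exactly as the cyclic permutation with no residual scalar. This requires carefully tracing through the canonical isomorphism $a^*P_j^\psi\otimes_{R(j)} a^*P_j\cong P_j^\psi\otimes_{R(j)} P_j$ together with the normalization of $\sigma$ on $P(j)$ dictated by requiring it to be the projective cover of $L(j)$, thereby resolving the scalar ambiguity appearing in Lemma \ref{equivariantstructure}.
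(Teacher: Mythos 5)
Your proof is correct and takes essentially the same approach as the paper: identify $P_j^\psi\otimes_{R(j)}P_j$ with a polynomial ring in $t=|j|$ variables, observe that $\sigma\otimes\sigma$ acts by a transitive permutation of those variables, and read off the graded trace as $\sum_{m\geq 0}q^{(j\cdot j)m}=(1-q^{j\cdot j})\inv$. The paper reaches the polynomial ring via the explicit presentation $P_j=\oplus_{w\in S_t}k[x_1,\ldots,x_t][w]$ and the assignment $f(x)[w]\otimes g(y)[u]\mapsto\delta_{wu}f(z)g(z)$ rather than through $e_{\ii_0}R(j)e_{\ii_0}$, and likewise asserts the permutation action of $\sigma\otimes\sigma$ without spelling out the scalar normalization that you correctly flag as the technical crux.
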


\begin{proof}
Let $t=|j|$. Recall that as a vector space, $P_j=\oplus_{w\in S_t} k[x_1,\ldots,x_t][w]$. There is an isomorphism
\[
 P_j^\psi \otimes_{R(\nu)} P_j\cong k[z_1,\ldots,z_t]
\]
given by
\[
 f(x)[w]\otimes g(y)[u]\mapsto \delta_{wu} f(z)g(z).
\]
The automorphism $\sigma\otimes\sigma$ acts by permuting the $z_i$'s. In the monomial basis, this action is by a permutation matrix, so the trace is the number of fixed points, which consists only of powers of $z_1z_2\cdots z_t$. The statement of the lemma follows.
\end{proof}

\begin{lemma}
 The object $L(j^n):=q_j^{{n \choose 2}}L(j)^{\circ n}$ is a self-dual simple object of $\C_{nj}$.
\end{lemma}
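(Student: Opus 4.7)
The plan is to reduce the claim to the one-colour unfolded case for a single symmetric vertex, exploiting that the orbit $j=\{i_1,\ldots,i_t\}$ consists of pairwise orthogonal vertices. Since $i_a \cdot i_b=0$ for $a\neq b$, the polynomials $Q_{i_a,i_b}$ are nonzero constants, so the crossings between strands of different colours in $R(nj)$ are invertible and of degree zero. This makes the multi-coloured orbit situation structurally very close to the single-colour case.

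I would first identify the underlying $R(nj)$-module of $L_j^{\circ n}$. Starting from $L_j \cong \Ind_{R(i_1)\otimes\cdots\otimes R(i_t)}^{R(j)}(k\otimes\cdots\otimes k)$, where each $k$ is the trivial module over $R(i_a)=k[y]$, and iterating via transitivity of induction together with a reshuffling of sequence orderings (which is an isomorphism up to grading shift because of the vanishing degrees of cross-colour crossings), one obtains
\[
L_j^{\circ n}\cong \Ind_{R(ni_1)\otimes\cdots\otimes R(ni_t)}^{R(nj)}\bigl(L(i_1)^{\circ n}\otimes\cdots\otimes L(i_t)^{\circ n}\bigr).
\]
Each $L(i_a)^{\circ n}$, as an $R(ni_a)$-module, is a grading-shifted version of the unique self-dual simple, by the standard one-colour result of Khovanov-Lauda applied to a symmetric vertex. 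The tensor product is then a simple module over the parabolic. To show the induction is simple as an $R(nj)$-module, I would either compare graded dimensions against the expected unique simple with top weight $(i_1^n,\ldots,i_t^n)$, or compute via the Mackey filtration (Theorem \ref{mackey}) that the endomorphism algebra is one-dimensional in degree zero; the orthogonality of the $i_a$ makes most of the Mackey factors either trivial or easy to discard.

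To lift to $\C_{nj}$: since $L_j^{\circ n}$ is $a^*$-invariant (because $L_j$ is), Lemma \ref{equivariantstructure} equips it with an equivariant structure, and the natural structure on $L(j)^{\circ n}$ is non-traceless, so by Theorem \ref{classifysimples} it defines a simple object of $\C_{nj}$. For self-duality, one uses that duality interacts with induction by a formula of the form $\D(M\circ N)\cong q^{\deg M\cdot \deg N}\,\D N\circ \D M$ in the folded setting (a straightforward extension of Lemma \ref{lem:dualproj} adapted to finite-dimensional self-dual objects); iterating for $M=N=L(j)$, which is self-dual by construction, gives $\D L(j)^{\circ n}\cong q^{n(n-1)d_j}L(j)^{\circ n}=q_j^{2\binom{n}{2}}L(j)^{\circ n}$, so the shift $q_j^{\binom{n}{2}}$ in the definition of $L(j^n)$ exactly balances this to produce a self-dual object. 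The main obstacle is the simplicity assertion of the previous paragraph: while the orthogonality of the $i_a$'s is morally why the induction stays simple, turning this into a rigorous argument requires either a careful graded-character computation against the expected simple or a delicate Mackey-filtration analysis of the endomorphism ring.
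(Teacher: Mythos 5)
Your proof is essentially correct but takes a genuinely different route from the paper, most notably on self-duality. The paper's own proof of this lemma is a deferral: it asserts the simplicity of the underlying unfolded module $L_j^{\circ n}$ without further argument, and it postpones self-duality entirely to Lemma~\ref{fselfdual} (the $R$-matrix argument showing $\tilde f_j$ preserves $\sB$), from which $L(j^n)=\tilde f_j L(j^{n-1})$ inductively inherits self-duality. Your approach, by contrast, computes self-duality directly from the duality--induction compatibility $\D(M\circ N)\cong q^{\pm\nu\cdot\mu}\,\D N\circ\D M$ for finite-dimensional modules, and the arithmetic with the shift $q_j^{\binom{n}{2}}$ does come out right. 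This is a valid and more elementary alternative, though note the paper only records the projective version of this compatibility (Lemma~\ref{lem:dualproj}); the finite-dimensional version is standard in the unfolded setting (cf.\ Lauda--Vazirani), but one does need to check that the isomorphism respects the $a^*$-equivariant structures before citing it in $\C_{nj}$, which you gloss over. The paper's route has the advantage that Lemma~\ref{fselfdual} must be proved anyway and handles the general $\tilde f_j M$ uniformly. On simplicity, you are right that the orthogonality of the orbit elements makes parabolic induction from $R(ni_1)\otimes\cdots\otimes R(ni_t)$ a Morita equivalence, which immediately gives irreducibility of $L_j^{\circ n}$; you need not flag this as a ``main obstacle,'' as it is a standard fact and is precisely what underlies the paper's one-line assertion. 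The lift to $\C_{nj}$ via Lemma~\ref{equivariantstructure} and Theorem~\ref{classifysimples} is correct, though simpler than you suggest: once the underlying module is known to be simple, Theorem~\ref{classifysimples} automatically rules out tracelessness, so there is nothing extra to check.
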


\begin{proof}
 The object $L(j^n)$ is irreducible because $L_j^{\circ n}$ is an irreducible $R(n(i_1+\cdots+i_t))$-module. The self-duality appears later in a more general form in Lemma \ref{fselfdual}. We don't bother to prove it here as it is not yet important.
\end{proof}

Let $P(j)^{(n)}$ be the projective cover of $L(j^n)$ in $\C_j$

\begin{theorem}\label{7.3}
 In $\displaystyle\bigoplus_{n=0}^\infty K(\P_{nj})$, we have the identity $$[P(j)^{(m)}][P(j)^{(n)}]={m+n \brack n}_j [P(j)^{(m+n)}].$$
\end{theorem}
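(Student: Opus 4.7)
The plan is to establish the auxiliary identity $[P(j)^{\circ k}] = [k]_j! \cdot [P(j)^{(k)}]$ in $K(\P_{kj})$ for all $k \geq 0$, from which the theorem follows by the two-way computation
\[
[m]_j!\, [n]_j! \cdot [P(j)^{(m)}][P(j)^{(n)}] = [P(j)^{\circ m}] \cdot [P(j)^{\circ n}] = [P(j)^{\circ(m+n)}] = [m+n]_j! \cdot [P(j)^{(m+n)}],
\]
so that the coefficient of $[P(j)^{(m+n)}]$ becomes ${m+n \brack n}_j$. To make the cancellation rigorous I would upgrade the auxiliary identity to an actual module isomorphism $P(j)^{\circ k} \cong \bigoplus_i q^{a_i} P(j)^{(k)}$ with $\sum_i q^{a_i} = [k]_j!$; then both sides of the desired formula live in the Krull--Schmidt category $\P_{(m+n)j}$ as sums of grading shifts of $P(j)^{(m+n)}$, and matching multiplicities gives the theorem.

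For the auxiliary identity, I would first show that every indecomposable summand of $P(j)^{\circ k}$ is a grading shift of $P(j)^{(k)}$. By the lemma preceding the theorem, $L(j^k) = q_j^{\binom{k}{2}} L(j)^{\circ k}$ is simple in $\C_{kj}$, and $P(j)^{(k)}$ is its projective cover by definition. Frobenius reciprocity gives $\Hom_{\C_{kj}}(P(j)^{\circ k}, S) \cong \Hom(P(j)^{\otimes k}, \Res S)$ for any simple $S \in \C_{kj}$, and this is nonzero exactly when $L(j)^{\otimes k}$ appears as a quotient of $\Res S$. Using the classification of simples (Theorem \ref{classifysimples}) together with the fact that in $\C_{kj}$ the only admissible $\N J$-residue pattern is $(j, j, \ldots, j)$, the only simple $S$ with this property is a grading shift of $L(j^k)$.

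The multiplicity is then computed by
\[
\dim_q \Hom_{\C_{kj}}(P(j)^{\circ k}, L(j^k)) = \dim_q \Hom(P(j)^{\otimes k}, \Res L(j^k)) = [k]_j!,
\]
where the final equality comes from applying the Mackey filtration of Theorem \ref{mackey} to $\Res \Ind L(j)^{\otimes k}$, extracting the $L(j)^{\otimes k}$-isotypic component, and using the normalisation $L(j^k) = q_j^{\binom{k}{2}} L(j)^{\circ k}$. Combined with $\dim_q \Hom(P(j)^{(k)}, L(j^k)) = 1$, this says that $P(j)^{\circ k}$ contains precisely $[k]_j!$ graded copies of $P(j)^{(k)}$ as summands. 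The main obstacle is the uniqueness step: making precise the claim that $L(j^k)$ is the unique simple in $\C_{kj}$ (up to grading shift) whose restriction has $L(j)^{\otimes k}$ in its head, which requires a careful application of Theorem \ref{classifysimples} in conjunction with the one-colour structure inherited from the fact that $L_j$ is the unique simple of $R(j)$.
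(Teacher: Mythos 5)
Your overall strategy — reduce to the auxiliary identity $[P(j)^{\circ k}] = [k]_j!\,[P(j)^{(k)}]$ and cancel $q$-factorials — is the natural dual of what the paper does. The paper instead proves the coproduct formula $r([L(j^n)]) = \sum_{a+b=n}\binom{n}{a}_j\,[L(j^a)]\otimes[L(j^b)]$ by running the Mackey filtration on $\Res_{mj,nj}L(j^{m+n})$ and observing that every subquotient is either a permutation term or traceless, hence zero in the Grothendieck group. Your version works on the projective side, and that choice introduces a subtlety you have not dealt with.

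The gap is in the step claiming that the only simple of $\C_{kj}$ (up to grading shift) whose restriction has $L(j)^{\otimes k}$ as a quotient is $L(j^k)$, and more broadly in the assertion that ``$P(j)^{\circ k}$ contains precisely $[k]_j!$ graded copies of $P(j)^{(k)}$ as summands.'' While $R(kj)$ does have a unique simple $L_j^{\circ k}$, the category $\C_{kj}$ has up to $n$ non-isomorphic simples in each weight, namely the root-of-unity twists $\zeta_n^i L(j^k)$, with correspondingly $n$ indecomposable projectives $\zeta_n^i P(j)^{(k)}$. A priori the Krull--Schmidt decomposition of $P(j)^{\circ k}$ in $\C_{kj}$ could involve these twists, in which case $[P(j)^{\circ k}]$ would have coefficients in $\Z[\zeta_n,q,q^{-1}]$ rather than $\Z[q,q^{-1}]$, and your two-way cancellation computation would not prove the stated $\Z[q,q^{-1}]$-identity. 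Your $\Hom$-count only computes the multiplicity of $P(j)^{(k)}$ itself, not of its twists, and moreover the Grothendieck group pairing of the paper is a trace form, not a graded dimension, so $\dim_q\Hom$ is not literally the quantity you want. Closing this gap is possible — for instance by also tracking the graded multiplicity of $P_j^{(k)}$ inside the underlying $R(kj)$-module $P_j^{\circ k}$, which is $[k]_j!$, and combining this with your trace computation to force the twisted multiplicities to vanish — but this is genuine extra work, and it is precisely the kind of bookkeeping the paper's dualized argument avoids, since on the simple side the traceless Mackey subquotients visibly die in $K(\L)$ and no twist analysis is needed.
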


\begin{proof}
It suffices to prove the dual version, namely
\begin{equation}\label{onecoloureqn}
 r([L(j^n)])=\sum_{a+b=n}{n\brack a}_j [L(j^a)]\otimes [L(j^b)].
\end{equation}

 Consider $\Res_{mj,nj} L(j^{m+n})$ under the Mackey filtration. All terms either come from permutations or are trivially traceless by the $a$-action on the Mackey filtration.
\end{proof}

\section{Some Important Lemmas}

This section is modelled on \cite[\S 3.2]{khovanovlauda}, the technical heart of that paper, which in turn is modelled on \cite{grojnowski}, which \cite[Ch 5]{kleshchevbook} is an exposition of in the case of graded Hecke algebras. We provide all proofs for the sake of completeness.

For any $j\in J$ and any object $M$ of $\C_\nu$, define
\begin{equation}\label{defepj}
 \epsilon_j(M) = \{\operatorname{max} n\in \N \mid \Res_{nj,\nu-nj}M\neq 0\}.
\end{equation}

%
%
%

\begin{lemma}\label{3.7}
 Let $N$ be an irreducible object in $\C_{\nu}$ and $j\in J$ such that $\epsilon_j(N)=0$. Let $M=L(j)^{\circ n}\circ N$. Then
\begin{enumerate}
 \item $\Res_{nj,\nu-nj}\cong L(j^n)\otimes N$.
\item The head of M is irreducible and $\epsilon_j(\operatorname{hd}M)=n$
\item All other composition factors $L$ of $M$ have $\epsilon_j(L)<\epsilon_j(M)$.
\end{enumerate}
\end{lemma}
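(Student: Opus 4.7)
The plan is to extract all three statements from a single application of the Mackey filtration (Theorem~\ref{mackey}) to $\Res_{mj,\nu-mj}M$ for $m\geq n$, using $\epsilon_j(N)=0$ to annihilate almost every subquotient.

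Write $M=\Ind_{nj,\nu-nj}(L(j)^{\circ n}\otimes N)$. The non-traceless Mackey subquotients of $\Res_{mj,\nu-mj}M$ are indexed by matrices $(\nu_{pq})$ with row sums $(nj,\nu-nj)$ and column sums $(mj,\nu-mj)$. Because $nj$ lies in the single-colour line $\N j\subset\N J$, the entries are forced to have the shape $\nu_{11}=aj$, $\nu_{12}=(n-a)j$, $\nu_{21}=(m-a)j$, $\nu_{22}=\nu-(m+n-a)j$ for some $0\leq a\leq\min(m,n)$. The hypothesis $\epsilon_j(N)=0$ kills any subquotient with $\nu_{21}\neq 0$ when evaluated on the second tensor factor $N$, forcing $a=m$. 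For $m>n$ this is impossible, so $\Res_{mj,\nu-mj}M=0$ on the nose. For $m=n$ only $a=n$ survives, and the resulting matrix $\nu_{11}=nj$, $\nu_{12}=\nu_{21}=0$, $\nu_{22}=\nu-nj$ is $a$-invariant, hence non-traceless; tracing through $\Ind_\nu^\mu\circ\tau\circ\Res_\nu^\lambda$ returns $L(j)^{\circ n}\otimes N$, with any grading shift absorbed into the identification $L(j^n)=q_j^{\binom{n}{2}}L(j)^{\circ n}$ from Section~\ref{onecolour}. This proves (1) and gives $\epsilon_j(M)=n$.

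For (2), Frobenius reciprocity identifies maps $M\to L$ with maps $L(j)^{\circ n}\otimes N\to\Res L$, so a simple quotient $L$ of $M$ yields a nonzero, hence injective, map $L(j^n)\otimes N\hookrightarrow\Res L$. Since $\Res$ is exact, $\Res L$ is simultaneously a quotient of the simple object $\Res M\cong L(j^n)\otimes N$, so $\Res L\cong L(j^n)\otimes N$ for every simple in $\hd M$. Applying $\Res$ to $M\twoheadrightarrow\hd M=\bigoplus_i L_i$ gives a surjection of the simple $\Res M$ onto $\bigoplus_i\Res L_i$, forcing $\hd M$ to be simple with $\epsilon_j(\hd M)=n$. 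For (3), set $K=\ker(M\twoheadrightarrow\hd M)$ and apply $\Res_{nj,\nu-nj}$ to the short exact sequence $0\to K\to M\to\hd M\to 0$. The induced map $\Res M\to\Res\hd M$ is a nonzero endomorphism of the simple $L(j^n)\otimes N$ and is therefore an isomorphism, so $\Res_{nj,\nu-nj}K=0$; together with $\Res_{mj,\nu-mj}M=0$ for $m>n$ from the first paragraph, every composition factor $L'$ of $K$ has $\epsilon_j(L')<n$. Since $\hd M$ itself has $\epsilon_j=n$ it cannot appear in $K$, so the composition factors of $M$ other than $\hd M$ are exactly those of $K$, yielding (3).

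The main technical burden is the Mackey bookkeeping of the first paragraph, specifically checking that the unique surviving subquotient produces $L(j^n)\otimes N$ on the nose as an $a$-equivariant object of $\C_{nj\sqcup\nu-nj}$ and not merely at the Grothendieck-group level, and that the only grading shift left over is the one already encoded in the definition $L(j^n)=q_j^{\binom{n}{2}}L(j)^{\circ n}$ of the one-colour module. Once this is in hand, parts (2) and (3) are formal consequences of Frobenius reciprocity and the exactness of restriction.
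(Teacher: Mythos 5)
Your proof follows essentially the same route as the paper: part (1) from the Mackey filtration using $\epsilon_j(N)=0$, part (2) by showing $\Res_{nj,\nu-nj}$ of any simple quotient recovers the simple object $L(j^n)\otimes N$ and then using exactness of restriction to rule out two summands in the head, and part (3) by applying $\Res_{nj,\nu-nj}$ to $0\to K\to M\to\hd M\to 0$. You supply more explicit Mackey bookkeeping in part (1) than the paper does, which is a welcome addition; the one point you should tighten is that the claim ``$\Res_{mj,\nu-mj}M=0$ on the nose'' for $m>n$ (used when you bound $\epsilon_j$ of composition factors of $K$) requires the \emph{traceless} Mackey subquotients, not only the non-traceless ones you enumerate, to vanish as modules --- this does hold, because every $\eta_{21}\in\N I$ arising in the unfolded filtration satisfies $\eta_{21}\geq j$ when $m>n$ and the unique simple of $R(\eta_{21})$ has nonzero $\Res_{j,\eta_{21}-j}$, but it is worth saying so explicitly since the paper elides the same point.
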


\begin{proof}
 Since $\e_j(N)=0$, (1) is a consequence of the Mackey filtration.
 
 If $Q$ is a quotient of $M$, then by adjunction there is a morphism from $L(j^n)\otimes N$ to $\Res_{nj,\nu-nj}Q$, which is injective as $N$ is irreducible.
 Now suppose that $Q_1\oplus Q_2$ is a quotient of $M$ with $Q_1$ and $Q_2$ nonzero. As restriction is exact, we get a surjection from $\Res_{nj,\nu-nj}M$ to $\Res_{nj,\nu-nj}Q_1\oplus \Res_{nj,\nu-nj}Q_2$.
 By (1) this is a contradition, hence the head of $M$ is irreducible. Furthermore this argument also shows that $\e_j(\hd(M))=n$.
 
Now consider the short exact sequence $0\to K\to M\to \hd(M)\to 0$ and apply $\Res_{nj,\nu-nj}$. We have shown above that there is an induced isomorphism $\Res_{nj,\nu-nj}M\cong \Res_{nj,\nu-nj}\hd(M)$. Therefore $\Res_{nj,\nu-nj}K=0$ which proves (3).
\end{proof}

\begin{lemma}\label{3.8}
 Let $M$ be irreducible in $\C_\nu$ and $j\in J$. Let $m=\epsilon_j(M)$. Then there exists a simple object $X$ in $\C_{\nu-mj}$ such that
 \[
  \Res_{mj,\nu-mj}(M)\cong  L(j^m)\otimes X.
 \]
\end{lemma}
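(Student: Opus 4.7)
The proof is modelled on \cite[Lemma~3.8]{khovanovlauda}, adapted to the folded category $\C_\nu$. First I choose a simple submodule $U$ of $\Res_{mj,\nu-mj}M$; this exists because the restriction is nonzero by definition of $m$ and is of finite length. By Proposition~\ref{4.2}, $U\cong Y\otimes X$ with $Y\in\C_{mj}$ and $X\in\C_{\nu-mj}$ both simple.

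I next observe that $\epsilon_j(X)=0$ by exploiting the maximality of $m$. Transitivity of restriction gives $\Res_{mj,kj,\nu-(m+k)j}M=\Res^{(m+k)j}_{mj,kj}\Res_{(m+k)j,\nu-(m+k)j}M$, which vanishes for $k\geq 1$. On the other hand, exactness of restriction combined with the inclusion $U\hookrightarrow\Res_{mj,\nu-mj}M$ produces an inclusion $Y\otimes\Res_{kj,\nu-(m+k)j}X\hookrightarrow\Res_{mj,kj,\nu-(m+k)j}M$. Setting $k=1$ and using $Y\neq 0$ forces $\Res_{j,\nu-(m+1)j}X=0$, i.e.\ $\epsilon_j(X)=0$.

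Separately I need to identify $Y$ with $L(j^m)$ up to a grading shift. I would prove this as a one-colour uniqueness lemma: every simple in $\C_{mj}$ is isomorphic to $L(j^m)$ up to a grading shift and a $\zeta_n$-twist. The proof is a short induction on $m$, using adjunction applied to a simple submodule of $\Res_{(m-1)j,j}Y$ together with the fact, readily established from Section~\ref{onecolour}, that $L(j^{m-1})\circ L(j)$ is itself simple and equals $L(j^m)$ up to a shift. I expect this one-colour input to be the main obstacle, since Section~\ref{onecolour} does not explicitly assert uniqueness of simples in $\C_{mj}$.

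With $Y\cong L(j^m)$ and $\epsilon_j(X)=0$, Frobenius reciprocity converts the inclusion $L(j^m)\otimes X\hookrightarrow\Res_{mj,\nu-mj}M$ into a surjection $L(j^m)\circ X\twoheadrightarrow M$. By Lemma~\ref{3.7} applied with $N=X$, the module $L(j^m)\circ X$ (equal to $L(j)^{\circ m}\circ X$ up to a grading shift) has an irreducible head whose restriction to $(mj,\nu-mj)$ is precisely $L(j^m)\otimes X$. Hence $M$ coincides with this head and $\Res_{mj,\nu-mj}M\cong L(j^m)\otimes X$, as required.
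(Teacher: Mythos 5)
Your proof is correct and takes essentially the same route as the paper: take a simple submodule of the restriction, decompose it as a tensor product, show $\epsilon_j(X)=0$ via maximality of $m$ and the Mackey filtration, then turn the inclusion into a surjection $L(j^m)\circ X\twoheadrightarrow M$ by adjunction and conclude. The only cosmetic differences are that the paper phrases the final step as a direct comparison of adjunction morphisms (showing $f\circ a$ is both injective and surjective) rather than routing the conclusion through Lemma~\ref{3.7}, and that the paper dispatches the one-colour uniqueness you flag as the ``main obstacle'' in a single clause by invoking that $L(j)^{\circ m}$ is the unique simple $R(mj)$-module (any $\zeta_n$-twist or grading shift on $Y$ is absorbed into $X$); your citation of Proposition~\ref{4.2} for the tensor decomposition of the simple submodule is a little loose --- what you really want is Theorem~\ref{classifysimples} together with the fact that simples over $R(\la)\otimes R(\mu)$ are external tensor products, which is exactly the content the paper uses inside its proof of Proposition~\ref{4.2}.
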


\begin{proof}
 Since $L(j)^{\circ m}$ is the only simple $R(mj)$-module, there exists a simple $X$ such that $L(j^m)\otimes X$ is a submodule of  $\Res_{mj,\nu-mj}(M)$. Since $m=\e_j(M)$ it must be that $\e_j(X)=0$. The inclusion of $L(j^m)\otimes X$ into $\Res_{mj,\nu-mj}(M)$ factors through the adjunction morphism in the following diagram:
 
 \centerline{\xymatrix{
&  L(j^m)\otimes X  \ar[rr]\ar[dr]_{a}
&  
  & \Res_{mj,\nu-mj}(M)
 \\
&  
  &  \Res_{mj,\nu-mj}(L(j^m)\circ X)\ar[ur]_f
  & 
}}

Since $\e_j(X)=0$, the Mackey filtration shows that the adjunction morphism $a$ is an isomorphism. 
The inclusion of $L(j^m)\otimes X$ into $\Res_{mj,\nu-mj}(M)$ yields by adjunction a morphism from $L(j^m)\circ X$ to $M$ which is surjective as $M$ is simple. The map $f$ is obtained by applying the restriction functor to this surjection. Since the restriction functor is exact, $f$ is surjective.

Therefore $f\circ a$ is a surjective map from a simple source to a target which is nonzero as $\e_j(M)=m$. Thus it is an isomorphism.
\end{proof}

\begin{lemma}\label{8.4}
 Let $N$ be irreducible in $\C_\nu$, $m\in \N$ and $j\in J$. Let $M=L(j^m)\circ N$. Then the head of $M$ is irreducible, $\e_j(\hd(M))=\e_j(N)+m$ and all other composition factors $L$ of $M$ have $\e_j(L)<\e_j(N)+m$.
\end{lemma}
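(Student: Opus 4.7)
My plan is to reduce this lemma to the case $\e_j(N) = 0$, which is precisely Lemma~\ref{3.7}. First I would set $n := \e_j(N)$ and invoke Lemma~\ref{3.8} to obtain a simple object $X \in \C_{\nu - nj}$ with $\e_j(X) = 0$ and an isomorphism $\Res_{nj,\nu-nj}(N) \cong L(j^n) \otimes X$. By tensor-hom adjunction, the inclusion $L(j^n) \otimes X \hookrightarrow \Res_{nj,\nu-nj}(N)$ corresponds to a nonzero morphism $L(j^n) \circ X \to N$, which is surjective since $N$ is simple.

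Next, I would apply the exact induction functor $L(j^m) \circ (-)$ to this surjection to obtain
\[
M' := L(j^m) \circ L(j^n) \circ X \twoheadrightarrow L(j^m) \circ N = M.
\]
Using the definition $L(j^k) = q_j^{\binom{k}{2}} L(j)^{\circ k}$ together with associativity of induction, $L(j^m) \circ L(j^n)$ is a grading shift of $L(j^{m+n})$, so $M'$ is (up to grading shift) $L(j^{m+n}) \circ X$. Since $\e_j(X) = 0$, I would apply Lemma~\ref{3.7} to $X$ with parameter $m+n$: the head $\hd(M')$ is irreducible with $\e_j(\hd M') = m+n$, and every other composition factor $L$ of $M'$ satisfies $\e_j(L) < m+n$.

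To finish, I would note that $M \neq 0$, because induction from $R(mj) \otimes R(\nu)$ is faithful (since $R(mj+\nu)$ is a free $R(mj) \otimes R(\nu)$-module). Hence $\hd(M)$ is a nonzero quotient of the simple module $\hd(M')$, which forces $\hd(M) \cong \hd(M')$, yielding $\e_j(\hd M) = m + \e_j(N)$. Since $M$ is a quotient of $M'$, its composition factors form a submultiset of those of $M'$, so every composition factor of $M$ other than $\hd(M)$ appears in $M'$ outside $\hd(M')$ and thus has $\e_j < m + n$, as required.

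The main subtlety I anticipate is ensuring that $L(j^m) \circ L(j^n)$ really is a grading shift of the single simple module $L(j^{m+n})$, which is stronger than the product formula at the Grothendieck level (where a $q$-binomial coefficient appears in Theorem~\ref{7.3}). This stronger identification relies on $L(j)^{\circ k}$ being simple for all $k$, the central content of Section~\ref{onecolour}. Modulo this identification, the argument is a clean reduction to Lemma~\ref{3.7} via the chain of surjections above.
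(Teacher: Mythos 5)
Your proof is correct and follows essentially the same route as the paper: invoke Lemma~\ref{3.8} to write $\Res_{nj,\nu-nj}(N)\cong L(j^n)\otimes X$ with $\e_j(X)=0$, produce the surjection $L(j^n)\circ X\twoheadrightarrow N$ by adjunction, apply the exact functor $L(j^m)\circ(-)$ to get a surjection from a shift of $L(j^{m+n})\circ X$ onto $M$, and then cite Lemma~\ref{3.7}. Your extra care over the grading-shift identification $L(j^m)\circ L(j^n)\simeq L(j^{m+n})$ (up to shift) and the nonvanishing of $M$ is sound but is implicit in the paper's shorter writeup, which silently replaces $L(j^m)\circ L(j^n)$ by $L(j^{m+n})$ since $\e_j$ is insensitive to grading shifts.
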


\begin{proof}
 Let $n=\e(N)$. By Lemma \ref{3.8}, there exists a simple object $X$ such that
 \[
  \Res_{nj,\nu-nj}(N)\cong L(j^n)\otimes X.
 \]
This object $X$ satisfies $\e_j(X)=0$.

By adjunction, there is a nonzero morphism from $L(j^n)\circ X$ to $N$, which is surjective as $X$ is simple. Applying the exact functor $L(j^m)\circ -$, we obtain a surjection from $L(j^{m+n})\circ X$ to $M$. Therefore all composition factors of $M$ are composition factors of $L(j^{m+n})\circ X$. The statement of this lemma now follows from Lemma \ref{3.7}(2) and (3).
\end{proof}

\section{Crystal Operators}

Recall the definition of $\e_j(M)$ from (\ref{defepj}). We similarly define
\[
 \epsilon_j^*(M) = \{\operatorname{max} n\in \N \mid \Res_{\nu-nj,nj}M\neq 0\}.
\]

For any two modules $M$ and $N$, define $M\diamond N=\hd(M\circ N)$.

Let $\sB_\nu$ be set of all self-dual $(L,\sigma)$ in $\C_\nu$ where $L$ is simple.
Let $\sB=\sqcup_{\nu\in \N J} \sB_\nu$.
We now construct some crystal operators on $\sB$.

For $M\in \sB_\nu$, define
\begin{align*}
 \tilde e_j M &= q_j^{1-\epsilon_j(M)}\soc\Hom_{R(j)}(L(j),\Res_{j,\nu-j}M) \\
 \tilde e_j^*M &= q_j^{1-\epsilon_j^*(M)}\soc\Hom_{R(j)}(L(j),\Res_{\nu-j,j}M) \\
 \tilde f_j M &= q_j^{\epsilon_j(M)} L(j)\diamond M \\
 \tilde f_j^* M &= q_j^{\epsilon_j^*(M)} M\diamond L(j)\\
 \wt(M)&=\nu.
\end{align*}

Since $L(j)$ is the unique simple object in $\C_j$, the following identities hold:
\begin{align*}
 \epsilon_j(M) &= \max\{n\in\N \mid \tilde e_j^nM\neq 0\} \\
 \epsilon_j^*(M) &= \max\{n\in\N \mid (\tilde e_j^*)^nM\neq 0\} 
 \end{align*}

\begin{lemma}
 Assume that $\tilde e_j$ and $\tilde f_j$ send $\sB$ to $\sB\sqcup\{0\}$. Then for $b,b'\in \sB$, $b=\t f_j b$ if and only if $b'=\t e_j b$.
\end{lemma}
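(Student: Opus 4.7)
The plan is to use the induction-restriction adjunction
\[
\Hom_{R(\nu)}(L(j) \circ M, N) \cong \Hom_{R(\nu-j)}\bigl(M,\, \Hom_{R(j)}(L(j), \Res_{j,\nu-j} N)\bigr)
\]
together with the simple-head statement of Lemma \ref{8.4}, so that the construction of $\tilde f_j$ directly produces the socle appearing in the definition of $\tilde e_j$. The standing hypothesis that $\tilde e_j$, $\tilde f_j$ preserve $\sB\sqcup\{0\}$ will do the rest: it forces $\soc\Hom_{R(j)}(L(j), \Res_{j,\nu-j} b)$ to be a simple module, obviating any separate multiplicity-one argument.

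For the forward direction, assume $b = \tilde f_j b'$ and set $m = \epsilon_j(b')$. By definition $b = q_j^m \hd(L(j) \circ b')$, and Lemma \ref{8.4} yields $\epsilon_j(b) = m + 1$. The canonical degree $0$ surjection $L(j) \circ b' \twoheadrightarrow q_j^{-m} b$ transports under the adjunction to a nonzero degree $0$ map $b' \to q_j^{-m}\Hom_{R(j)}(L(j), \Res_{j,\nu-j} b)$, which is injective because $b'$ is simple. Hence $q_j^m b'$ embeds in $\soc\Hom_{R(j)}(L(j), \Res_{j,\nu-j} b)$; by the standing hypothesis the socle is itself simple, so it coincides with this submodule $q_j^m b'$. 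Therefore
\[
\tilde e_j b = q_j^{1-\epsilon_j(b)} \cdot q_j^m b' = q_j^{-m}\cdot q_j^m b' = b'.
\]

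For the reverse direction, assume $b' = \tilde e_j b$, and let $S := \soc\Hom_{R(j)}(L(j), \Res_{j,\nu-j} b)$, so that $S = q_j^{\epsilon_j(b)-1} b'$ embeds in degree $0$. The adjunction turns this embedding into a degree $0$ map $L(j) \circ S \to b$, surjective because $b$ is simple; thus $b = \hd(L(j) \circ S)$ as graded modules. Lemma \ref{8.4} then gives $\epsilon_j(S) = \epsilon_j(b) - 1$, and grading invariance of $\epsilon_j$ yields $m := \epsilon_j(b') = \epsilon_j(b) - 1$. Consequently
\[
\tilde f_j b' = q_j^m \hd(L(j) \circ b') = q_j^m \hd\bigl(L(j) \circ q_j^{-m} S\bigr) = \hd(L(j) \circ S) = b.
\]

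The only real work is keeping the grading shifts $q_j^m$ and $q_j^{1-\epsilon_j(b)}$ aligned throughout; the conceptual content of the lemma reduces to the adjunction plus Lemma \ref{8.4}, with the standing hypothesis absorbing what would otherwise be a separate socle multiplicity-one argument.
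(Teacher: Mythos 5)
Your proof is correct and takes essentially the same approach as the paper: the paper's proof is a one-liner citing the induction-restriction adjunction $\Hom_{\C_{\nu+j}}(L(j)\circ N,M)\cong \Hom_{\C_\nu}(N,\Hom_{R(j)}(L(j),\Res_{j,\nu-j}M))$, and your argument is a careful unpacking of exactly that, with Lemma \ref{8.4} invoked to track the grading shifts and the standing hypothesis used to identify the (simple) socle.
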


\begin{proof}
 This is immediate from the adjunction isomorphism
 \[
  \Hom_{\C_{\nu+j}}(L(j)\circ N,M)\cong \Hom_{\C_\nu}(\Hom_{R(j)} (L(j),\Res_{j,\nu-j} M).
 \]\qedhere\end{proof}

The rest of this section is dedicated to showing that the operators $\tilde e_j$ and $\tilde f_j$ send $\sB$ to $\sB\sqcup\{0\}$.


The KLR algebra $R(j)$ is symmetric in the sense of \cite[Definition 1.3]{kkko}. In particular there is a $R$-matrix $r_{L_j,M}\map{L_j\circ M}{M\circ L_j}$ for every $R(\nu)$-module $M$. This is a homogeneous morphism whose degree is given by the following lemma:

\begin{lemma}\label{degr}
If $M$ is simple, then the degree of $r_{L_j,M}$ is $(\nu-\e_j(M)j)\cdot j$.
\end{lemma}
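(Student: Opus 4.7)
The plan is to unpack the construction of $r_{L_j,M}$ coming from the general theory of R-matrices for symmetric KLR algebras and then compute its degree by a combinatorial analysis combined with the control on vanishing given by $\e_j(M)$.

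First I would recall the construction. In the underlying (symmetric) KLR algebra $R(j+\nu)$, let $\sss\in S_{|j|+|\nu|}$ be the permutation that moves the first $|j|$ strands past the next $|\nu|$ strands. The element $\tau_\sss$ defines a natural bimodule homomorphism $L_j\circ M\to M\circ L_j$ whose degree is easily computed by summing $-i_a\cdot j_b$ over all $|j|\cdot|\nu|$ crossings, giving $-j\cdot\nu$. The actual R-matrix $r_{L_j,M}$ from \cite{kkko} is obtained from $\tau_\sss$ by an explicit renormalisation: one factors out a maximal power of a distinguished $y$-polynomial before the resulting morphism becomes nonzero. The degree of $r_{L_j,M}$ therefore equals the degree of $\tau_\sss$ shifted by the degree of the polynomial that was divided out.

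Second I would identify the vanishing order using the results of Section 8. By Lemma~\ref{3.8}, $\Res_{\e_j(M)j,\nu-\e_j(M)j}(M)\cong L(j^{\e_j(M)})\otimes X$ with $\e_j(X)=0$, so $\tau_\sss$ acting on $L_j\otimes M$ factors through the $\e_j(M)$ topmost $j$-strands of $M$. The interaction of $L_j$ with those $\e_j(M)$ many $j$-groups is governed by the one-colour computation of Section~\ref{onecolour}; the same vanishing phenomenon that makes $L(j^n)=q_j^{\binom{n}{2}}L(j)^{\circ n}$ self-dual (cf.\ Lemma~\ref{7.1}) forces $\tau_\sss$ to vanish to order exactly $\e_j(M)$ when one divides by the canonical $y$-difference on these orbit strands.

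Third, I would tally the grading shift. Each factor divided out has degree equal to $j\cdot j$ (the natural degree of a $y$-variable on a $j$-strand, summed over the orbit), so the total shift adds $\e_j(M)\cdot(j\cdot j)$, after accounting for signs, to $-j\cdot\nu$ and produces $(\nu-\e_j(M)j)\cdot j$ as required. The main obstacle is precisely this renormalisation step: carefully identifying the polynomial being divided out, verifying that its order of vanishing is controlled by $\e_j(M)$ and not merely bounded by it, and checking that the renormalised map is indeed the R-matrix used in the following sections. Once the vanishing order is pinned down (using Lemma~\ref{3.8} and the Mackey filtration applied to $\Res_{\nu,j}(L_j\circ M)$ and $\Res_{\nu,j}(M\circ L_j)$), the degree computation reduces to an addition of two explicit numbers.
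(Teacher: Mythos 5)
Your plan takes a genuinely different route from the paper. The paper reduces to the case $\epsilon_j(M)=0$, which it cites directly from [KKKO2, Proposition 4.18], and then inducts on $\epsilon_j(M)$: writing $M=\operatorname{hd}(L_j\circ N)$ for a simple $N$ with $\epsilon_j(N)<\epsilon_j(M)$ (Lemma~\ref{8.4}), the naturality square relating $r_{L_j,M}$ and $\mathrm{id}\circ r_{L_j,N}$ via the surjection $L_j\circ N\twoheadrightarrow M$ has degree-zero horizontal arrows, so the two R-matrices have equal degree. Your plan instead attempts to compute the degree from scratch by exhibiting $r_{L_j,M}$ as a renormalisation of an explicit braid element and pinning down the order of vanishing directly.

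There is a genuine gap in your second step. You assert that the order of vanishing is \emph{exactly} $\epsilon_j(M)$, but the only justification offered is an analogy with the self-duality computation of Section~\ref{onecolour}. That is precisely the hard content of the lemma, and appeal to analogy is not an argument: Lemma~\ref{3.8} and the Mackey filtration at best bound the order of vanishing on one side, and establishing the exact order is what [KKKO2, Proposition~4.18] achieves in the case $\epsilon_j(M)=0$. Moreover the degree bookkeeping in your third step does not survive inspection. The morphism from [KKKO] is constructed from the intertwiners $\varphi$ after passing to an affinisation, not from $\tau_\sss$ itself, so its degree is not $-j\cdot\nu$; and even taking your numbers at face value, $-j\cdot\nu + \epsilon_j(M)(j\cdot j) = -(\nu-\epsilon_j(M)j)\cdot j$, which is the \emph{negative} of what you need. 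The phrase ``after accounting for signs'' is a flag that something has not been nailed down, not a resolution. To make your approach work you would have to (a) use the actual KKKO intertwiner rather than $\tau_\sss$, (b) prove the exact vanishing order rather than assert it, and (c) redo the degree arithmetic carefully. The paper's inductive argument sidesteps (a) and (b) entirely, which is why it is the more robust route.
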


\begin{proof}
  If $\e_j(M)=0$, then this is \cite[Proposition 10.1.3]{kkko2}. To deal with the general case, we induct on $\e_j(M)$. So suppose that $\e_j(M)>0$. Then we can write $M=\hd (L_j\circ N)$ for some simple $N$. By Lemma \ref{8.4}, $\e_j(N)<\e_j(M)$ so by induction we can assume the result known for $N$. Consider the diagram
  \[  \begin{CD}
   L_j\circ M @<<< L_j\circ L_j\circ N\\
   @VVr_{L_j,M} V   @VV\mathrm{id}\circ r_{L_j,N} V \\
   M\circ L_j @<<< L_j\circ N\circ L_j.
  \end{CD}
 \]
 This diagram commutes and the degrees of the horizontal maps are zero so the degrees of the vertical maps must agree.
 \end{proof}

\begin{lemma}\label{fselfdual}
 If $(M,\sigma)\in \sB_\nu$ then  $\tilde f_j (M,\sigma)\in\sB_{\nu+j}$.
\end{lemma}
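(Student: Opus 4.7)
\textbf{Proof plan for Lemma \ref{fselfdual}.}

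The statement has two parts: that $\tilde f_j(M,\sigma)$ is simple, and that it is self-dual. Simplicity is immediate: applying Lemma \ref{8.4} with $N=M$ and $m=1$ shows that $L(j)\diamond M=\hd(L(j)\circ M)$ is irreducible in $\C_{\nu+j}$. So the entire content of the lemma is that the particular grading shift $q_j^{\epsilon_j(M)}$ used in the definition of $\tilde f_j$ makes the result self-dual.

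My strategy for self-duality has two ingredients. First, the $R$-matrix $r_{L_j,M}\colon L_j\circ M\to M\circ L_j$ (available because $R(j)$ is symmetric) provides a nonzero map whose image is simultaneously the irreducible head of $L_j\circ M$ and the irreducible socle of $M\circ L_j$; by Lemma \ref{degr} its degree is $(\nu-\epsilon_j(M)j)\cdot j$. Second, I need a twisted duality formula for induction, of the shape
\[
 \D(A\circ B)\;\cong\; q^{-(|A|\cdot|B|)}\,\D B\circ \D A,
\]
applied to $A=L(j)$ and $B=M$. In the unfolded case this is essentially \cite[\S 3]{khovanovlauda}; in the folded case it should follow by unwinding the $(R(\la+\mu),R(\la)\otimes R(\mu))$-bimodule structure on $R(\la+\mu)$, the compatibility of $\psi$ with this bimodule, and the definition $\D(M,\sigma)=(\D M,(\D\sigma)\inv)$ — so this is a formal step with no new input beyond the unfolded case plus the equivariance. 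The self-duality of $L(j)$ itself I verify directly from the explicit description: $L_j$ is concentrated in a single degree and $\sigma$ acts by a permutation matrix on the basis $\{[w]\}$, which is intrinsically $\psi$-invariant.

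Combining these ingredients: $\D(L(j)\diamond M)$ is the socle of $\D(L(j)\circ M)$, which by the duality formula together with $\D M\cong M$ and $\D L(j)\cong L(j)$ is a grading shift of the socle of $M\circ L(j)$. The $R$-matrix identifies that socle with a grading shift of the head of $L(j)\circ M$, i.e.\ of $L(j)\diamond M$ itself. Tracking all shifts, the expected outcome is
\[
 \D\bigl(L(j)\diamond M\bigr)\;\cong\; q^{2d_j\epsilon_j(M)}\,L(j)\diamond M,
\]
which is exactly what is needed for $q_j^{\epsilon_j(M)}L(j)\diamond M=q^{d_j\epsilon_j(M)}L(j)\diamond M$ to be self-dual in $\C_{\nu+j}$. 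The main obstacle is precisely the bookkeeping in this last computation: one must correctly combine the degree $(\nu-\epsilon_j(M)j)\cdot j$ from Lemma \ref{degr} with the shift $-j\cdot \nu$ coming from the twisted induction duality, and verify that the surviving shift on $L(j)\diamond M$ is exactly $2d_j\epsilon_j(M)$. Everything else in the argument is formal and reduces to inputs already established.
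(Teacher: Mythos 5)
Your plan follows the same strategy as the paper's proof: use the $R$-matrix $r_{L_j,M}$ to identify the head of $L(j)\circ M$ with the socle of $M\circ L(j)$, combine this with a duality formula for $\circ$ and the self-duality of $L(j)$ and $M$, and then read off the correct grading shift from Lemma \ref{degr}. The degree bookkeeping you describe is the same computation the paper does (modulo your sign convention for the degree of the canonical isomorphism $\D(L(j)\circ M)\cong\D M\circ \D L(j)$, which is the opposite of the paper's but lands on the same answer $2d_j\epsilon_j(M)$).

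However, there is a genuine gap that your plan glosses over, and it is exactly the part of the argument that is new in the folded setting. All of the identifications you invoke --- the duality formula, the self-duality of $M$ and $L(j)$, and most importantly the $R$-matrix isomorphism between head and socle --- need to be shown to be morphisms in the folded category $\C_{\nu+j}$, not merely isomorphisms of $R(\nu+j)$-modules. You flag this for the duality formula as a ``formal step'', but you do not address it at all for the $R$-matrix. If one only knows that $\D(L(j)\diamond M)\cong q^{2d_j\epsilon_j(M)}L(j)\diamond M$ as $R(\nu+j)$-modules, then by the classification in Theorem \ref{classifysimples} one can only conclude $\D(\tilde f_j M)\cong\zeta\,\tilde f_j M$ for some $n$-th root of unity $\zeta$; the content of the lemma is precisely that $\zeta=1$. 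The paper handles this with a two-row commutative diagram: it constructs the candidate isomorphism $X\colon L_j\diamond M\to\D(L_j\diamond M)$ as the unique map making the top row compose to $r_{L_j,M}$, pulls the whole row back under $a^*$, and then verifies that the square involving $X$ commutes because all the other squares do. That diagram chase is the substance of the equivariance check, and your plan as written does not supply it.

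One smaller remark: the twisted duality formula $\D(A\circ B)\cong q^{-|A|\cdot|B|}\D B\circ\D A$ you want to invoke is not stated in the paper for general (non-projective) modules; the paper's Lemma \ref{lem:dualproj} only covers projectives and even there no grading shift is recorded. In the actual proof this formula appears only as one arrow in the diagram (the ``canonical isomorphism'' of degree $\nu\cdot j$), so you would either need to establish your general formula or, like the paper, use the specific isomorphism for $L(j)\circ M$ as an ingredient of the diagram rather than as a standalone lemma.
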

%

\begin{proof}
 By \cite[Theorem 3.2]{kkko}, if $M$ is simple, then the image of the $R$-matrix can be identified with both the head of $L(j)\circ M$ and the socle of $M\circ L(j)$, and furthermore this image is simple. Consider the diagram:
%

\centerline{\xymatrix@C=0.31	cm{
&  L_j\circ M \ar@/^2.0pc/@[red][rrrrr]^{r_{L_j,M}} \ar@{->>}[r]
& L_j\diamond M \ar@{.>}[r]^-{X} 
  & \D(L_j\diamond M)  \ar@{^{(}->}[r] 
  & \D(L_j\circ M) \ar[r]
& \D M\circ \D L_j\ar[r] & M\circ L_j \\
& a^*(L_j\circ M) \ar@{->>}[r]\ar[u]\ar@/^-2.0pc/@[red][rrrrr]^{a^*r_{L_j,M}} 
  & a^*(L_j\diamond M)\ar@{.>}[r] \ar[u]
  & a^*\D(L_j\diamond M) \ar@{^{(}->}[r] \ar[u] 
& a^*\D(L_j\circ M)\ar[r] \ar[u] & a^*\D M\circ a^* \D L_j
\ar[r] \ar[u] & a^*(M\circ L_j) \ar[u]
}}

The unlabelled morphisms in the top row are, from left to right, the canonical surjection, the canonical inclusion, the canonical isomorphism, and the circle product of the morphisms $f\map{\D M}{M}$ and $g\map{\D L_j}{L_j}$ which exhibit isomorphisms $(M,\sigma)\cong (\D M,(\D\sigma)\inv)$ and $(L_j,\sigma_j)\cong (\D(L_j),(\D\sigma_j)\inv)$.
There exists a unique morphism $X\map{L_j\diamond M}{\D(L_j\diamond M})$ making the top row commute. Each morphism on the bottom row is the pullback of the morphism on the top row under $a^*$.

We have to show the commutativity of the square
\[  \begin{CD}
   L_j\diamond M @>X>> \D(L_j\diamond M)\\
   @AA\sigma' A   @AA\D(\sigma')\inv A  \\
   a^*(L_j\diamond M) @>a^*X>> a^*\D(L_j\diamond M).
  \end{CD}
 \]
This square commutes because all other squares in the large diagram above are already known to commute.

To check the grading shift, note that the degree of the isomorphism $\D(L_j\circ M)\cong \D M \circ \D L_j$ is $\nu\cdot j$. The degree of $r_{L_j,M}$ is $(\nu-\e_j(M)j)\cdot j$ by Lemma \ref{degr}. Therefore the degree of the isomorphism $\D(L_j\diamond M)\cong L_j \diamond M$ is $\e_j(M)j\cdot j$. Thus a degree shift of $q_j^{\e_j(M)}$ is required to get a self-dual object.
\end{proof}

\begin{lemma}
If $M\in \sB_\nu$ then $\tilde e_j (M)\in \sB_{\nu-j}\cup\{0\}$.
\end{lemma}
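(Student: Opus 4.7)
The case $\epsilon_j(M)=0$ is trivial since then $\Res_{j,\nu-j} M = 0$, forcing $\tilde e_j M = 0$. So I assume $m := \epsilon_j(M)\geq 1$. The plan is to exhibit an explicit self-dual simple candidate $N\in\sB_{\nu-j}$, identify $\tilde e_j M$ with $N$ via tensor-hom adjunction, and check that the grading shifts line up, following a folded version of the Lauda-Vazirani argument.

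First I would extract a self-dual complement of $L(j^m)$ in the restriction of $M$. Lemma \ref{3.8} produces a simple $X \in \C_{\nu-mj}$ with $\epsilon_j(X)=0$ and $\Res_{mj,\nu-mj} M \cong L(j^m)\otimes X$. Since $\D$ commutes with $\Res$ (both are applications of $e_{\lambda\mu}\cdot$) and $L(j^m)$ is self-dual in $\C_{mj}$, self-duality of $M$ propagates to $X$ after an appropriate grading shift, so I may take $X\in\sB_{\nu-mj}$. Then I would set $N := \tilde f_j^{\,m-1} X$: by Lemma \ref{fselfdual} every iterate stays in $\sB$, and combining Lemma \ref{8.4} with the one-colour identity $L(j)\diamond L(j^k) = L(j^{k+1})$ up to a $q_j$-shift (read off from Theorem \ref{7.3}) identifies $N$ with $L(j^{m-1})\diamond X$ up to an explicit shift, with $\tilde f_j N = M$.

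The main step is matching $\tilde e_j M$ with $N$. By tensor-hom adjunction, any simple submodule $N'\hookrightarrow \Hom_{R(j)}(L(j),\Res_{j,\nu-j}M)$ corresponds to a nonzero (hence surjective) morphism $L(j)\circ N'\twoheadrightarrow M$. Lemma \ref{8.4} forces $\epsilon_j(N')=m-1$, and applying Lemma \ref{3.8} to $N'$ writes $N'\cong L(j^{m-1})\diamond X'$ for some simple $X'$; substituting into $M=\hd(L(j)\circ N') = L(j^m)\diamond X'$ and comparing with $M = L(j^m)\diamond X$ gives $X'\cong X$ and so $N'\cong N$, up to grading shift. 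A dimension count $\Hom(N,\Hom_{R(j)}(L(j),\Res_{j,\nu-j}M))\cong \Hom(L(j)\circ N,M)\cong \End(M)=k$ in degree zero (using Theorem \ref{classifysimples} for $\End(M)=k$) then shows the socle is exactly one copy of $N$, concentrated in a single degree.

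The technical obstacle is grading bookkeeping: one must verify that the factor $q_j^{1-m}$ in the definition of $\tilde e_j$ cancels precisely the grading shift picked up by this socle, so that $\tilde e_j M$ is self-dual rather than merely a shift of one. This is a finite calculation paralleling Lemma \ref{fselfdual}, using Lemma \ref{degr} to track the degree of the $R$-matrix $r_{L_j,M}$, and is designed to be compatible with the shift $q_j^{\epsilon_j(M)}$ appearing in $\tilde f_j$; once that compatibility is confirmed, the self-duality of $\tilde e_j M$ is inherited from the self-duality of $N$ constructed in the second paragraph. The folding automorphism $a$ enters only through the propagation of self-duality from $M$ to $X$ and through the identification $\End(M)=k$ from Theorem \ref{classifysimples}; the rest of the argument is formally identical to the unfolded case.
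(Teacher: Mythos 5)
Your argument reaches the correct conclusion and the identification step (using adjunction, Lemma \ref{8.4}, and Lemma \ref{3.8} to pin down $\tilde e_j M$ as a specific simple) is essentially the paper's. The genuine difference is in how you handle self-duality. You build the candidate $N$ \emph{from below}: you first argue that $X$ (the factor in $\Res_{mj,\nu-mj}M\cong L(j^m)\otimes X$) inherits self-duality from $M$, then climb back up with $N=\tilde f_j^{\,m-1}X$, using Lemma \ref{fselfdual} at each step. The paper instead works \emph{from above}: it takes $N=\tilde e_j M$, invokes Theorem \ref{classifysimples} to say that $N\cong\zeta\,\D N$ for some root of unity $\zeta$, and then runs the large commutative diagram from the proof of Lemma \ref{fselfdual} in reverse --- since $M\cong\tilde f_j N$ it deduces $M\cong\zeta\,\D M$, and self-duality of $M$ forces $\zeta=1$. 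The paper's route neatly sidesteps the two pieces of grading bookkeeping that you defer (the shift needed to make $X$ self-dual, and the verification that $q_j^{1-m}$ matches the shifts accumulated by $\tilde f_j^{m-1}$): all the $q$-power matching is already encoded in the statement $N\cong\zeta\,\D N$ (which uses the unfolded Lauda--Vazirani fact that simples over $R(\nu)$ are self-dual up to shift), so the only remaining ambiguity is the scalar $\zeta$, and that is resolved in one line. Your route is also valid, but requires carrying out the deferred grading calculation and the self-duality transfer $M\rightsquigarrow X$, both of which are more work than the paper needs; what it buys you is a more explicit description of $\tilde e_j M$ as a $\diamond$-product, which the paper does not produce.
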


\begin{proof}
Assume that $\e_j(M)\neq 0$, so in particular $e_j^*(M)>0$. Let $N$ be a simple direct summand of $\e_j(M)$. Then by adjunction, $M$ is a quotient of $L(j)\circ N$. 
Let $m=\e_j(M)$. Then Lemma \ref{3.8} finds a simple $X$ such that
\[
 \Res_{mj,\nu-mj}(M)\cong L(j^m)\otimes X.
\]
As the restriction functor is exact, this is a quotient of $\Res_{mj,\nu-mj} (L(j)\circ N)$.

By Lemma \ref{8.4}, $\e_j(N)=m-1$. Therefore in the Mackey filtration of $\Res_{mj,\nu-mj} (L(j)\circ N)$, there is only one nonzero term, which involves considering $\Res_{(m-1)j,\nu-mj}N$. From Lemma \ref{3.8}, we have 
$$\Res_{(m-1)j,\nu-mj}N\cong L(j^{m-1})\otimes Y$$ for some simple $Y$.

We therefore have $X\cong Y$. Since $N$ can be recovered as the head of $L(j^{m-1})\circ Y$, this means that $N$ is uniquely determined by $M$.

To show that this simple summand $N$ has multiplicity one in $\tilde e_j (M)$ follows from the adjunction formula
\[
  \Hom_{\C_{\nu+j}}(L(j)\circ N,M)\cong \Hom_{\C_\nu}(\Hom_{R(j)} (L(j),\Res_{j,\nu-j} M).
 \]
Therefore $\tilde e_j (M)$ is simple. We hereafter denote it by $N$.

 By the classification of irreducibles in Theorem \ref{classifysimples}, there exists an isomorphism $N\cong \zeta \D N$ for some root of unity $\zeta$. The large commutative diagram in the proof of the previous Lemma now shows that as $M\cong \tilde f_i N$, we have $M\cong \zeta\D M$. Since $M$ is assumed self-dual, $\zeta=\pm 1$, completing the proof.
\end{proof}

\section{The Crystal}\label{sec:crystal}

We define
a crystal for the Cartan datum $(J,\cdot)$ to be a set $B$ together with maps $\tilde{e}_i,\tilde{f}_i\map{B}{B\sqcup\{0\}}$, $\e_i,\phi_i\map{B}{\Z}$ for all $i\in J$, and $\wt\map{B}{\Z J}$,  satisfying the conditions
\begin{enumerate}
 \item $\phi_i(b)=\e_i(b)+\langle h_i,\wt(b)\rangle$
 \item If $b\in B$ and $\tilde e_i(b)\in B$ then $\wt(\tilde e_i(b))=\wt(b)+\a_i$, $\e_i(\tilde e_ib)=\e_i(b)-1$ and $\phi_i(\tilde e_ib)=\phi_i(b)+1$.
 \item If $b\in B$ and $\tilde f_i(b)\in B$ then $\wt(\tilde f_i(b))=\wt(b)-\a_i$, $\e_i(\tilde f_ib)=\e_i(b)+1$ and $\phi_i(\tilde f_ib)=\phi_i(b)-1$.
 \item If $b,b'\in B$, then $b'=\tilde e_i b$ if and only if $b=\tilde f_ib'$.
\end{enumerate}

There are more general notions of a crystal in the literature which allow $\e_i$ and $\phi_i$ to take the value $-\infty$. Also the image of $\wt$ is usually allowed to land in the entire weight lattice, as opposed to $\Z J$. Since we do not come across these crystals here, we shall ignore them.

\begin{definition}
 A crystal $B$ is highest weight if
 \begin{enumerate}
  \item It has a distinguished element $b_0$ such that every $b\in B$ can be reached from $b_0$ by applying the operators $\t f_j$ for $j\in J$.
  \item For all $b\in B$ and $j\in J$, $\e_j(b)=\max\{n\mid \t e_j^n(b)\neq 0\}$.
 \end{enumerate}
\end{definition}

The following result allows us to identify a crystal as $B(\infty)$ (the crystal which is a combinatorial skeleton of a Verma module). It is equivalent to the criterion of Kashiwara and Saito \cite[Proposition 3.2.3]{kashiwarasaito}, and we sketch a proof of this equivalence later in this section.

\begin{theorem}\cite[Proposition 1.4]{tingleywebster}\label{criterion}
 Assume $(B,\tilde e_i,\tilde f_i)$ and $(B,\tilde e_i^*,\tilde f_i^*)$ are highest weight crystals with the same highest weight element $b_0$ of weight 0. Suppose that the following conditions are satisfied for all $b \in B$ and all distinct pairs $i,j\in J$:
 \begin{enumerate}
 \item 	$\t f_i(b)\neq 0$, $\t f^*_i(b)\neq 0$,
 \item $\t f_i \t f_j^*(b) = \t f_j^* \t f_i (b)$,
 \item $\e_i(b)+\e_i^*(b)+ \langle \operatorname{wt}(b),\a_i^\vee\rangle\geq 0$,
\item If $\e_i(b)+\e_i^*(b)+ \langle \operatorname{wt}(b),\a_i^\vee\rangle=0$, then $\t f_i(b)=\t f_i^*(b)$.
\item If $\e_i(b)+\e_i^*(b)+ \langle \operatorname{wt}(b),\a_i^\vee\rangle\geq 1$, then $\e_i^*(\t f_i(b))=\e_i^*(b)$ and $\e_i(\t f_i^*(b))=\e_i(b)$.
\item If $\e_i(b)+\e_i^*(b)+ \langle \operatorname{wt}(b),\a_i^\vee\rangle\geq 2$, then $\t f_i \t f_i^*(b) = \t f_i^* \t f_i (b)$.
 \end{enumerate}
Then $(B,\tilde e_i,\tilde f_i)\cong(B,\tilde e_i^*,\tilde f_i^*)\cong B(\infty)$. Furthermore, if $*$ is the Kashiwara involution, then $\tilde e_i^*=*\tilde e_i *$ under these identifications.
\end{theorem}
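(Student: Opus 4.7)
The plan is to reduce this to the Kashiwara-Saito characterisation of $B(\infty)$ from \cite{kashiwarasaito}. The version of that criterion stated in loc.~cit.~assumes one is already given an involution $*\map{B}{B}$ which exchanges the two crystal structures; the content of Theorem \ref{criterion} is that conditions (1)--(6) suffice to produce such a $*$. So the plan is: manufacture the involution $*$ from the data in the theorem, verify that it conjugates $(\tilde e_i,\tilde f_i)$ into $(\tilde e_i^*,\tilde f_i^*)$, and then invoke \cite[Proposition 3.2.3]{kashiwarasaito}.

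I would define $*\map{B}{B}$ by induction on the height of $-\wt(b)$. Set $*(b_0)=b_0$. For $b\neq b_0$, by the highest weight property for $(B,\tilde e_i,\tilde f_i)$ there exists some $i$ with $\tilde e_i(b)\neq 0$, and I put $*(b):=\tilde f_i^*(*(\tilde e_i(b)))$, which is defined inductively and is nonzero by (1). The first and most serious obstacle is well-definedness: if $\tilde e_i(b)\neq 0$ and $\tilde e_j(b)\neq 0$ with $i\neq j$, then writing $b'=\tilde e_i(b)$ and $b''=\tilde e_j(b)$, one must verify $\tilde f_i^*(*b')=\tilde f_j^*(*b'')$. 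Using condition (2) (commutation of $\tilde f_i$ with $\tilde f_j^*$ for distinct $i,j$), together with the induction hypothesis applied at the smaller element $\tilde e_i \tilde e_j(b)$, this reduces to computing $\tilde f_i^*\tilde f_j^*(*\tilde e_i \tilde e_j(b))=\tilde f_j^*\tilde f_i^*(*\tilde e_j \tilde e_i(b))$, which is symmetric and hence consistent. Meanwhile, if the same $i$ is used for two distinct preimage chains, injectivity of $\tilde f_i$ makes the issue moot.

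One must next confirm (a) $*$ swaps $\tilde f_i$ with $\tilde f_i^*$ (immediate from the inductive definition for the direction $*\tilde f_i=\tilde f_i^* *$ by applying $*$ to $b=\tilde f_i(\tilde e_i(b))$), (b) $*$ is an involution, and (c) the weight functions and $\e_i$ values transport correctly. Part (b) is where conditions (3)--(6) do the work: when $\e_i(b)+\e_i^*(b)+\langle\wt(b),\a_i^\vee\rangle=0$, condition (4) forces $\tilde f_i=\tilde f_i^*$ on the relevant element, so the two inductive recipes agree; when the quantity is $\geq 1$, condition (5) guarantees that $\e_i^*$ and $\e_i$ behave as if $\tilde f_i$ and $\tilde f_i^*$ swap under $*$; and when it is $\geq 2$, condition (6) provides the $\tilde f_i\tilde f_i^*=\tilde f_i^*\tilde f_i$ needed to close up the induction. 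Condition (3) ensures the three cases exhaust all possibilities. Once these compatibilities are in hand, applying $*$ twice to $b=\tilde f_{i_1}\cdots \tilde f_{i_k}(b_0)$ returns $b$, so $*$ is an involution.

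With $*$ in place, $(B,\tilde e_i,\tilde f_i)$ is a highest weight crystal of highest weight $0$ equipped with a weight-preserving involution that intertwines $\tilde f_i$ with $\tilde f_i^*$, and conditions (2)--(6) translate into precisely the hypotheses of \cite[Proposition 3.2.3]{kashiwarasaito}. That result then gives $(B,\tilde e_i,\tilde f_i)\cong B(\infty)$, and since the Kashiwara involution on $B(\infty)$ is characterised by the property that it intertwines $\tilde f_i$ and $\tilde f_i^*$, the constructed $*$ corresponds to it under the isomorphism. Applying $*$ to both sides gives $(B,\tilde e_i^*,\tilde f_i^*)\cong B(\infty)$ as well, and the final statement $\tilde e_i^* = * \tilde e_i *$ is automatic from the construction. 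I expect the well-definedness of $*$, requiring a careful case-by-case use of (2) and (4)--(6) indexed by the sign of $\e_i(b)+\e_i^*(b)+\langle\wt(b),\a_i^\vee\rangle$, to be the main technical hurdle.
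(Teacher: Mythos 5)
Your approach is genuinely different from the paper's, and as written it has gaps that are not easy to close.

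The paper's proof does not try to manufacture the involution $*$. Instead it constructs, for each $i$, the strict embedding $\Phi_i\map{B}{B\otimes B_i}$ that the Kashiwara--Saito criterion is literally stated in terms of, and the key tool for that is Lemma \ref{bt}: conditions (1)--(6) force every $\langle\t e_j,\t f_j,\t e_j^*,\t f_j^*\rangle$-orbit to be isomorphic to one of the model $\mathfrak{sl}_2$ bicrystals $B_t$, and this local structure is exactly what makes $\Phi_i(b)=\t e_i^{*\,\e_i^*(b)}(b)\otimes \t f_i^{\e_i^*(b)}b_0$ well defined and a strict embedding. The existence of $*$ is then a consequence of $B\cong B(\infty)$, not an input. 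You have the logic reversed: you say that \cite[Proposition 3.2.3]{kashiwarasaito} presupposes a $*$-involution exchanging the two crystal structures, but that result is formulated in terms of the family $\Phi_i$, with no $*$ in its hypotheses; so even if you succeeded in building $*$, you would still owe a translation from $*$ to the $\Phi_i$, which you only assert.

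Inside your construction of $*$ there are two concrete gaps. First, well-definedness: when $\t e_i b\neq 0$ and $\t e_j b\neq 0$ for $i\neq j$, you want to pass to the smaller element $\t e_i\t e_j b$, but nothing in conditions (1)--(6), nor in the bare highest-weight hypothesis, guarantees $\t e_i\t e_j b\neq 0$. This is precisely the kind of local confluence statement that Lemma \ref{bt} (together with the $i\neq j$ commutation (2)) is there to provide, and without it the inductive comparison of the two recipes does not get off the ground. Second, involutivity: your inductive rule descends via $\t e_i$ and ascends via $\t f_i^*$, so $*(b)=\t f_{i_1}^*\cdots\t f_{i_k}^*(b_0)$ when $b=\t f_{i_1}\cdots\t f_{i_k}(b_0)$; applying $*$ again requires descending from $*(b)$ via some $\t e_j$, and there is no reason the sequence of $\t e_j$'s you hit mirrors the original $\t f_{i_\ell}^*$'s. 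The claim that conditions (3)--(6) make this "close up" is exactly the content of the $B_t$-local analysis in Lemma \ref{bt}, but you never isolate or prove that local statement. In short, your route can probably be made to work, but only after reproving a form of Lemma \ref{bt} along the way, at which point you have essentially reconstituted the paper's argument; as submitted, the well-definedness and involutivity steps are asserted rather than established.
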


Let $t$ be a natural number. We define a highest weight $\mathfrak{sl_2}$ bicrystal $B_t$. Let
\[
 B_t=\{(a,b,c)\in\N^3\mid a+b+c=t\}\cup \{(a,b,0)\in\N^3\mid t-|a-b|\in 2\N\}
\]

The operators $\t f$ and $\t f^*$ act on $B_t$ by the formulae
\[
 \t f(a,b,c)=\begin{cases}
                 (a+1,b,c-1)\quad\text{if $c>0$} \\
                 (a+1,b+1,0)\quad\text{if $c=0$}
                \end{cases} \]
    \[            \t f^*(a,b,c)=\begin{cases}
                                  (a,b+1,c-1)\quad\text{if $c>0$} \\
                                  (a+1,b+1,0)\quad\text{if $c=0$}
                                 \end{cases}
\]

In this crystal, $\epsilon(a,b,c)=a$, $\e^*(a,b,c)=b$ and the weight is given by $$\e(a,b,c)+\e^*(a,b,c)+\langle \operatorname{wt}(a,b,c),\a_i^\vee\rangle=c.$$

The following lemma is straightforward:

\begin{lemma}\label{bt}
 Suppose $(B,\t e_j,\t f_i,\t e_i^*,\t f_i^*)$ satisfies the conditions of Theorem \ref{criterion}.
 Fix $j\in J$. Then any subset of $B$ generated by a single $b\in B$ and the operators $\t e_j$, $\t f_j$, $\t e_j^*$ and $\t f_j^*$ is isomorphic to $B_t$ for some natural number $t$.
\end{lemma}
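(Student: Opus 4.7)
The plan is to associate to each $b$ in the generated subset the triple $T(b) := (\epsilon_j(b), \epsilon_j^*(b), c(b))$, where $c(b) := \epsilon_j(b) + \epsilon_j^*(b) + \langle \wt(b), \alpha_j^\vee\rangle$, and to exhibit this triple as the coordinates of an isomorphism with $B_t$ for a suitable $t$. Condition (3) of Theorem~\ref{criterion} ensures $c(b) \geq 0$. First I would compute from hypotheses (3)--(6) the effect of each operator on $T$. When $c(b) \geq 1$: condition (5) fixes $\epsilon_j^*$ under $\tilde f_j$, while the crystal axioms give $\epsilon_j \mapsto \epsilon_j + 1$ and $\wt \mapsto \wt - \alpha_j$, forcing $c \mapsto c - 1$; the symmetric statement holds for $\tilde f_j^*$. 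When $c(b) = 0$: condition (4) equates $\tilde f_j(b) = \tilde f_j^*(b)$, and this common image has both $\epsilon_j$ and $\epsilon_j^*$ incremented by one with $c$ still zero. The operators $\tilde e_j$ and $\tilde e_j^*$ invert the above. These transitions match exactly the combinatorial formulas defining $B_t$.

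Next I would locate a distinguished source. Starting from $b$, applying $\tilde e_j$ exactly $\epsilon_j(b)$ times reaches $b'$ with $\epsilon_j(b') = 0$; applying $\tilde e_j^*$ exactly $\epsilon_j^*(b')$ times reaches $b_*$. A brief case check shows $\tilde e_j^*$ cannot revive $\epsilon_j$ from zero: when $c \geq 1$ condition (5) protects $\epsilon_j$, and the case $c = 0$ with $\epsilon_j = 0$ is unreachable via $\tilde e_j^*$ since its preimage under $\tilde f_j^*$ would need $\epsilon_j < 0$. Set $t := c(b_*) \in \N$, so that $T(b_*) = (0, 0, t)$ corresponds to the highest weight element of $B_t$.

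Finally I would construct $\Phi \colon B_t \to \text{orbit}(b)$ via canonical words in $\tilde f_j, \tilde f_j^*$ applied to $b_*$. For $(a, b, c) \in B_t$ in the first piece, set $\Phi(a, b, c) := \tilde f_j^{\,a} (\tilde f_j^*)^b(b_*)$; for a second-piece element $(a, b, 0)$ with $a + b > t$, write $\alpha = (t + a - b)/2$, $\beta = (t + b - a)/2$, $k = (a + b - t)/2$, and set $\Phi(a, b, 0) := \tilde f_j^{\,k} \tilde f_j^{\,\alpha} (\tilde f_j^*)^\beta(b_*)$. Because each $\tilde e$-operator undoes some $\tilde f$-operator, the orbit of $b$ equals the orbit of $b_*$ under just the two $\tilde f$-operators, yielding surjectivity. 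Injectivity and compatibility with all four operators follow directly from the transition rule computation, since the triple uniquely recovers the canonical word.

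The main technical obstacle is the well-definedness of $\Phi$: any two words in $\tilde f_j, \tilde f_j^*$ from $b_*$ that reach the same triple must yield the same element. Inside the region $c \geq 2$, this is precisely condition (6), allowing the two generators to be freely permuted. At the boundary $c = 0$, condition (4) identifies $\tilde f_j$ with $\tilde f_j^*$, matching the collapse $(a, b, 0) \mapsto (a+1, b+1, 0)$ effected by either operator. The intermediate case $c = 1$ requires no commutation argument, since the two available moves produce distinct triples. These observations together give the required crystal isomorphism $B \supseteq \text{orbit}(b) \cong B_t$.
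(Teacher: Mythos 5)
The paper gives no proof of this lemma, declaring it ``straightforward.'' Your proof correctly fills in the details, and the strategy (track the triple $(\epsilon_j,\epsilon_j^*,c)$, show its transitions under the four operators exactly reproduce the $B_t$ rules via conditions~(3)--(6), locate a source $b_*$ with triple $(0,0,t)$, and then match words in $\tilde f_j,\tilde f_j^*$ with elements of $B_t$) is the natural one.

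Two small points of precision worth tightening. First, your claim that ``the orbit of $b$ equals the orbit of $b_*$ under just the two $\tilde f$-operators, yielding surjectivity'' is asserted before you have established well-definedness of $\Phi$, but the argument that the $\tilde f$-orbit of $b_*$ is already closed under $\tilde e_j,\tilde e_j^*$ is exactly the compatibility of $\Phi$ with the raising operators, which requires the well-definedness you prove only afterward; reorder so that well-definedness comes first. Second, in the well-definedness discussion, the phrase ``the intermediate case $c=1$ requires no commutation argument'' should be expanded slightly: the point is that in a word of length $\ell=t-c(\text{target})$ starting at $c=t$, any adjacent swap at position $i$ happens at $c$-value $t-i\geq t-(\ell-2)=c(\text{target})+2\geq2$, so condition~(6) always applies where a swap is needed (and once $c$ hits $0$, condition~(4) makes the two operators coincide). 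Noting this explicitly removes any appearance of a gap around the $c=1$ step. Separately, you may wish to observe that the paper's displayed definition of $B_t$ appears to omit the constraint $a+b\geq t$ on the second piece; your $\Phi$ correctly targets the connected component of $(0,0,t)$, which is what the lemma requires.

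Finally, injectivity of $\Phi$ is indeed immediate from $T\circ\Phi=\mathrm{id}_{B_t}$ (which follows from the transition-rule computation), so the one-line assertion there is fine. Overall the argument is sound.
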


\begin{proof}[Proof of Theorem \ref{criterion}]
The criterion of
 \cite[Proposition 3.2.3]{kashiwarasaito} is in terms of a strict embedding of crystals $\Phi_i\map{B}{B\otimes B_i}$. It is straightforward from Lemma \ref{bt} to compute that such a strict embedding exists and satisfies the necessary properties to allow us to apply the Kashiwara-Saito criterion and complete the proof.
\end{proof}

 The following lemma is the folded version of \cite[Proposition 7.1(ii)]{laudavazirani} and has the same proof.
 \begin{lemma}\label{7.2ii}
  Suppose that $M\in \sB$ is such that $\e_i^*(\t  f_i M)=\e_i^*(M)=c$. Then $(\t e_i)^c \t f_i M = \t f_i (\t e_i)^c M$.
 \end{lemma}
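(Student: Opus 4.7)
The plan is to mirror the proof of \cite[Proposition 7.1(ii)]{laudavazirani}, translating the claim into the combinatorial bicrystal model $B_t$. First I would apply Lemma \ref{bt} to identify the sub-bicrystal of $\sB$ generated by $M$ under the four operators $\tilde{e}_i,\tilde{f}_i,\tilde{e}_i^*,\tilde{f}_i^*$ with the standard bicrystal $B_t$ for some $t\in\N$. Under this identification, $M$ corresponds to a triple $(a,c,d)$ in the first part of $B_t$, with $a=\epsilon_i(M)$, $c=\epsilon_i^*(M)$, and $a+c+d=t$; here I use the compatibility of the crystal isomorphism with $\epsilon_i$ and $\epsilon_i^*$, which is part of the content of the proof of Theorem \ref{criterion}.

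The next step is to read off the combinatorial meaning of the hypothesis $\epsilon_i^*(\tilde{f}_iM)=c$. The explicit formula for $\tilde{f}_i$ gives $(a+1,c,d-1)$ when $d>0$ (with second coordinate still $c$), and $(a+1,c+1,0)$ when $d=0$ (with second coordinate $c+1$); only the former is compatible with the hypothesis. Hence the hypothesis forces $d>0$, and in particular $\tilde{f}_iM=(a+1,c,d-1)$ remains in the first part of $B_t$.

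With $d>0$ secured, all the relevant applications of $\tilde{e}_i^{(*)}$ and $\tilde{f}_i$ act via the generic (interior) formulas in $B_t$ and never trigger the boundary $c=0$ rule. A direct coordinate-by-coordinate computation then shows that both sides of the claimed identity yield the same element of $B_t$ (of the form $(a+1,\,0,\,c+d-1)$ in the appropriate reading), so the identity follows.

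The main obstacle is the interplay between the two parts of $B_t$: without the hypothesis, the $c=0$ branch of $\tilde{f}_i$ could be triggered in one composition but not the other, producing different answers. The strength of the hypothesis is that it was designed precisely to keep the whole computation in the first part of $B_t$, where the formulas for $\tilde{e}_i^{(*)}$ and $\tilde{f}_i$ are mutual inverses in the evident sense, reducing the lemma to routine bookkeeping.
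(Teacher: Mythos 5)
Your proposed route is circular. You invoke Lemma \ref{bt} to identify the sub-bicrystal of $\sB$ generated by $M$ under $\tilde e_i,\tilde f_i,\tilde e_i^*,\tilde f_i^*$ with the model bicrystal $B_t$. But Lemma \ref{bt} is stated under the hypothesis that the bicrystal satisfies \emph{all} the conditions of Theorem \ref{criterion}, including condition (6). Lemma \ref{7.2ii} exists precisely so that it can be applied (twice) in the proof of Proposition \ref{prop:heart} to \emph{verify} condition (6) for $\sB$. At the point where Lemma \ref{7.2ii} is needed, one does not yet know that $\sB$ satisfies condition (6), so one cannot yet conclude that the $\{\tilde e_i,\tilde f_i,\tilde e_i^*,\tilde f_i^*\}$-orbit of $M$ looks like $B_t$; indeed the commutation $\tilde f_i\tilde f_i^*=\tilde f_i^*\tilde f_i$ away from the boundary, which is exactly what makes $B_t$ the right model, is part of what is still to be shown. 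Assuming the $B_t$ picture here would be assuming the substance of what one is trying to establish.

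The actual argument has to be module-theoretic, not combinatorial. The paper's proof (following Lauda--Vazirani, whose proof you cite as your model but do not in fact follow) sets $N=(\tilde e_i)^c M$, realises $\tilde f_i M$ as a quotient of $L(i)\circ N\circ L(i^c)$, applies the exact functor $\Res_{\nu+i,ci}$, and uses the Mackey filtration together with $\e_i^*(N)=0$ to identify which subquotients can survive. The key step is that $\e_i^*\bigl((\tilde e_i)^c\tilde f_i M\bigr)=0$ rules out a contribution from $N\circ L(i)\otimes L(i^c)$, forcing a surjection $L(i)\circ N\twoheadrightarrow(\tilde e_i)^c\tilde f_i M$, whence $(\tilde e_i)^c\tilde f_i M\cong\tilde f_i N$ because $\tilde f_i N$ is the unique irreducible quotient of $L(i)\circ N$. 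Nothing in this argument presupposes the $B_t$ structure; that is what makes it usable as an input to Proposition \ref{prop:heart}. Your internal $B_t$ computation (that the hypothesis forces $d>0$, etc.) is correct as a consistency check after the fact, but it cannot serve as a proof here.
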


\begin{proof}
 Let $N= (\t e_i)^c M$. Then $\t f_i M$ is a quotient of $L(i)\circ N \circ L(i^c)$. Applying the exact functor $\Res_{\nu+i,ci}$ yields a surjection
 \[
  \Res_{\nu+i,ci}L(i)\circ N \circ L(i^c) \twoheadrightarrow \Res_{\nu+i,ci} \t f_i M.
 \]
Since $\e_i^*(\t f_i M)=c$, the object $\Res_{\nu+i,ci} \t f_i M$ is nonzero and every simple subquotient is of the form $(\t e_i)^c \t f_i M\otimes L(i^c)$. Therefore there is a surjection
\[
  \Res_{\nu+i,ci}L(i)\circ N \circ L(i^c) \twoheadrightarrow (\t e_i)^c \t f_i M\otimes L(i^c).
 \]
 
Note that $\e_i^*(N)=0$. Therefore in the Mackey filtration of $\Res_{\nu+i,ci}L(i)\circ N \circ L(i^c)$, the only possible subquotients which are not traceless are $L(i)\circ N \otimes L(i^c)$ and $N\circ L(i)\otimes L(i^c)$.
By the induction-restriction adjunction and the fact that $\e_i^*((\t e_i)^c \t f_i M)=0$, there are no homomorphisms from $N\circ L(i)$ to $(\t e_i)^c \t f_i M$. Therefore there must be a surjection
\[
 L(i)\circ N \twoheadrightarrow (\t e_i)^c \t f_i M.
\]
Since $\t f_i N$ is the unique irreducible quotient of $L(i)\circ N$, it must be that we have our desired isomorphism $\t f_i N \cong (\t e_i)^c \t f_i M$.
\end{proof}

\begin{proposition}\label{prop:heart}
 Conditions (1)-(6) in Theorem \ref{criterion} are satisfied by the crystal $\sB$.
\end{proposition}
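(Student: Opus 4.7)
The plan is to verify conditions (1)--(6) in turn. Conditions (1)--(2) are direct Mackey-type arguments, while (3)--(6) reduce to showing that every single-color subcrystal of $\sB$ has the $\mathfrak{sl}_2$-bicrystal structure of some $B_t$ from Lemma \ref{bt}. The essential tools are the Mackey filtration (Theorem \ref{mackey}), Lemmas \ref{3.7}, \ref{3.8}, \ref{8.4}, the $R$-matrix of Lemma \ref{degr}, and the commutation of Lemma \ref{7.2ii}.

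Condition (1) is immediate: $L(i)\circ M$ is nonzero (induction is exact and faithful), and its head is irreducible by Lemma \ref{8.4}, so $\t f_i M\in\sB$; symmetrically $\t f_i^* M\in\sB$. For condition (2) with distinct $i,j\in J$, analyze $L(i)\circ M\circ L(j)$: the Mackey filtration of $\Res_{i,\nu,j}$ has a unique non-traceless top subquotient $L(i)\otimes M\otimes L(j)$, so both $\t f_j^*(\t f_i M)$ and $\t f_i(\t f_j^* M)$ are irreducible quotients of $L(i)\circ M\circ L(j)$ determined by this common restriction data, and hence agree. The grading shifts coincide because $\t f_j^*$ does not alter $\e_i$ and $\t f_i$ does not alter $\e_j^*$ when $i\neq j$, which is itself a Mackey computation.

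For conditions (3)--(6), fix a color $i$ and restrict attention to the operators $\t e_i,\t f_i,\t e_i^*,\t f_i^*$. The central computation is via $\Res_{mi,\nu-(m+k)i,ki}(M)$: applying Lemma \ref{3.8} on both sides yields non-traceless subquotients of the form $L(i^m)\otimes Y\otimes L(i^k)$ where $\e_i(Y)=\e_i^*(Y)=0$. Dimension matching, as in the unfolded arguments of \cite[\S7]{laudavazirani}, forces the bound $\e_i(M)+\e_i^*(M)+\langle\wt(M),\a_i^\vee\rangle\geq 0$ of condition (3). The boundary case (4) corresponds to the $R$-matrix $r_{L(i),M}$ of Lemma \ref{degr} being an isomorphism in the relevant degree, identifying $L(i)\circ M$ with $M\circ L(i)$ and hence $\t f_i M=\t f_i^* M$. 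Condition (5) is read off the Mackey analysis directly once strict slack is present; condition (6) is exactly Lemma \ref{7.2ii} applied with $c=\e_i^*(M)$, once (5) is used to check $\e_i^*(\t f_i M)=\e_i^*(M)$.

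The main obstacle is condition (3): the inequality is stated with respect to the Cartan datum $(J,\cdot)$, but our modules live over the symmetric datum $(I,\cdot)$. One must verify that the rescaling factor $d_i$ implicit in $\a_i^\vee$ for $(J,\cdot)$ matches the grading shifts $q_i=q^{d_i}$ in the definitions of $\t e_i,\t f_i$, and that grouping $a$-orbits of length $>1$ into traceless summands in the Mackey filtration does not shift the inequality. This compatibility ultimately reflects the construction of $(J,\cdot)$ as the restriction of the symmetric form on $\Z I$ to $\Z J\subset\Z I$ in Section 2.
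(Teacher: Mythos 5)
Your plan for conditions (1), (3), (5) roughly matches the paper's: the paper simply observes that (3) and (5) do not involve the diagram automorphism at all and so are literal imports of \cite[Proposition 6.7(v)]{laudavazirani}, whereas you propose reproving them via a Mackey computation (and then worry, unnecessarily, about compatibility between the $(I,\cdot)$ and $(J,\cdot)$ Cartan data — the paper sidesteps this entirely). Your (4) is in the right spirit but elides the actual work: the paper first uses (3) together with the inequalities $\e_i(\t f_iM)-\e_i(M)\leq 1$ and $\e_i^*(\t f_iM)-\e_i^*(M)\leq 1$ to pin down $\e_i^*(\t f_iM)=\e_i(M)+1$, then a head/socle duality argument to show $L_i\circ M$ is actually irreducible; only then does the $R$-matrix give the identification.

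There is a genuine gap in your argument for condition (2). You claim that in the Mackey filtration of $\Res_{i,\nu,j}(L(i)\circ M\circ L(j))$ the unique non-traceless top subquotient $L(i)\otimes M\otimes L(j)$ forces the head to be simple, ``determined by the common restriction data.'' But the argument of Lemma \ref{3.7} that turns restriction information into a simple-head conclusion requires the \emph{whole} restriction $\Res_{i,\nu,j}(L(i)\circ M\circ L(j))$ to be (the simple object) $L(i)\otimes M\otimes L(j)$, not merely to have it as a top layer. That isomorphism holds only under vanishing hypotheses like $\e_i(M)=0$ and $\e_j^*(M)=0$, which fail in general. When $M$ itself begins with $i$'s or ends with $j$'s, the Mackey filtration has several non-traceless layers and the surjection-onto-a-direct-sum contradiction does not go through. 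The paper avoids this entirely by invoking the semicuspidal (convex order) classification of simples from \cite[\S 2]{tingleywebster} with a convex order putting $\a_i$ and $\a_j$ at opposite extremes, which guarantees the head of $L_i\circ M\circ L_j$ is simple without any vanishing assumption.

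Your treatment of (6) is also too quick: you say condition (6) ``is exactly Lemma \ref{7.2ii} applied with $c=\e_i^*(M)$.'' But Lemma \ref{7.2ii} compares $\t f_i$ with the operator $(\t e_i^*)^c$, not with $\t f_i^*$. Turning that into the statement $\t f_i\t f_i^*=\t f_i^*\t f_i$ requires (as the paper does) using (5) to see that $\t f_iM$ again satisfies the hypothesis, applying (5) to both $M$ and $\t f_iM$ to control $\e_i(\t f_i^*-)$, and then applying Lemma \ref{7.2ii} \emph{twice} and translating back. A single application is not enough.
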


\begin{proof}
 That condition (1) is satisfied is obvious.
 
 For (2), note that 
$\t f_i \t f_j^*(M)$ and $\t f_j^* \t f_i (M)$ are both simple quotients of $L_i\circ M\circ L_j$. It suffices to prove that the underlying $R$-module of $L_i\circ M\circ L_j$ has a simple head, which then forces these two quotients to be identical. This follows from the classification of irreducible modules in terms of semicuspidal decompositions in \cite[\S 2]{tingleywebster}, where we consider a convex order with all simple roots in $i$ at one extreme and all simple roots in $j$ at the other extreme.

The condition (3) does not involve the diagram automorphism, so follows from the corresponding result in the unfolded case, namely \cite[Proposition 6.7(v)]{laudavazirani}.

We now turn our attention to (4). Suppose $M$ is such that  $\e_i(M)+\e_i^*(M)+ \langle \operatorname{wt}(M),\a_i^\vee\rangle=0$. We apply the inequality (3) to $\t f_i(M)$ as well as the obvious inequalities $\e_i(\t f_i(M))-\e_i(M)\leq 1$ and $\e^*_i(\t f_i(M))-\e_i^*(M)\leq 1$ to deduce that $\e_i^* (\t f_i(M))= \e_i (M)+1$.

Now consider $M\circ L_i$. The object $\t f^*(M)$ is both the head and the unique subquotient with maximal $\epsilon_i^*$. Taking duals shows that the socle of $L_i\circ M$ is the unique subquotient with maximal $\epsilon_i^*$. But the identity $\e_i^* (\t f_i(M))= \e_i (M)+1$ shows that the head, $\t f_i(M)$, is the unique subquotient with maximal $\epsilon_i^*$. Therefore $L_i\circ M$ is irreducible, so the $R$-matrix induces an isomorphism (in $\C$) between $M\circ L_i$ and $L_i\circ M$. This is an isomorphism between $\t f_i^*(M)$ and $\t f_i(M)$, as desired.

The condition (5) also does not involve the diagram automorphism, so also follows from the corresponding result in the unfolded case in \cite{laudavazirani}.

For condition (6), first apply condition (5) to deduce $\e_i^*(\t f_i M)=\e_i^*(M)$. This implies that $\t f_i M$ also satisfies the condition (5). Now applying (5) to both $M$ and $\t f_i M$ yields $\e_i (\t f_i^* M)=\e_i M$ and  $\e_i (\t f_i^* \t f_i M)=\e_i (\t f_iM)$. With these two equations, we apply Lemma \ref{7.2ii} twice to reach the desired conclusion.
\end{proof}

\begin{lemma}\label{efcommute}
 Suppose $i,j\in J$ and $b\in \sB$. Then $\tilde e_i \tilde f_j^* b$ is either equal to $\tilde f_j^* \tilde e_i b$ or $b$.
\end{lemma}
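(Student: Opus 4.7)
The plan is to split into two cases, $i \neq j$ and $i = j$, and deduce the lemma from the conditions of Theorem~\ref{criterion} verified for $\sB$ in Proposition~\ref{prop:heart}.

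For $i \neq j$, condition~(2) gives $\tilde f_i \tilde f_j^* b = \tilde f_j^* \tilde f_i b$. If $\tilde e_i b \neq 0$, write $b = \tilde f_i(\tilde e_i b)$ and apply condition~(2) to $\tilde e_i b$ to get
\[
\tilde f_j^* b = \tilde f_j^* \tilde f_i \tilde e_i b = \tilde f_i \tilde f_j^* \tilde e_i b,
\]
so that $\tilde e_i \tilde f_j^* b = \tilde f_j^* \tilde e_i b$. If $\tilde e_i b = 0$, I would argue $\tilde e_i \tilde f_j^* b = 0$ by contradiction: if $c = \tilde e_i \tilde f_j^* b$ were nonzero, then $\tilde f_i c = \tilde f_j^* b$, and condition~(2) applied to $\tilde e_j^* c$ (combined with the injectivity of $\tilde f_j^*$ that follows from crystal axiom~(4)) produces an $\tilde e_i$-preimage of $b$, contradicting $\tilde e_i b = 0$.

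For $i = j$, I would set $c := \e_i(b) + \e_i^*(b) + \langle \wt(b), \alpha_i^\vee\rangle$, which is $\geq 0$ by condition~(3), and branch on its value. When $c = 0$, condition~(4) gives $\tilde f_i b = \tilde f_i^* b$, so $\tilde e_i \tilde f_i^* b = \tilde e_i \tilde f_i b = b$, the second alternative of the lemma. When $c \geq 1$ and $\e_i(b) = 0$, condition~(5) gives $\e_i(\tilde f_i^* b) = \e_i(b) = 0$, so both $\tilde e_i \tilde f_i^* b$ and $\tilde f_i^* \tilde e_i b$ are zero. When $c \geq 1$ and $\e_i(b) \geq 1$, set $b' = \tilde e_i b$; if $c(b') = 0$, condition~(4) on $b'$ gives $\tilde f_i^* b' = \tilde f_i b' = b$, hence $\e_i^*(b) = \e_i^*(b') + 1$, and a routine computation yields $c(b') = c(b) \geq 1$, contradicting $c(b') = 0$. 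So $c(b') \geq 1$, and condition~(5) on $b'$ gives $\e_i^*(b') = \e_i^*(b)$, whence $c(b') = c(b) + 1 \geq 2$. Condition~(6) applied to $b'$ then delivers $\tilde f_i \tilde f_i^* b' = \tilde f_i^* \tilde f_i b' = \tilde f_i^* b$, and the injectivity of $\tilde f_i$ yields $\tilde e_i \tilde f_i^* b = \tilde f_i^* b' = \tilde f_i^* \tilde e_i b$.

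The main obstacle is the bookkeeping in the last subcase, where one must propagate the value of $c$ from $b$ to $b' = \tilde e_i b$ in order to invoke condition~(6); the combination of conditions~(4) and~(5) is what rules out $c(b') = 0$ and then pins down $c(b') = c(b) + 1$. A more pictorial alternative is to invoke Lemma~\ref{bt}, which reduces the entire claim to a direct check in the model bicrystal $B_t$; the boundary between the main region ($a+b+c = t$) and the pure excess region ($c = 0$, $a+b \neq t$) of $B_t$ is precisely where the ``equal to $b$'' alternative of the lemma surfaces.
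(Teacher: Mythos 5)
Your $i=j$ case is a correct, self-contained case analysis from conditions (3)--(6) of Theorem~\ref{criterion}; the paper instead invokes Lemma~\ref{bt} to reduce the $i=j$ case to a direct check in the one-colour model bicrystal $B_t$, which is exactly the alternative you mention at the end (though your phrase ``reduces the entire claim'' overreaches: Lemma~\ref{bt} is a single-colour statement and only disposes of $i=j$, not $i\neq j$). Your $i\neq j$ subcase with $\tilde e_i b\neq 0$ is also correct and matches the paper's use of condition~(2).

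The gap is in the subcase $i\neq j$, $\tilde e_i b=0$. Your contradiction argument applies condition~(2) to $\tilde e_j^* c$ where $c=\tilde e_i\tilde f_j^* b$, which silently requires $\tilde e_j^* c\neq 0$, and you never establish this. It does not obviously follow from conditions (1)--(6) alone: the needed inequality $\epsilon_j^*(c)\geq 1$ amounts to saying that $\tilde f_i$ (with $i\neq j$) cannot raise $\epsilon_j^*$, which is the dual form of the very commutation you are in the middle of proving, so as written the argument is circular. The paper handles $i\neq j$ with a representation-theoretic input instead: $\epsilon_i(b)=\epsilon_i(\tilde f_j^* b)$, which comes from the Mackey filtration applied to $M\circ L(j)$ (for $i\neq j$ no $i$-prefix can be drawn from the $L(j)$ factor, so $\epsilon_i$ does not increase under $\tilde f_j^*$, and it does not decrease either by restricting first at the $j$-end). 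With that identity in hand, $\tilde e_i\tilde f_j^* b$ and $\tilde f_j^*\tilde e_i b$ are either both zero or both nonzero, and the nonzero case is precisely your condition-(2) computation. You should supply this (or an equivalent) ingredient to close the $\tilde e_i b=0$ subcase.
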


\begin{proof}
 If $i=j$ then this is an immediate consequence of Lemma \ref{bt} and Proposition \ref{prop:heart} since this result holds in the crystal $B_t$. So now suppose that $i\neq j$. Then $\epsilon_i(b)=\epsilon_i(\tilde f_j^*b)$ so either $\tilde e_i \tilde f_j^* b$ and $\tilde f_j^* \tilde e_i b$ are both zero, or both nonzero. In the latter case, applying condition (3) to $\tilde e_i \tilde f_j^* b$ implies the result.
\end{proof}

Let $[1]$ be the object $(k,\id)$ in $\C_0$. It lies in $\sB$. If $n$ is odd this is the only element of $\sB$ of weight zero. If $n$ is even, there is exactly one other weight zero element of $\sB$, which we denote $[-1]$.

\begin{lemma}\label{bl=br}
 Let $\sB_L$ be the subset of $\sB$ consisting of all elements of the form $\tilde f_{i_1}\cdots \tilde f_{i_k} [1]$. Let $\sB_R$ be the subset of $\sB$ consisting of all elements of the form $\tilde f^*_{i_1}\cdots \tilde f^*_{i_k} [1]$. Then $\sB_L = \sB_R$.
\end{lemma}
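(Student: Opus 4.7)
The plan is to reduce to a symmetric closure statement. I aim to prove: \emph{$\sB_R$ is closed under each operator $\t f_i$}. Together with the symmetric fact that $\sB_L$ is closed under each $\t f^*_i$ (proved by interchanging starred and unstarred operators throughout), and the observation that $[1]$ lies in both $\sB_L$ and $\sB_R$, this immediately yields $\sB_L \subseteq \sB_R$ and $\sB_R \subseteq \sB_L$, completing the proof.

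I argue the closure statement by induction on the number $m$ of starred lowering operators needed to express $c \in \sB_R$ starting from $[1]$. The base case $m = 0$ amounts to showing $\t f_i [1] \in \sB_R$: since $\e_i([1]) = \e^*_i([1]) = \langle \wt([1]), \a_i^\vee\rangle = 0$, condition (4) of Theorem \ref{criterion} (verified for $\sB$ in Proposition \ref{prop:heart}) gives $\t f_i[1] = \t f^*_i[1]$, which lies in $\sB_R$.

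For the inductive step, write $c = \t f^*_j d$ where $d$ is expressible with $m-1$ starred lowering operators. If $i \neq j$, condition (2) of Theorem \ref{criterion} commutes the operators past one another, so $\t f_i c = \t f^*_j(\t f_i d)$, and $\t f_i d \in \sB_R$ by the inductive hypothesis. If $i = j$, set $c_0 := \e_i(d) + \e^*_i(d) + \langle \wt(d), \a_i^\vee\rangle \geq 0$. When $c_0 \geq 2$, condition (6) gives $\t f_i \t f^*_i d = \t f^*_i(\t f_i d)$, which lies in $\sB_R$ by induction. When $c_0 \in \{0, 1\}$, Lemma \ref{bt} identifies the subcrystal generated by $d$ under the four $i$-operators with some $B_t$; a direct computation in $B_t$ shows that $e := \t f^*_i d$ satisfies $\e_i(e) + \e^*_i(e) + \langle \wt(e), \a_i^\vee\rangle = 0$, whereupon condition (4) applied at $e$ yields $\t f_i e = \t f^*_i e$, and hence $\t f_i c = \t f^*_i \t f^*_i d \in \sB_R$.

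The genuine obstacle is the $i = j$ case, where $\t f_i$ and $\t f^*_i$ need not commute: condition (6) disposes of the generic regime $c_0 \geq 2$, but the boundary cases $c_0 \in \{0, 1\}$ require the explicit bookkeeping in the $B_t$ model provided by Lemma \ref{bt} in order to reduce to condition (4).
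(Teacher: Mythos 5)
Your proof is correct, but takes a genuinely different route from the paper's. The paper argues by contradiction: since both starred and unstarred structures make $\sB$ a highest weight crystal with the two weight-zero vertices $[1]$ and $[-1]$, the statement $\sB_L\neq\sB_R$ would force an equation $\tilde f_{i_1}\cdots\tilde f_{i_k}[1]=\tilde f^*_{j_1}\cdots\tilde f^*_{j_k}[-1]$; applying $\tilde e_{i_1}$ and repeatedly invoking Lemma~\ref{efcommute} (which says $\tilde e_i\tilde f_j^*b$ is either $\tilde f_j^*\tilde e_ib$ or $b$) produces either a shorter counterexample or the absurdity $0\neq 0$. You instead prove closure of $\sB_R$ under the $\tilde f_i$ directly, by induction on the length of the starred expression, handling the $i\neq j$ case with condition (2), the $i=j$ generic case with condition (6), and the boundary cases $c_0\in\{0,1\}$ by identifying the local $i$-string with $B_t$ via Lemma~\ref{bt} and checking that $\tilde f^*_i d$ lands at weight-zero excess, where condition (4) applies. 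The two approaches are roughly comparable in length once one accounts for the paper's dependence on Lemma~\ref{efcommute} (which is itself proved from Lemma~\ref{bt} and Proposition~\ref{prop:heart}); your argument avoids Lemma~\ref{efcommute} entirely and is slightly more explicit about which conditions of the Kashiwara--Saito criterion do the work, at the cost of a case analysis that the paper's contradiction argument packages more tersely. One point worth making explicit in a write-up: the invocation of Lemma~\ref{bt} is legitimate here even though Theorem~\ref{crystal} has not yet been proved, because Lemma~\ref{bt} only uses conditions (1)--(6), which Proposition~\ref{prop:heart} has already established for all of $\sB$, not the global highest-weight hypothesis that Lemma~\ref{bl=br} itself helps to verify.
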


\begin{proof}
 What we have to prove is that it is impossible to have an equation of the form.
 \[
  \tilde f_{i_1}\cdots \tilde f_{i_k} [1] = \tilde f^*_{j_1}\cdots \tilde f^*_{j_k} [-1]
 \]
 Suppose for want of a contradiction that such an equation holds. We apply $\tilde e_{i_1}$ to this equation and repeatedly use Lemma \ref{efcommute} on the right hand side to either arrive at a smaller counterexample or a direct contradiction.
\end{proof}

\begin{theorem}\label{crystal}
 The bicrystal $\sB_L=\sB_R$ is isomorphic to $B(\infty)$.
\end{theorem}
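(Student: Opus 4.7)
The plan is to apply Theorem \ref{criterion} to the set $B=\sB_L=\sB_R$ equipped simultaneously with the crystal structures $(\tilde e_i,\tilde f_i)$ and $(\tilde e_i^*,\tilde f_i^*)$ inherited from the ambient set $\sB$. The six combinatorial conditions required by Theorem \ref{criterion} have already been verified on all of $\sB$ in Proposition \ref{prop:heart}, and hence restrict to $\sB_L$ for free. What remains is (i) to check that $\sB_L$ is actually stable under all four operators, so that it inherits well-defined bicrystal structures, and (ii) to recognize $[1]$ as a common highest weight element making both structures highest weight crystals.

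For stability, closure under $\tilde f_i$ is immediate from the definition of $\sB_L$, and closure under $\tilde f_i^*$ is immediate from the equality $\sB_L=\sB_R$ (Lemma \ref{bl=br}). The substantive point is closure under the raising operators. I would argue closure of $\sB_R$ under $\tilde e_i$ by induction on the length $k$ of a word $b=\tilde f^*_{j_1}\cdots \tilde f^*_{j_k}[1]$. Applying Lemma \ref{efcommute} to the outermost pair $\tilde e_i \tilde f^*_{j_1}$, the result is either $\tilde f^*_{j_2}\cdots\tilde f^*_{j_k}[1]\in\sB_R$ directly, or else $\tilde f^*_{j_1}\bigl(\tilde e_i\tilde f^*_{j_2}\cdots\tilde f^*_{j_k}[1]\bigr)$, which lies in $\sB_R$ by the inductive hypothesis together with closure under $\tilde f^*_{j_1}$. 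The symmetric argument (using the obvious $*$-swapped version of Lemma \ref{efcommute}, whose proof is identical) gives closure of $\sB_L$ under $\tilde e_i^*$.

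With closure in hand, both $(\sB_L,\tilde e_i,\tilde f_i)$ and $(\sB_L,\tilde e_i^*,\tilde f_i^*)$ satisfy the definition of a highest weight crystal with distinguished element $[1]$: condition (1) in each case is exactly the defining property of $\sB_L$, respectively $\sB_R$, and condition (2) is the identity $\epsilon_j(b)=\max\{n\mid \tilde e_j^n b\neq 0\}$ (and its $*$-version) recorded just after the definition of the crystal operators. The element $[1]=(k,\mathrm{id})$ manifestly has weight $0$. Theorem \ref{criterion}, combined with Proposition \ref{prop:heart}, then delivers the isomorphisms
\[
(\sB_L,\tilde e_i,\tilde f_i)\;\cong\;(\sB_L,\tilde e_i^*,\tilde f_i^*)\;\cong\; B(\infty),
\]
as desired.

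The main obstacle is the closure step, since all six axioms of Theorem \ref{criterion} on the ambient $\sB$ are already supplied by Proposition \ref{prop:heart}; once one knows that $\sB_L$ is an actual sub-bicrystal, the conclusion is a formal invocation of the Kashiwara--Saito characterization packaged in Theorem \ref{criterion}.
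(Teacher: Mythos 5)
Your proof is correct and takes essentially the same approach as the paper's, which invokes the same three ingredients (Lemma \ref{bl=br}, Proposition \ref{prop:heart}, Theorem \ref{criterion}) but compresses the stability and highest-weight verifications into the single assertion ``It is clear that $\sB_L=\sB_R$ is highest weight. It is a bicrystal by Lemma \ref{bl=br}.'' Your inductive argument via Lemma \ref{efcommute} for closure of $\sB_L$ under $\tilde e_i$ (and its $*$-swapped twin for $\tilde e_i^*$) is a careful spelling-out of exactly what the paper leaves implicit, and the rest of the reduction to Theorem \ref{criterion} via Proposition \ref{prop:heart} matches the paper verbatim.
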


\begin{proof}
 It is clear that $\sB_L=\sB_R$ is highest weight. It is a bicrystal by Lemma \ref{bl=br}. Proposition \ref{prop:heart} shows that it satisfies the conditions (1)-(6) in the statement of Theorem \ref{criterion}. Applying Theorem \ref{criterion} completes the proof.
\end{proof}

We will now write $\sB_+$ for the bicrystal $\sB_L=\sB_R$.

\begin{remark}
 Our use of \cite[Proposition 6.7(v)]{laudavazirani} is the only place in this paper where we use the necessary fact that the leading coefficients of $Q_{ij}(u,v)$ are nonzero.
\end{remark}

\begin{corollary}
 The Grothendieck group $K(\P_\nu)$ has dimension
 \[
  \sum_{\nu\in \N J}\dim K(\P_\nu) t^\nu = \prod_{\a\in\Phi^+} (1-t^{\a})^{-\operatorname{mult}(\a)}.
 \]
\end{corollary}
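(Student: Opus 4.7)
The plan is to exhibit a $\Z[\zeta_n, q, q\inv]$-basis of $K(\P_\nu)$ indexed by $\sB_\nu$, then invoke Theorem \ref{crystal} to match $|\sB_\nu|$ with $\dim \f_\nu$, and finally quote the Lusztig dimension formula already recalled in Section 6 to obtain the generating series.

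First I would show that $K(\P_\nu)$ is a free $\Z[\zeta_n, q, q\inv]$-module whose rank equals $|\sB_\nu|$. The relevant input is Theorem \ref{classifysimples}: every simple object of $\C_\nu$ is either traceless or isomorphic to $X(L, \la, \sigma_0)$ for some simple $R(\nu)$-module $L$ and a scalar $\la \in k^\times$; varying $\la$ over the $n/t$-th roots of a fixed scalar multiplies the class by powers of $\zeta_n$ in the Grothendieck group. By Lemma \ref{equivariantstructure} and the relation $[(M,\zeta_n\sigma)] = \zeta_n[(M,\sigma)]$, every non-traceless simple class in $K(\L_\nu)$ is a $\Z[\zeta_n]$-multiple (times a grading shift) of a unique self-dual element of $\sB_\nu$, and the traceless classes vanish. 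Passing to projective covers in $\P_\nu$ (which exist and are unique for objects in $\L_\nu$ by the smash-product description of Lemma \ref{lemma:easy}) gives a corresponding dichotomy for indecomposable projectives, so the classes of the projective covers of elements of $\sB_\nu$ constitute a $\Z[\zeta_n, q, q\inv]$-basis of $K(\P_\nu)$. In particular, $\mathrm{rank}\, K(\P_\nu) = |\sB_\nu|$.

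Second, Theorem \ref{crystal} identifies $\sB_+ = \sqcup_\nu \sB_\nu$ with the crystal $B(\infty)$, so $|\sB_\nu| = |B(\infty)_\nu|$. Since $B(\infty)$ indexes a $\Z[q,q\inv]$-basis of $\f$ (e.g.\ the canonical basis), one has $|B(\infty)_\nu| = \dim \f_\nu$. Combining these with the dimension formula
\[
\sum_{\nu\in\N J}\dim \f_\nu\, t^\nu = \prod_{\a\in \Phi^+}(1-t^\a)^{-\operatorname{mult}(\a)}
\]
recalled from \cite[Theorem 33.1.3]{lusztigbook} yields the claimed identity.

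The main technical point in this plan is the first step: one needs to track carefully how the root-of-unity scalars $\zeta_n$ and the vanishing of traceless classes interact, in order to upgrade the bijection between $\sB_\nu$ and the set of self-dual simples to a genuine $\Z[\zeta_n, q, q\inv]$-basis statement for $K(\P_\nu)$. Once this bookkeeping is settled, the remainder is a direct appeal to Theorem \ref{crystal} and the classical Lusztig formula.
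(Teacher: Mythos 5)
Your proposal is correct and follows essentially the same route the paper has in mind. The paper gives no explicit proof for this corollary — it is placed immediately after Theorem \ref{crystal}, and the intended argument is exactly what you spell out: the classes of projective covers of elements of $\sB_\nu$ form a $\Z[\zeta_n,q,q\inv]$-basis of $K(\P_\nu)$ (this is stated later in Section 11, where $\B$ is defined), Theorem \ref{crystal} matches $\sB_+$ with $B(\infty)$ and hence $|\sB_\nu|$ with $\dim\f_\nu$, and Lusztig's formula finishes. Your extra care in tracking the $\zeta_n$-scalars and the vanishing of traceless classes via Theorem \ref{classifysimples} and Lemma \ref{equivariantstructure} is precisely the bookkeeping the paper leaves implicit here and only addresses cursorily when constructing $\B$ in Section 11.
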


\section{Proof of the Main Theorem}\label{proof}

We have natural bases $\B$ and $\B^*$ of $K(\P)$ and $K(\L)$.
Specifically $\B^*$ consists of the classes of objects of $\sB_+$. This is a basis by Theorem \ref{classifysimples}, together with the fact that every irreducible $R(\nu)$-module is isomorphic to its dual up to a grading shift. The basis $\B$ consists of the classes of projective covers of elements of $\sB_+$.

\begin{theorem}\label{dual2}
 The bases $\B$ and $\B^*$ are dual to each other.
\end{theorem}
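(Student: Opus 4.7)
The plan is to compute the pairing $\langle [P(b)], [b'] \rangle$ directly for $b, b' \in \sB_+$, using the explicit description of self-dual simples and their projective covers inherited from Theorem~\ref{classifysimples}.

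First I would set up the basic objects. Any $b = (L_b, \sigma_b) \in \sB_+$ is non-traceless, so by Theorem~\ref{classifysimples} the underlying $R(\nu)$-module $L_b$ is simple with $a^*L_b \cong L_b$. The analogous classification for indecomposable projectives (obtained by the same argument applied to projective covers, as indicated in the remark following Lemma~\ref{equivariantstructure}) gives that the projective cover $P(b)$ in $\C_\nu$ has the form $(P_b, \sigma_{P_b})$, where $P_b$ is the projective cover of $L_b$ in $R(\nu)\mods$ (inheriting $a^*P_b \cong P_b$), and $\sigma_{P_b}$ is the unique equivariant structure making the canonical surjection $\pi \map{P_b}{L_b}$ a morphism in $\C_\nu$, i.e.\ $\pi \circ \sigma_{P_b} = \sigma_b \circ a^*\pi$.

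Next I would split into cases. The graded $\Hom$ space $\Hom_{R(\nu)}(P_b, L_{b'})$ vanishes unless $L_b \cong L_{b'}$ as $R(\nu)$-modules, in which case it is one-dimensional concentrated in degree zero (standard for the projective cover of a self-dual simple), spanned by $\pi$. In the vanishing case we immediately get $\langle [P(b)], [b'] \rangle = 0$. In the surviving case I would argue that $b = b'$ in $\sB_+$: any two equivariant structures on the same underlying $L$ differ by an $n$-th root of unity $\zeta$, self-duality forces $\zeta^2 = 1$, and the K-group relation $[(M, \zeta_n \sigma)] = \zeta_n [(M, \sigma)]$ would render two such distinct self-dual classes linearly dependent in $K(\L_\nu)$; since $\B^*$ is a basis, only one representative per underlying simple can lie in $\sB_+$. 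For $b = b'$, the intertwining identity gives $a_{\sigma_{P_b}, \sigma_b}(\pi) = (a^*)^{-1}(\sigma_b^{-1} \circ \pi \circ \sigma_{P_b}) = (a^*)^{-1}(\sigma_b^{-1} \circ \sigma_b \circ a^*\pi) = \pi$, so the trace on $\Hom(P_b, L_b)_0 = k\pi$ equals $1$, yielding $\langle [P(b)], [b] \rangle = 1$. Combining the two cases gives $\langle [P(b)], [b'] \rangle = \delta_{b, b'}$.

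The main obstacle I expect is the uniqueness step showing that the underlying simple of $b \in \sB_+$ determines $b$. This relies essentially on $\B^*$ being a basis of $K(\L_\nu)$, which is itself a consequence of the identification $\sB_+ \cong B(\infty)$ in Theorem~\ref{crystal} together with the dimension formula matching $|B(\infty)_\nu|$. Once uniqueness is in hand, the actual trace calculation reduces to a one-line substitution using only the compatibility identity that characterises the projective cover in $\C_\nu$.
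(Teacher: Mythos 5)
Your proof is correct and follows the same strategy as the paper's (direct evaluation of the Hom pairing between an indecomposable projective and a simple), but you make explicit two steps that the paper compresses into a single appeal to Schur's lemma. First, you observe that the pairing in (\ref{pairing}) is a trace of $a_{\sigma\tau}$ on $\Hom_{R(\nu)}(P_b,L_{b'})$, not the graded dimension of $\Hom_{\C_\nu}(P(b),b')$, so when that space is one-dimensional one must actually verify that $a_{\sigma\tau}$ fixes the generator $\pi$; your computation using $\pi\circ\sigma_{P_b}=\sigma_b\circ a^*\pi$ does exactly this. Second, you address the case $L_b\cong L_{b'}$ with $b\neq b'$: there the $R(\nu)$-Hom space is still one-dimensional but $a_{\sigma\tau}$ acts by a nontrivial root of unity, so the trace is nonzero and the argument would break; you rule this out by noting that $[b']$ would then be a $\zeta_n$-power times $[b]$, contradicting the linear independence of $\B^*$ (already established). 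The paper's proof implicitly conflates $\mathrm{tr}(a_{\sigma\tau},\Hom_{R(\nu)}(-,-))$ with $\dim\Hom_{\C_\nu}(-,-)$ and does not mention the uniqueness-within-$\sB_+$ issue, so your more careful version is not a different method but a welcome filling-in of a real gap.
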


\begin{proof}
 Since $k$ is algebraically closed, Schur's Lemma implies that the endomorphism algebra of any simple object is $k$. Therefore $\Hom(P,L)$ is either 0 or $k$ when $P$ is indecomposable projective and $L$ is simple. As the pairing between $K(\P)$ and $K(\L)$ is realised by the Hom pairing (\ref{pairing}), this implies that $\B$ and $\B^*$ are dual to each other.
\end{proof}

It is natural to ask the following question

\begin{question}\label{integrality}
Do the structure constants for multiplication in $K(\P)$ with respect to the basis $\B$ lie in $\Z[q,q\inv]$?
\end{question}

We remark that existing proofs of similar facts in the literature \cite{elias,lusztigbook} require as input deep geometric results that we do not have access to. They imply that if $\operatorname{char}{k}=0$ and $R(\nu)$ is of geometric type (which means that $Q_{ij}(u,v)=\pm(u-v)^d$ for all $i,j$) then the answer to Question \ref{integrality} is yes. We are only able to prove the following weaker version, which answers Question \ref{integrality} in the affirmative in most interesting cases.

\begin{lemma}\label{mstructureconstants}
The structure constants for multiplication in $K(\P)$ with respect to the basis $\B$ are in $\Z[\zeta_n+\zeta_n\inv,q,q\inv]$.
\end{lemma}

\begin{proof}
 
%
%
 Write $[P][Q]=\sum_{R\in \B} a_{PQ}^R [R]$ where a priori the $a_{PQ}^R$ lie in $\Z[\zeta_n,q,q\inv]$. On the Grothendieck group, the duality $\D$ sends $\zeta_n$ to $\zeta_n\inv$, which we will denote by a bar involution on $\Z[\zeta_n,q,q\inv]$ fixing $q$. 
 
 Since $\D (P\circ Q)\cong (\D P)\circ (\D Q)$ by Lemma \ref{lem:dualproj}, we have
 \[
  \sum_{R\in \B} a_{PQ}^R [R]=[P][Q]=[\D P][\D Q]=[\D(P\circ Q)]=\sum_R \overline{a_{PQ}^R} [\D R] = \overline{a_{PQ}^R} [R].
 \]
This implies that each coefficient $a_{PQ}^R$ is invariant under the bar involution so lies in $\Z[\zeta_n+\zeta_n\inv,q,q\inv]$ as desired.
\end{proof}

Let $\j=(j_1,j_2,\ldots)$ be a sequence of elements of $J$ in which each element occurs infinitely often. Let $\N^\infty$ be the set of sequences $\{c_m\}_{m=1}^\infty$ of natural numbers such that $c_m=0$ for all but finitely many $m$.

Suppose $b\in B(\infty)$. Define a sequence $\{c_m\}_{m=1}^\infty$ of natural numbers by 
\[
 c_m=\epsilon_{j_m}(\tilde e_{j_{m-1}}^{c_{{m-1}}}(\cdots\tilde e_{j_1}^{c_{1}}(b)\cdots )).
\]

This defines a function $\iota\map{B(\infty)}{\N^\infty}$ which is known to be injective by \cite[Theorem 2.2.1]{kashiwara}. Since $\sB_+\cong B(\infty)$, we can view $\iota$ as an inclusion from $\sB_+$ to $\N^\infty$.

If $c\in\N^\infty$, we define $P^{(c)}=P_{j_1}^{(c_1)}\circ P_{j_2}^{(c_2)} \circ \cdots$.

\begin{lemma}\label{triangularity}
 For $c\in \N^\infty$ and $L\in\sB_+$, $\Hom(P^{(c)},L)=0$ unless $\iota(L)\geq c$ in lexicographical order. If $\iota(L)=c$ then $\Hom(P^{(c)},L)\cong k$.
\end{lemma}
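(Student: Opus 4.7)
The plan is to induct on the number of nonzero entries of $c$ (which is finite because $|c|$ is determined by $\wt(L)$). The base case $c = 0$ is clear since then $P^{(c)}$ is the trivial module in $\C_0$. For the inductive step, write $c = (c_1, c')$ with $c' = (c_2, c_3, \ldots)$ and apply the induction-restriction adjunction to obtain
\[
\Hom(P^{(c)}, L) \;\cong\; \Hom\bigl(P(j_1)^{(c_1)} \otimes P^{(c')},\ \Res_{c_1 j_1,\,\nu-c_1 j_1} L\bigr).
\]
Three cases arise according to how $c_1$ compares with $\epsilon_{j_1}(L) = \iota(L)_1$.

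If $c_1 > \epsilon_{j_1}(L)$, the restriction vanishes by the definition of $\epsilon$, forcing $\Hom = 0$; and since $c > \iota(L)$ lex in this subcase, the lemma holds. If $c_1 < \epsilon_{j_1}(L)$, the lemma is vacuous because already $\iota(L) > c$ lex. The essential case is $c_1 = \epsilon_{j_1}(L)$, in which Lemma \ref{3.8} identifies
\[
\Res_{c_1 j_1,\,\nu-c_1 j_1} L \;\cong\; L(j_1^{c_1}) \otimes X,
\]
with $X = \tilde e_{j_1}^{c_1} L$ simple in $\sB_+$ and $\epsilon_{j_1}(X) = 0$. By Proposition \ref{4.2} both simples and projectives in $\C_{c_1 j_1 \sqcup (\nu - c_1 j_1)}$ split as outer tensor products, so the Hom decomposes as
\[
\Hom\bigl(P(j_1)^{(c_1)}, L(j_1^{c_1})\bigr) \otimes \Hom\bigl(P^{(c')}, X\bigr).
\]
The first factor is $k$ because $P(j_1)^{(c_1)}$ is by construction the projective cover of $L(j_1^{c_1})$ in $\C_{c_1 j_1}$. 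The inductive hypothesis, applied to $X$ and the shifted sequence $(j_2, j_3, \ldots)$, identifies the second factor with $0$ unless $\iota'(X) \geq c'$ lex, and with $k$ when $\iota'(X) = c'$. Since the recursive definition of $\iota$ gives $\iota(L) = (c_1, \iota'(X))$, these comparisons translate directly into the corresponding comparisons of $\iota(L)$ with $c$, closing the induction.

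The main obstacle is verifying, in the essential case, that the Hom space really splits as a tensor product in the $\Z/n$-equivariant setting of $\C_{c_1 j_1 \sqcup (\nu - c_1 j_1)}$. One has to choose compatible equivariant structures on $L(j_1^{c_1}) \otimes X$ and on $P(j_1)^{(c_1)} \otimes P^{(c')}$ so that the $a$-action is diagonal, which is supplied by the constructions used in Proposition \ref{4.2} and Lemma \ref{equivariantstructure}. The bookkeeping of grading shifts (from the divided-power definitions and the $q_j^{1 - \epsilon_j(M)}$ normalisation of $\tilde e_j$) is then routine and produces $k$ in the correct degree.
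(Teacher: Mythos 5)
Your proof takes the same route as the paper's: apply the Induction--Restriction adjunction, compare $c_1$ with $\epsilon_{j_1}(L)$, and invoke Lemma~\ref{3.8} in the essential case to reduce to $\Hom(P^{(c')}, \t e_{j_1}^{c_1} L)$ and induct.

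Two small refinements. The tensor-factorisation of the Hom space in $\C_{c_1j_1\sqcup(\nu-c_1j_1)}$ is most cleanly justified not by a general splitting claim but by the following: the K\"unneth isomorphism $\Hom_{R(\la)\otimes R(\mu)}(P_1\otimes P_2,\,L_1\otimes L_2)\cong\Hom_{R(\la)}(P_1,L_1)\otimes_k\Hom_{R(\mu)}(P_2,L_2)$ intertwines the diagonal $\Z/n$-actions, and the projective-cover map $P(j_1)^{(c_1)}\to L(j_1^{c_1})$ spans the one-dimensional first factor and is already a morphism in $\C_{c_1j_1}$, hence $\Z/n$-fixed; taking $\Z/n$-invariants therefore leaves exactly $\Hom_{\C}(P^{(c')},X)$. (Be a little careful quoting Proposition~\ref{4.2} here: in the equivariant category $\C_{\la\sqcup\mu}$, simples need not split as tensors of equivariant simples; the argument above avoids needing that.) Second, your inductive measure (the number of nonzero entries of $c$) does not strictly decrease when $c_1=0$ and $\epsilon_{j_1}(L)=0$, since you then pass to $(c',L)$ with the same module. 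Inducting instead on the position of the last nonzero entry of $c$, or on $|\nu|$ while observing that only finitely many leading zeros can occur before the weight drops, repairs this.
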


\begin{proof}
 Write $P^{(c)}=P_{j_1}^{(c_1)}\circ Q$. By adjunction,
 \[
  \Hom(P^{(c)},L)\cong \Hom(P_{j_1}^{(c_1)}\otimes Q,\Res_{c_1j_1,\nu-c_1j_1} L).
 \]
If $\e_{j_1}(L)<c_1$, this is zero. If $\e_{j_1}(L)=c_1$, then by Lemma \ref{3.8}, $\Res_{c_1j_1,\nu-c_1j_1} L\cong L(j_1^{c_1})\otimes \t 
e_{j_1}^{c_1} L$. Therefore
\[
 \Hom(P^{(c)},L)\cong \Hom(Q,\t e_{j_1}^{c_1} L).
\]
So unless $\iota(L)> c$ in lexicographical order, we have either computed $\Hom(P^{(c)},L)$ or are in a position where we can proceed by induction.
\end{proof}

\begin{lemma}\label{cstructureconstants}
 The structure constants for comultiplication in $K(\P)$ with respect to the basis $\B$ are in $\Z[q,q\inv]$.
\end{lemma}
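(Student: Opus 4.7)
The plan is to adapt the argument of Lemma \ref{mstructureconstants} to the comultiplication. I would write
\[
 [\Res_{\la,\mu} P] = \sum_{P_1\in\B_\la,\,P_2\in\B_\mu} b_P^{P_1,P_2}\,[P_1]\otimes[P_2]
\]
with $b_P^{P_1,P_2}\in\Z[\zeta_n,q,q^{-1}]$ a priori, and aim to show that each coefficient is fixed by the bar involution (which simultaneously inverts $\zeta_n$ and $q$), whence $b_P^{P_1,P_2}\in\Z[q,q^{-1}]$.

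The key intermediate step is a compatibility of duality with restriction, analogous to Lemma \ref{lem:dualproj}: for any $P\in\P_{\la+\mu}$, I would prove an isomorphism in $\P_{\la\sqcup\mu}$
\[
 \D\bigl(\Res_{\la,\mu} P\bigr) \cong q^{c(\la,\mu)}\Res_{\la,\mu}(\D P)
\]
for an explicit shift $c(\la,\mu)$ depending only on $\la$ and $\mu$. Since both functors are additive, one reduces to the case where $P$ is a free $R(\la+\mu)\#\Z/n$-module of rank one. In that case, $\Res_{\la,\mu}R(\la+\mu)=e_{\la\mu}R(\la+\mu)$, which, by the filtration constructed in the proof of Theorem \ref{mackey}, is a free $(R(\la)\otimes R(\mu))$-module on the shuffle basis; its self-duality up to the prescribed shift is then the unfolded computation of \cite{khovanovlauda}. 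The $\Z/n$-equivariance is automatic: the idempotent $e_{\la\mu}$ is $a$-invariant, and $a$ acts on the shuffle basis by an explicit permutation matrix whose interaction with the duality is transparent.

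With this compatibility established, I would apply $\D$ to the displayed expansion. Self-duality of $P$, $P_1$ and $P_2$ turns the left-hand side into $\sum \overline{b_P^{P_1,P_2}}\,[P_1]\otimes[P_2]$, while it simultaneously equals $q^{c(\la,\mu)}[\Res_{\la,\mu}P]$. Absorbing the shift $q^{c(\la,\mu)}$ into the twisted bar involution on the tensor product $K(\P_\la)\otimes K(\P_\mu)$ that mirrors the twisted product \eqref{twistedproduct} on $\f\otimes\f$, the two expressions agree coefficient by coefficient, forcing $\overline{b_P^{P_1,P_2}}=b_P^{P_1,P_2}$.

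The hardest part will be the careful bookkeeping of the shift $c(\la,\mu)$ and verifying that it matches precisely the twist needed on the coproduct side, so that the bar involution on the tensor Grothendieck group coincides with the one compatible with restriction. Once that accounting is complete, the argument closes in direct analogy with Lemma \ref{mstructureconstants}.
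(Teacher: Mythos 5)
Your route is genuinely different from the paper's. The paper does not repeat the duality argument of Lemma~\ref{mstructureconstants}: instead it first observes that, by Lemma~\ref{triangularity}, the classes $[P^{(\iota(b))}]$ (products of one-colour divided-power projectives) form a basis of $K(\P)$; it then uses Theorem~\ref{7.3} to reduce to the undivided $[P^c]$, applies the Mackey filtration Theorem~\ref{mackey} to see that $\Res P^c$ is (modulo traceless pieces, which vanish in $K$) a sum of grading shifts of $P^{c'}\otimes P^{c''}$, concludes the structure constants lie in $\Q(q)$, and finishes with $\Q(q)\cap\Z[\zeta_n,q,q^{-1}]=\Z[q,q^{-1}]$. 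So the paper never needs a duality--restriction compatibility at all.

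Your proposed compatibility $\D(\Res_{\la,\mu}P)\cong q^{\la\cdot\mu}\Res_{\la,\mu}(\D P)$ is correct for projectives, and the explicit shift is $c(\la,\mu)=\la\cdot\mu$; the proof by reduction to the free rank-one module and the shuffle basis works exactly as you sketch, with the symmetry of the shuffle-degree multiset about $-\la\cdot\mu/2$ supplying the uniform shift. What I would flag is the final step. After you absorb $q^{\la\cdot\mu}$ into the twisted bar involution on $K(\P_\la)\otimes K(\P_\mu)$, you obtain that each coefficient satisfies $\overline{b_P^{P_1,P_2}}=b_P^{P_1,P_2}$ for the involution on $\Z[\zeta_n,q,q^{-1}]$ sending $q\mapsto q^{-1}$ and $\zeta_n\mapsto\zeta_n^{-1}$. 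The fixed ring of this involution contains $\Z[\zeta_n+\zeta_n^{-1}][q+q^{-1}]$, which is strictly larger than $\Z[q,q^{-1}]$ once $n\geq 5$; so bar-invariance alone does not yet force the coefficients into $\Z[q,q^{-1}]$. (The same subtlety is present in the paper's proof of Lemma~\ref{mstructureconstants}, which is precisely the argument you are modelling.) One advantage of the paper's route for the comultiplication is that it sidesteps this entirely: the $\Q(q)$-rationality supplied by the Mackey computation, intersected with the a priori $\Z[\zeta_n,q,q^{-1}]$-integrality, gives $\Z[q,q^{-1}]$ cleanly for all $n$. If you want to keep the duality approach you would need to supplement the bar-invariance with an additional rationality input (for instance the same Mackey/$\Q(q)$ observation, or a Galois-invariance argument), at which point the argument converges with the paper's.
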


\begin{proof}
By Lemma \ref{triangularity}, the set $\{[P^{(\iota(b))}]\}$ is a basis of $K(\P)$. Since the objects $P^{(\iota(b))}$ are all self-dual, these also form a basis of $\kk$. We will now compute $r([P^{(c)}])$ to prove the result.

From Theorem \ref{7.3}, $[c]!_q[P^{(c)}]=[P^c]$ so it suffices to compute $r([P^c])$. Theorem \ref{mackey} tells us that modulo traceless subquotients (which are zero in the Grothendieck group), all subquotients of $\Res P^c$ are a grading shift of $P^{c'}\otimes P^{c''}$. Thus all structure constants lie in $\Q(q)$. This is enough to complete the proof since $\Q(q)\cap \Z[\zeta_n,q,q\inv]=\Z[q,q\inv]$.
\end{proof}

\begin{proposition}
 The twisted bialgebra structures on $\oplus_{\nu\in \N J}K(\P_\nu)$ and $\oplus_{\nu\in\N J}K(\L_\nu)$ restrict to twisted bialgebra structures on $\kk$ and $\kk^*$ respectively. The corresponding twisted bialgebras $\kk$ and $\kk^*$ are graded duals of each other.
\end{proposition}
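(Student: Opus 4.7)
My plan is to reduce this statement almost entirely to the integrality results just proved in Lemmas \ref{mstructureconstants} and \ref{cstructureconstants}, combined with the dual-basis statement of Theorem \ref{dual2}; the only new input I would need is the Hopf compatibility of the pairing with induction and restriction.

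The first half of the proposition is essentially a restatement of the two preceding lemmas. By construction $\B$ is a $\Z[q,q^{-1}]$-basis of $\kk$. Lemma \ref{mstructureconstants} says the multiplication structure constants of $K(\P)$ in the basis $\B$ lie in $\Z[q,q^{-1}]$, and Lemma \ref{cstructureconstants} says the same for the comultiplication. Hence $\kk$ is closed under both operations, and the twisted bialgebra structure on $\bigoplus_\nu K(\P_\nu)$ restricts to one on $\kk$.

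For the statement about $\kk^*$ and the graded duality, I would first establish the Hopf-type compatibility of the pairing \eqref{pairing} with the two operations. The induction-restriction adjunction gives, for $[P]\in K(\P_\la)$, $[Q]\in K(\P_\mu)$ and $[L]\in K(\L_{\la+\mu})$, an identity of the shape
\[
\langle [P]\cdot[Q],\,[L]\rangle \;=\; \langle [P]\otimes[Q],\,r([L])\rangle,
\]
where the right-hand pairing is the obvious tensor-product pairing and $r$ is the restriction coproduct on $K(\L)$; a symmetric identity holds with the roles of multiplication and comultiplication interchanged. By Theorem \ref{dual2}, $\B^*$ is the $\Z[q,q^{-1}]$-basis of $K(\L)$ dual to $\B$, so these identities force the structure constants of multiplication (resp.\ comultiplication) on $K(\L)$ in the basis $\B^*$ to coincide with those of the comultiplication (resp.\ multiplication) on $K(\P)$ in $\B$. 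These were shown to lie in $\Z[q,q^{-1}]$ in the first half, so $\kk^*$ is a twisted subbialgebra of $\bigoplus_\nu K(\L_\nu)$, and is tautologically the graded dual of $\kk$.

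The main technical obstacle I anticipate is bookkeeping the twists: the product on $\f\otimes\f$ carries the factor $q^{\beta\cdot\alpha}$ of \eqref{twistedproduct}, and on the categorical side this must be matched against the grading shifts coming from the Mackey filtration and from the adjunction isomorphism $\Hom(P\circ Q,L)\cong\Hom(P\otimes Q,\Res L)$. Once that factor sits on the correct side, the pairing identity is a routine tensor-Hom calculation. The semilinearity of $\langle\cdot,\cdot\rangle$ in the $\zeta_n$-bar causes no difficulty here, since $\Z[q,q^{-1}]\subset\Z[\zeta_n]((q))$ is fixed pointwise by that involution.
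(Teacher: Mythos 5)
Your proposal is correct and takes essentially the same route as the paper: the integrality for $\kk$ comes from Lemmas \ref{mstructureconstants} and \ref{cstructureconstants}, and the statement for $\kk^*$ follows by duality via the perfect Hom pairing of Theorem \ref{dual2}. The only difference is that you spell out the Hopf-compatibility of the pairing with induction/restriction (which the paper leaves implicit and only records later in the proof that $\chi$ is an isometry), but this is a matter of detail rather than a genuinely different argument.
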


\begin{proof}
 That $\kk$ is a bialgebra is Lemmas \ref{mstructureconstants} and \ref{cstructureconstants}. Since $k$ is algebraically closed, the endomorphism algebra of any simple object is $k$. Therefore the Hom pairing is a perfect pairing between $\kk$ and $\kk^*$. Hence the statement for $\kk^*$ follows from that for $\kk$.
\end{proof}

Let $'\f_{\Q(q)}$ be the free $\Q(q)$-algebra generated by $\th_j$. Let $'\f$ be the $\Z[q,q\inv]$-subalgebra generated by the divided powers $\th_j^{(n)}$. There is a canonical surjection from $'\f$ to $\f$.

Define $\chi\map{'\f\otimes A}{\kk}$ to be the unique algebra homomorphism such that $\chi(\theta_j)=[P_j]$ for all $j\in J$.

\begin{lemma}
 $\chi$ is a homomorphism of coalgebras.
\end{lemma}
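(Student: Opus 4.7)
The plan is to leverage the fact that $'\f$ is generated as an algebra by the elements $\{\th_j\}_{j\in J}$, so that any algebra homomorphism out of $'\f$ is determined by its values on these generators. I will show that both $r\circ \chi$ and $(\chi\otimes\chi)\circ r$ are algebra homomorphisms from $'\f$ to $\kk\otimes\kk$ (equipped with the twisted product (\ref{twistedproduct})), and that they agree on each $\th_j$.

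That $r$ on $\kk$ is an algebra homomorphism with respect to the twisted product is the content of the preceding proposition, so $r\circ\chi$ is an algebra map. For the other side, $r$ on $'\f$ is by definition extended as an algebra map (it is well-defined on the free algebra $'\f$), and $\chi\otimes\chi$ is an algebra map between the twisted tensor products because $\chi$ preserves the $\N J$-grading (sending $\th_j$ of degree $j$ to $[P_j]$ of degree $j$); hence $(\chi\otimes\chi)\circ r$ is also an algebra map.

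For the generator check, I compute both sides on $\th_j$. The left side is $(\chi\otimes\chi)(r(\th_j)) = (\chi\otimes\chi)(\th_j\otimes 1 + 1\otimes\th_j) = [P_j]\otimes 1 + 1\otimes [P_j]$. For the right side, since $j\in\N J$ is a single basis vector, the only decompositions of $j$ as a sum in $\N J$ are $j=j+0$ and $j=0+j$, so the only nontrivial contributions to $r([P_j])$ come from $\Res_{j,0}(P_j)$ and $\Res_{0,j}(P_j)$, each of which recovers $P_j$ tensored with the trivial object of $\C_0$. Hence $r([P_j]) = [P_j]\otimes 1 + 1\otimes [P_j]$, matching the left side.

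The real work has been delegated to the previous proposition: establishing the twisted bialgebra structure on $\kk$, where the twist $q^{\b_1\cdot\a_2}$ arises from a careful accounting of grading shifts in the subquotients of the Mackey filtration (Theorem \ref{mackey}) and the vanishing of traceless subquotients in the Grothendieck group. Given that input, the current lemma is a direct consequence of the universal property of $'\f$ as generated by the $\th_j$, so no genuine obstacle remains.
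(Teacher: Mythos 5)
Your argument is correct and follows essentially the same route as the paper's proof, which reduces the claim to the two facts that $r([P_j]) = [P_j]\otimes 1 + 1\otimes [P_j]$ and that $r$ is multiplicative for the twisted product on $\kk$ (the latter via the Mackey filtration), then lets the universal property of $'\f$ do the rest. You simply spell out that last step more explicitly than the paper does.
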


\begin{proof}
To show that $\chi$ is a coalgebra homomorphism, we have to show that the coproduct on $\kk$ satisfies the properties
\begin{enumerate}
 \item $r([P_j])=[P_j]\otimes 1 + 1 \otimes [P_j]$
\item $r(xy)=r(x)r(y)$ where the multiplication on the right hand side is the twisted one given in (\ref{twistedproduct}).
\end{enumerate}
The first of these facts is straightforward, while the second follows from Theorem \ref{mackey} (the Mackey filtration).
\end{proof}

Recall the ring $A$ of coefficients introduced in $\S 6$.

\begin{lemma}\label{surjective}
 The homomorphism $\chi\otimes A\map{'\f\otimes A}{\kk}$ is surjective.
\end{lemma}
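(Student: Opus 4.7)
The plan is to show that the image of $\chi$ already contains an explicit $\Z[q,q\inv]$-basis of $\kk$, namely the basis $\{[P^{(\iota(b))}]\}_{b\in\sB_+}$ identified in the proof of Lemma \ref{cstructureconstants}.

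First, I would check that $\chi$ sends divided powers to divided powers, i.e.\ $\chi(\theta_j^{(n)})=[P_j^{(n)}]$. Since $\chi$ is an algebra homomorphism, $\chi(\theta_j^n)=[P_j]^n=[P(j)^{\circ n}]$. By Theorem \ref{7.3}, iterated in $K(\P_{nj})$, we have $[P_j]^n=[n]_{q_j}!\,[P_j^{(n)}]$, which matches the definition $\theta_j^{(n)}=\theta_j^n/[n]_{q_j}!$ in ${}'\f$.

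Next, using that $\chi$ is an algebra homomorphism and that the product in $\kk$ is induced by the induction product $\circ$, for any tuple $c=(c_1,c_2,\ldots)\in\N^\infty$ with finite support we get
\[
\chi\!\left(\theta_{j_1}^{(c_1)}\theta_{j_2}^{(c_2)}\cdots\right)=[P_{j_1}^{(c_1)}]\,[P_{j_2}^{(c_2)}]\cdots=[P^{(c)}].
\]
Specialising to $c=\iota(b)$ for each $b\in\sB_+$, the image of $\chi$ contains every class $[P^{(\iota(b))}]$. By Lemma \ref{triangularity} these form a basis of $K(\P)$, and since each $P^{(\iota(b))}$ is self-dual they in fact span $\kk$ over $\Z[q,q\inv]$. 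Therefore $\chi$ is surjective.

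There is no real obstacle here; the only thing to be careful about is confirming that the divided-power calculation at the one-colour level (Theorem \ref{7.3}) is enough to ensure $\chi$ lands in the self-dual part $\kk$ rather than merely in $K(\P)\otimes_{\Z[q,q\inv]}\Z[\zeta_n,q,q\inv]$. This is automatic because each generator $[P_j]$ is self-dual, products of self-dual projectives are self-dual by Lemma \ref{lem:dualproj}, and the coefficients $1/[n]_{q_j}!$ involved in defining $[P_j^{(n)}]$ lie in $\Q(q)\cap\Z[\zeta_n,q,q\inv]=\Z[q,q\inv]$ by the integrality already established in Lemma \ref{mstructureconstants}.
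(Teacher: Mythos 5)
Your proof is correct and takes essentially the same route as the paper: show $\chi(\theta_j^{(n)})=[P_j^{(n)}]$ via Theorem \ref{7.3}, hence the classes $[P^{(\iota(b))}]$ lie in the image of $\chi$, and then invoke the unitriangularity from Lemma \ref{triangularity} to conclude these span $\kk$ over $\Z[q,q^{-1}]$. The extra discussion you add about $\chi$ landing in $\kk$ rather than $K(\P)\otimes\Z[\zeta_n,q,q^{-1}]$ is a reasonable sanity check but is not really in question, since $\chi$ is defined in the paper with target $\kk$ and Theorem \ref{7.3} already shows $[P_j^{(n)}]$ is the class of a self-dual indecomposable projective.
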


\begin{proof}
Since $[P_j^{(c)}]=\chi(\th_j^{(c)})$, the elements $\{P^{(\iota(b))}\}$ for $b\in B(\infty)$ all lie in the image of $\chi$.
 Lemma \ref{triangularity} shows that if we expand $\{P^{(\iota(b))}\}$ in the basis $\B$ of classes of indecomposable projectives, the coefficients that appear form a unitriangular matrix. In particular this matrix is invertible over $A[q,q\inv]$ which implies the surjectivity of $\chi\otimes A$.
\end{proof}

\begin{lemma}\label{isometry}
 $\chi$ is an isometry.
\end{lemma}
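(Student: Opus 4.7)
The plan is an induction on the total grading $|\nu|$, driven by the compatibility between the pairing and the bialgebra structure. The base case is $|\nu|=1$, for which Lemma \ref{7.1} gives
\[
 ([P_j],[P_j]) = (1-q^{j \cdot j})^{-1} = \langle \theta_j, \theta_j \rangle.
\]

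The essential technical input is to establish the Frobenius identity for the bilinear pairing on $\kk$: for $[M] \in K(\P_{\la+\mu})$, $[P] \in K(\P_\la)$ and $[Q] \in K(\P_\mu)$ with $\la, \mu \in \N J$,
\[
 ([M], [P][Q]) = (r[M], [P] \otimes [Q]),
\]
where the right-hand pairing is the product pairing on $\kk \otimes \kk$ from Proposition \ref{4.2} and $r$ is the coproduct induced by restriction. The underlying nonequivariant statement is the classical bimodule isomorphism
\[
 (P \circ Q)^\psi \otimes_{R(\la+\mu)} M \cong (P \otimes Q)^\psi \otimes_{R(\la) \otimes R(\mu)} \Res_{\la,\mu}(M),
\]
which must be upgraded to an equivariant statement by verifying that the $a$-equivariant automorphisms used to define the trace pairings correspond across this isomorphism.

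With this Frobenius identity in hand, the inductive step is direct. After tensoring with $\Q(q)$, the algebra ${}'\f_{\Q(q)}$ is generated by the $\th_j$'s, so by bilinearity it suffices to check the identity $(\chi(x),\chi(y)) = \langle x, y \rangle$ when $y$ is a product of generators. Writing $y = \th_j z$ with $z$ of strictly smaller grading, the defining identity $\langle x, yz\rangle = \langle r(x), y \otimes z \rangle$ on ${}'\f$, combined with the Frobenius identity on $\kk$ and the coalgebra homomorphism property of $\chi$ (previous lemma), gives
\[
 (\chi(x),[P_j]\chi(z)) = (r\chi(x),[P_j]\otimes\chi(z)) = ((\chi\otimes\chi)(r(x)),\chi(\th_j)\otimes \chi(z)),
\]
reducing both sides of the desired equality to bilinear expressions in pairings of strictly smaller grading, where the inductive hypothesis applies.

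The main obstacle is the equivariant Frobenius identity, specifically the matching of the $a$-equivariant automorphisms. The subtlety is that the coproduct $r$ on $\kk$ ranges over $a$-invariant pairs $\la,\mu \in \N J$, whereas the Mackey filtration of Theorem \ref{mackey} decomposes the restriction of an induced module along all $\N I$-pairs. The non-$a$-invariant pairs organise into $a$-orbits of length at least two and produce traceless subquotients that vanish in the Grothendieck group; this traceless cancellation is precisely what makes the two pairings agree.
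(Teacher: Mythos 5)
Your proof follows essentially the same route as the paper's: Lemma \ref{7.1} for the base case and the induction--restriction adjunction to get the compatibility of the pairing with the (co)multiplication. The paper phrases this slightly differently, verifying that the pairing on $\kk$ satisfies the three defining properties of the form on $'\f$ (the value on $[P_j]$, and $(x,yz)=(r(x),y\otimes z)$, $(xy,z)=(x\otimes y,r(z))$) and then using that these properties characterise the form; your explicit induction on $|\nu|$ is the same argument spelled out.

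One small misattribution in your final paragraph: the Mackey filtration and the traceless cancellation have nothing to do with the Frobenius identity $([M],[P][Q])=(r[M],[P]\otimes[Q])$. That identity comes purely from the tensor--hom adjunction, i.e.\ the bimodule isomorphism $(P\circ Q)^\psi\otimes_{R(\la+\mu)}M\cong(P\otimes Q)^\psi\otimes_{R(\la)\otimes R(\mu)}\Res_{\la,\mu}M$, and the equivariance is immediate from the naturality of that isomorphism; the restriction of $M$ to the single direct summand at $(\la,\mu)$ is all that enters, and no Mackey decomposition appears. Where the Mackey filtration and tracelessness genuinely intervene is in proving that $r$ is an algebra homomorphism on $\kk$, i.e.\ that $\chi$ is a coalgebra homomorphism --- but you already invoke that as the preceding lemma, so your proof stands; the caveat is simply pointing at the wrong step.
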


\begin{proof}
 To show that $\chi$ is an isometry it suffices to show that the pairing $(\cdot,\cdot)$ on $\kk$ defined by (\ref{projpairing}) satisfies the properties
 \begin{enumerate}
  \item $([P_j],[P_j])=(1-q^{j\cdot j})\inv$,
  \item $(x,yz)=(r(x),y\otimes z)$,
  \item $(xy,z)=(x\otimes y,r(z))$.
 \end{enumerate}
The first of these conditions is Lemma \ref{7.1}. The third follows from the Induction-Restriction adjunction. The second follows from the third since the form $(\cdot,\cdot)$ is symmetric.
\end{proof}


\begin{proof}[Proof of Theorem \ref{projiso}]
 Recall that $\f$ is the quotient of $'\f$ by the radical of $\langle\cdot,\cdot\rangle$. Let $R$ be the radical of $(\cdot,\cdot)$ in $\kk$. Since $\chi$ is an isometry, it induces an injective homomorphism $\ga\map{\f}{\kk/R}$. 
 Theorem \ref{crystal} and \cite[Theorem 33.1.3]{lusztigbook} compute the graded ranks of the free modules $\kk$ and $\f$ respectively. These ranks are the same, so it must be that $R$ is zero. and $\gamma$ is an injective homomorphism from $\f$ to $\kk$. Surjectivity follows from Lemma \ref{surjective} and the rest is straightforward.
\end{proof}

Theorem \ref{simiso} follows immediately from Theorem \ref{projiso}.


\section{Bases of Canonical Type}

Let $\sigma$ be the antiautomorphism of $\f$ that fixes the Chevalley generators $\th_j$. This is induced from an automorphism $\sigma$ of each $R(\nu)$ which sends $e_{\ii_1,\ldots,\ii_n}$ to $e_{\ii_n,\ldots,\ii_1}$, $y_j$ to $y_{n+1-j}$ and $\tau_k$ to $-\tau_{n-k}$.
 The bar involution of $\f$ is the automorphism fixing the generators $\th_j$ and sending $q$ to $q\inv$. Both $\sigma$ and the bar involution induce corresponding involutions on $\f^*$, which we will also call $\sigma$ or the bar involution respectively.
 
 The following definition is lifted from \cite{baumann} where it appears in the special case where $q$ is specialised to 1.
 
 \begin{definition}\label{def:can}
A basis $\mathbf B$ of $\mathbf f$ is said to be of canonical type
if it satisfies the conditions~(\ref{it:BOCTa})--(\ref{it:BOCTf}) below:
\begin{enumerate}
\item
\label{it:BOCTa}
The elements of $\mathbf B$ are weight vectors.
\item
\label{it:BOCTb}
$1\in\mathbf B$.
\item
\label{it:BOCTc}
Each right ideal $(\theta_j^p\mathbf f\otimes \Q(q))\cap \f$ is spanned by a subset of
$\mathbf B$.
\item
\label{it:BOCTd}
In the bases induced by $\mathbf B$, the left multiplication by
$\theta_j^{(p)}$ from $\mathbf f/\theta_j\mathbf f$ onto
$\theta_j^{(p)}\mathbf f/\theta_j^{(p+1)}\mathbf f$ is given by a
permutation matrix.
\item
\label{it:BOCTe}
$\mathbf B$ is stable by $\sigma$.
\item
\label{it:BOCTf}
$\mathbf B$ is stable under the bar involution.
\end{enumerate}
\end{definition}

There is a dual notion. Here we let $r_{j^p}$ be the adjoint of left multiplication by $\th_j^{(p)}$.

\begin{definition}\label{def:dualcan}
A basis $\B^*$ of $\f^*$ is said to be of dual canonical type if it satisfies the conditions ~(\ref{it:BOdCTa})--(\ref{it:BOdCTf}) below:
\begin{enumerate}
\item
\label{it:BOdCTa}
The elements of $\mathbf B^*$ are weight vectors.
\item
\label{it:BOdCTb}
$1\in\mathbf B^*$.
\item
\label{it:BOdCTc}
Each $\ker(r_{j^p})$ is spanned by a subset of $\B^*$.
\item
\label{it:BOdCTd}
In the bases induced by $\mathbf B^*$, the map
\[
 r_{j^p}\map{\ker(r_{j^{p+1}})/\ker(r_{j^{p}})}{\ker(r_{j})}
\]
is given by a
permutation matrix.
\item
\label{it:BOdCTe}
$\mathbf B^*$ is stable by $\sigma$.
\item \label{it:BOdCTf}
$\mathbf B^*$ is stable under the bar involution.
\end{enumerate}
\end{definition}

The algebra $\f$ acts on $\f^*$ where $\th_j$ acts by the endomorphism $r_j$. If $\B^*$ is a basis of dual canonical type, then the specialisation of $\B^*$ at $q=1$ is a perfect basis of the unipotent coordinate ring $\Z[N]$ (which is the specialisation of $\f^*$ at $q=1$) in the sense of \cite[Definition 5.30]{berensteinkazhdan}. The other results of \cite[\S 5]{berensteinkazhdan} then prove that the crystal obtained from any basis of dual canonical type is isomorphic to $B(\infty)$.

\begin{theorem}\label{dual1}
 A basis $\B$ of $U_q(\n)$ is of canonical type if and only if its dual basis $\B^*$ of $\A_q(\n)$ is of dual canonical type.
\end{theorem}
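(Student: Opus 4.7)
The plan is to show the equivalence of each of the six conditions in Definition~\ref{def:can} for $\B$ with the correspondingly numbered condition in Definition~\ref{def:dualcan} for $\B^*$, working through the pairing $\f^*\times\f\to\Q(q)$ under which $\B^*$ is by definition the dual basis of $\B$. The key structural input is that the operator $r_j$ on $\f^*$ is the transpose of left multiplication by $\theta_j$ on $\f$; consequently $\ker r_j^p=(\theta_j^p\f)^\perp$ for all $p$, and more generally linear operators on $\f$ and their transposes on $\f^*$ carry dual bases to dual bases. The involutions $\sigma$ and the bar involution on $\f^*$ are likewise defined as (anti-linear) adjoints of those on $\f$.

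Conditions \ref{it:BOCTa}/\ref{it:BOdCTa} and \ref{it:BOCTb}/\ref{it:BOdCTb} are immediate from graded duality. Conditions \ref{it:BOCTe}/\ref{it:BOdCTe} and \ref{it:BOCTf}/\ref{it:BOdCTf} are formal: an involution of $\f$ permutes $\B$ if and only if its adjoint on $\f^*$ permutes $\B^*$. Condition \ref{it:BOCTc}/\ref{it:BOdCTc} reduces to the annihilator formula: a subset of $\B$ spans $\theta_j^p\f$ if and only if the complementary subset of $\B^*$ spans $(\theta_j^p\f)^\perp=\ker r_j^p$.

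The main content lies in condition \ref{it:BOCTd}. Once \ref{it:BOCTc} is known on both sides, $\B$ partitions as $\sqcup_p\B_p$, where $\B_p$ consists of those $b\in\B\cap\theta_j^p\f$ that are not in $\theta_j^{p+1}\f$; the image of $\B_p$ is a basis of $\theta_j^p\f/\theta_j^{p+1}\f$. Dually, $\B^*=\sqcup_p\B_p^*$ with $\B_p^*$ inducing a basis of $\ker r_j^{p+1}/\ker r_j^p$, and these dual subquotients pair perfectly with $\B_p$ and $\B_p^*$ as dual bases. A direct computation using the identity $\langle x,\theta_j y\rangle=\langle\theta_j,\theta_j\rangle\langle r_j(x),y\rangle$ iterated $p$ times shows that the map $\theta_j^{(p)}\cdot:\f/\theta_j\f\to\theta_j^p\f/\theta_j^{p+1}\f$ appearing in condition~\ref{it:BOCTd} is, up to the divided-power normalization, the transpose of the map $r_j^p:\ker r_j^{p+1}/\ker r_j^p\to\ker r_j$ of condition~\ref{it:BOdCTd}. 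Since the transpose of a permutation matrix is a permutation matrix, these two conditions are equivalent. I expect the main technical obstacle to be this careful matching of the normalization factor $[p]_j!$ between the divided power $\theta_j^{(p)}$ and $r_j^p$; the other cases all reduce quickly to general duality arguments.
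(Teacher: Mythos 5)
Your approach is essentially the same as the paper's: translate each numbered condition across the $\f^*\times\f$ pairing, with the key point being that $r_{j^p}$ on $\f^*$ is the adjoint of multiplication by a power of $\theta_j$ on $\f$. The paper phrases conditions~(\ref{it:BOCTc})/(\ref{it:BOdCTc}) in matrix block form ($\left(\begin{smallmatrix}0\\X\end{smallmatrix}\right)$ versus $\left(\begin{smallmatrix}0&Y\end{smallmatrix}\right)$, related by transpose) and dismisses the rest as obvious; you write the same argument out in pairing language.

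The one place you should be careful is the normalization worry you raise for condition~(\ref{it:BOCTd}). You invoke the identity $\langle x,\theta_j y\rangle=\langle\theta_j,\theta_j\rangle\langle r_j(x),y\rangle$, but that is the identity for the bilinear form on $\f$, whereas the pairing under which $\B^*$ is defined to be the dual basis of $\B$ is the natural evaluation pairing $\f^*\times\f\to\Z[q,q^{-1}]$. With respect to \emph{that} pairing, $r_j$ on $\f^*$ is literally the transpose of left multiplication by $\theta_j$, with no $\langle\theta_j,\theta_j\rangle$ factor. More importantly, the notation $r_{j^p}$ in Definition~\ref{def:dualcan} is not the $p$-fold iterate $r_j^p$ but its divided-power normalization $r_j^p/[p]_j!$ (this is consistent with the coefficient ${\epsilon_j(M)\brack p}_j$, rather than $[\epsilon_j(M)]_j!/[\epsilon_j(M)-p]_j!$, in Theorem~\ref{important}). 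With this reading, $r_{j^p}$ is exactly the transpose of left multiplication by $\theta_j^{(p)}$, so the two maps in condition~(\ref{it:BOCTd}) and condition~(\ref{it:BOdCTd}) are honest transposes of each other, not merely transposes up to a unit, and there is no residual $[p]_j!$ to account for. Once you make that correction, your argument closes cleanly and matches the paper.
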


\begin{proof}
Let $A$ be the matrix representing the linear transformation from $\f_\nu$ to $\f_{\nu+pj}$ that is left-multiplication by $\th_j^{(p)}$ with respect to the basis $\B$. Condition (3) in Definition \ref{def:can} is equivalent to $A$ being in block form $\left(\begin{smallmatrix} 0 \\ X \end{smallmatrix}\right)$ where the subset of $\B$ spanning the right ideal $\theta_i^p\mathbf f\otimes \Q(q)\cap \f$ corresponds to the rows in $X$. Furthermore $X$ is invertible over $\Q(q)$ since this linear transformation is injective.

Let $B$ be the matrix representing the linear transformation $r_{j^p}$ from $\f_{\nu+pj}^*$ to $\f_\nu^*$ with respect to the basis $\B^*$. Condition (3) in Definition \ref{def:dualcan} is equivalent to $B$ being in block form $\left(\begin{smallmatrix} 0 & Y \end{smallmatrix}\right)$ with $Y$ invertible over $\Q(q)$ where the subset of $\B^*$ spanning $\ker(r_{j^p})$ corresponds to the columns not part of $Y$.

Since passing between the matrices representing adjoint operators corresponds to taking the transpose of a matrix, the two conditions labelled (3) in Definitions \ref{def:can} and \ref{def:dualcan} are equivalent. The equivalence of the other conditions is obvious.
\end{proof}

\begin{theorem}\cite[\S 2.3]{baumann}\label{basisofvla}
 Let $\la\in P^+$. Let $V(\la)$ be a highest weight module of the quantum group $U_q(\g)$ constructed from $(J,\cdot)$ with highest weight $\la$ and let $v_\la$ be a highest weight vector. Let $\B$ be a basis of canonical type. Then the set
 \[
  \{bv_\la \mid b\in \B,bv_\la\neq 0\}
 \]
 is a basis of $V(\la)$.
\end{theorem}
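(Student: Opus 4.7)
The plan is to realise $V(\la)$ as an explicit cyclic quotient of $\f$ and apply condition~(\ref{it:BOCTc}) of Definition~\ref{def:can} to read off a basis from $\B$. The argument follows \cite[\S 2.3]{baumann}.

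First, I would identify $V(\la)$ with the quotient $\f/I(\la)$, where $I(\la)=\sum_{j\in J}\f\,\th_j^{\langle h_j,\la\rangle+1}$ is a left ideal of $\f$, via the surjection $x\mapsto xv_\la$. Surjectivity is immediate from the highest weight property of $V(\la)$. The inclusion $I(\la)\subseteq\ker$ is just the integrability relation $\th_j^{\langle h_j,\la\rangle+1}v_\la=0$ available for $\la\in P^+$. Equality is the standard presentation of the integrable highest weight module, most efficiently verified by comparing the Hilbert series of $\f/I(\la)$ with the Weyl-Kac character of $V(\la)$.

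Second, condition~(\ref{it:BOCTc}) as stated concerns the right ideals $\th_j^p\f$, but the antiautomorphism $\sigma$ together with condition~(\ref{it:BOCTe}) (the $\sigma$-stability of $\B$) transports this span-by-a-subset property to left ideals: if $\B_j\subseteq\B$ is the subset spanning $\th_j^p\f$, then $\sigma(\B_j)\subseteq\B$ spans $\f\,\th_j^p$, since $\sigma$ is a bijection $\f\to\f$ exchanging left and right ideals and fixing $\th_j$. Taking $p=\langle h_j,\la\rangle+1$ and summing over $j$, the kernel $I(\la)$ is spanned by a subset $\B'\subseteq\B$, and being a subset of a basis, $\B'$ is itself a basis of $I(\la)$.

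Third, the images of $\B\setminus\B'$ then descend to a basis of $V(\la)=\f/I(\la)$. The identification $\B\setminus\B'=\{b\in\B:bv_\la\neq 0\}$ is immediate: elements of $\B'\subseteq I(\la)$ annihilate $v_\la$, while distinct elements of $\B\setminus\B'$ map to linearly independent, hence nonzero, vectors of $V(\la)$.

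The main obstacle will be the first step, identifying the kernel of $x\mapsto xv_\la$ with $I(\la)$. This is classical highest-weight theory for $\la\in P^+$, but it tacitly requires knowing the Hilbert series of $\f$ itself (via the quantum Serre relations, equivalently the quantum Gabber-Kac theorem) in order to match the Weyl-Kac character on the quotient side. Once this presentation is in hand, the remainder is routine bookkeeping using only conditions~(\ref{it:BOCTc}) and~(\ref{it:BOCTe}) of the canonical-type axioms; notably the finer conditions~(\ref{it:BOCTd}) and~(\ref{it:BOCTf}) play no role in the present statement.
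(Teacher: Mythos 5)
Your proposal is correct and follows essentially the same route as the paper: realise $V(\la)$ as the quotient of $\f$ by the left ideal $K=\sum_j \f\,\th_j^{\langle\a_j^\vee,\la\rangle+1}$, then use conditions~(\ref{it:BOCTc}) and~(\ref{it:BOCTe}) (the latter supplying the antiautomorphism $\sigma$ to convert the right-ideal condition into one about left ideals) to conclude that $K$ is spanned by a subset of $\B$, hence $\B$ induces a basis of the quotient. The only divergence is cosmetic: the paper simply cites \cite[Corollary 6.2.3(a)]{lusztigbook} for the presentation $V(\la)\cong\f/K$, which sidesteps the Hilbert-series/Gabber--Kac concern you raise, since Lusztig's proof of that corollary does not go through a character comparison.
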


\begin{proof}
 By \cite[Corollary 6.2.3(a)]{lusztigbook}, the map $x\mapsto x^-v_\la$ is a surjective map from $\f$ to $V(\la)$ with kernel the left ideal
  \[
   K=\sum_{j\in J} \f \th_j^{\langle \a_j^\vee,\la\rangle +1}.
  \]
By conditions (\ref{it:BOCTc}) and (\ref{it:BOCTe}) in the definition of a basis of canonical type, $K$ is spanned by elements of $\B$. Hence $\B$ induces a basis of the quotient $V(\la)$.
\end{proof}

%


\begin{theorem}\label{important}
 Let $M\in \sB_+$ and $p\leq \epsilon_j(M)$ be a natural number. Then
 \begin{equation}\label{rjofm}
 r_{j^p}([M])={\epsilon_j(M) \brack p}_j[\tilde e_j^p M]+ \sum_{\substack{N\in \sB_+ \\ \epsilon_j(N)<\epsilon_j(M)-p}}a_N [N]
\end{equation}
 for some $a_N\in \Z[q,q\inv]$.
\end{theorem}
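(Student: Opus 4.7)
The approach is to compute $r_{j^p}([M])$ by explicitly comparing two transitivity decompositions of the restriction $\Res_{pj,(m-p)j,\nu-mj}(M)$, where $m = \epsilon_j(M)$.

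Set $X = \tilde e_j^m M$. Iterated application of Lemma \ref{3.8} shows that $X$ is a simple object with $\epsilon_j(X) = 0$ satisfying $\Res_{mj,\nu-mj}(M) \cong L(j^m) \otimes X$ up to grading shift. Because $L(j^p)$ is the unique simple of $\C_{pj}$ up to grading shift, the class $[\Res_{pj,\nu-pj}(M)]$ in $K(\L_{pj}) \otimes K(\L_{\nu-pj})$ factors uniquely as $[L(j^p)] \otimes [Y]$ for a single class $[Y]\in K(\L_{\nu-pj})$; checking on the elements $[L(j^n)]$ via the dual of Theorem \ref{7.3} identifies this $[Y]$ with $r_{j^p}([M])$ under $\ga^*$.

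The key computation is to evaluate $[\Res_{pj,(m-p)j,\nu-mj}(M)]$ in two ways. Applying transitivity through the middle factor first yields $(\mathrm{id}\otimes\Res_{(m-p)j,\nu-mj})\bigl(\Res_{pj,\nu-pj}(M)\bigr)$, whose class is $[L(j^p)] \otimes [\Res_{(m-p)j,\nu-mj}(Y)]$. Applying transitivity through the first two factors instead gives $(\Res_{pj,(m-p)j}\otimes\mathrm{id})\bigl(\Res_{mj,\nu-mj}(M)\bigr) \cong \Res_{pj,(m-p)j}(L(j^m)) \otimes X$, and the dual of Theorem \ref{7.3} yields $[\Res_{pj,(m-p)j}(L(j^m))] = {m \brack p}_j [L(j^p)\otimes L(j^{m-p})]$. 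Equating the two expressions and cancelling $[L(j^p)]$ produces
\[
 [\Res_{(m-p)j,\nu-mj}(Y)] \;=\; {m \brack p}_j\,[L(j^{m-p})\otimes X].
\]

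The remaining step is to read off the composition factors of $Y$. Any composition factor $Z$ of $Y$ must satisfy $\epsilon_j(Z) \leq m-p$, since otherwise $\Res_{(p+\epsilon_j(Z))j,\ldots}(M)$ would be nonzero, contradicting $\epsilon_j(M)=m$. Composition factors with $\epsilon_j(Z)<m-p$ contribute nothing to $[\Res_{(m-p)j,\nu-mj}(Y)]$, while those with $\epsilon_j(Z)=m-p$ contribute $L(j^{m-p}) \otimes \tilde e_j^{m-p}(Z)$ by Lemma \ref{3.8}. The displayed identity therefore forces $\tilde e_j^{m-p}(Z)=X$ for every such $Z$, and the uniqueness half of Lemma \ref{3.8} (together with Lemma \ref{3.7}(2), which realises such a $Z$ as the head of $L(j^{m-p})\circ X$) identifies each such $Z$ with $\tilde e_j^p M$. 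Hence $\tilde e_j^p M$ appears in $Y$ with total multiplicity ${m \brack p}_j$, and the remaining composition factors have $\epsilon_j < m-p$; their multiplicities are composition multiplicities in a graded module and hence automatically lie in $\N[q,q\inv]\subset\Z[q,q\inv]$, yielding the stated formula. The main technical obstacle is the bookkeeping of grading shifts flowing through the definitions of $\tilde e_j^p M$, of $L(j^p) = q_j^{\binom{p}{2}}L(j)^{\circ p}$, and through the dual of Theorem \ref{7.3}, to confirm that the leading coefficient is precisely ${m \brack p}_j$ rather than a $q$-shifted variant of it.
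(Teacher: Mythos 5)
Your proof is correct and follows essentially the same route as the paper: apply Lemma \ref{3.8} to identify $\Res_{mj,\nu-mj}(M)\cong L(j^m)\otimes X$, show that every simple factor $L(j^p)\otimes Y$ of $\Res_{pj,\nu-pj}(M)$ with $\epsilon_j(Y)=m-p$ must have $Y\cong\tilde e_j^p M$ by a further restriction and exactness, and then extract the coefficient from the one-colour identity (\ref{onecoloureqn}). The only difference is that the paper compresses the coefficient computation into a single citation of (\ref{onecoloureqn}), whereas you unfold it via the two transitivity decompositions of $\Res_{pj,(m-p)j,\nu-mj}(M)$ — the same mechanism the paper has in mind, just written out.
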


\begin{proof}
 Let $m=\e_j(M)$. By Lemma \ref{3.8}, $ \Res_{mj,\nu-mj}(M)\cong  L(j^m)\otimes X$ for some irreducible object $X$ of $\C_{\nu-mj}$.
 
 Now suppose that $L(i^p)\otimes Y$ is a simple composition factor of $\Res_{pm,\nu-pm}M$. Note that $\e_j(Y)\leq m-p$. If $\e_j(Y)<m-p$ then this term contributes in an allowable way to the sum in (\ref{rjofm}). If $\e_j(Y)=m-p$ then again by Lemma \ref{3.8}, $\Res_{(m-p)j,\nu-(m-p)j} Y\cong L(j^{m-p})\otimes Z$ for some irreducible object $Z$. Since the restriction functor is exact we must have $X\cong Z$.
 
 As $X\cong \t e_j^m(M)$, this identifies $Y$ as $\t f_j^{m-p}\t e_j^m(M)=\t e_j^p(M)$.
 
 It remains to compute the coefficient of $[\t e_j^pM]$ in the expansion (\ref{rjofm}), which follows from (\ref{onecoloureqn}).
\end{proof}

%
%
%
%

\begin{theorem}\label{part3}
 The set $\B^*\cap \ker(r_{i^p})$ is a basis of $\ker(r_{i^p})$.
\end{theorem}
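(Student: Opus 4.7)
The plan is to combine the triangularity formula of Theorem \ref{important} with a bijectivity observation coming from the crystal structure to pin down $\ker(r_{i^p})$ exactly. I would work one $\N J$-graded piece at a time, inside $K(\L_\nu)$, and partition the basis of that piece into $\B^*_{<p}:=\{[M]\in\B^*\mid \e_i(M)<p\}$ and $\B^*_{\geq p}:=\{[M]\in\B^*\mid \e_i(M)\geq p\}$. For any $M\in\sB_+$ with $\e_i(M)<p$, the very definition of $\e_i$ gives $\Res_{pi,\nu-pi}(M)=0$, hence $r_{i^p}([M])=0$, so $\B^*_{<p}\subseteq \ker(r_{i^p})$; as a subset of the basis $\B^*$ it is automatically linearly independent. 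What remains is to show that $\B^*_{<p}$ spans the kernel, which reduces to showing that $r_{i^p}$ is injective on $\mathrm{span}(\B^*_{\geq p})$.

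For this injectivity I would appeal to Theorem \ref{important} together with the following consequence of Theorem \ref{crystal}: the operator $\t e_i^p$ gives a bijection from $\{M\in\sB_+\mid \wt(M)=\nu,\ \e_i(M)\geq p\}$ onto $\{N\in\sB_+\mid \wt(N)=\nu-pi\}$, with inverse $\t f_i^p$ (using $\e_i(\t f_i N)=\e_i(N)+1$ and the fact that $\t e_i$, $\t f_i$ are mutually inverse on the relevant subsets of $\sB_+$). Ordering both bases by increasing $\e_i$, Theorem \ref{important} exhibits the matrix of $r_{i^p}\colon\mathrm{span}(\B^*_{\geq p})\to K(\L_{\nu-pi})$ as triangular, with the entry in row $[\t e_i^p M]$ and column $[M]$ equal to the $q$-binomial ${\e_i(M)\brack p}_i$, which is nonzero, and all other nonzero entries in the column $[M]$ landing strictly earlier in the row ordering. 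A triangular matrix with nonzero diagonal is injective, establishing what we need.

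To conclude, any $x\in \ker(r_{i^p})\cap K(\L_\nu)$ decomposes uniquely as $x_{<p}+x_{\geq p}$ relative to the partition. Since $r_{i^p}(x_{<p})=0$, one gets $r_{i^p}(x_{\geq p})=0$, and the injectivity just established forces $x_{\geq p}=0$, so $x\in\mathrm{span}(\B^*_{<p})$. Summing over $\nu\in\N J$ yields the stated basis result. The only delicate points are confirming that $\t e_i^p$ is the claimed bijection and choosing the ordering that makes the matrix genuinely triangular, but both are routine given Theorems \ref{important} and \ref{crystal}; I do not see a substantial obstacle.
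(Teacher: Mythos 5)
Your proposal is correct and follows essentially the same route as the paper: reduce to showing $r_{i^p}$ is injective on the span of $\{[M]\in\B^*\mid \e_i(M)\geq p\}$, then derive that injectivity from the triangular structure of Theorem \ref{important} together with the injectivity of $\tilde e_i^p$ on $\{b\in\sB_+\mid\e_i(b)\geq p\}$ coming from the crystal isomorphism $\sB_+\cong B(\infty)$. The paper's proof is the same argument stated more tersely (it invokes only injectivity of $\tilde e_i^p$, not the full bijectivity you observe, but that distinction is immaterial).
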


\begin{proof}
 It suffices to show that $r_{i^p}$ is injective on
 \[
\operatorname{span}   \langle [M] \mid M\in \sB_+ \text{ and } \e_i(M)\geq p\rangle.
 \]
 
 Since the crystal operator $\tilde e_i^p$ is injective on the set of $b\in \sB_+\cong B(\infty)$ satisfying $\epsilon_i(b)\geq p$, this injectivity follows from Theorem \ref{important} above.
 \end{proof}
%
%
%
%

\begin{theorem}
 The basis $\B^*$ is a basis of dual canonical type.
\end{theorem}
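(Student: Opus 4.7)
The plan is to verify each of the six axioms of Definition \ref{def:dualcan} in turn. Conditions (1) and (2) are immediate from the construction of $\B^*$: since $\sB_+=\sqcup_\nu\sB_\nu$, every element of $\B^*$ is weight-homogeneous, and the unit object $[1]=(k,\id)$ of $\C_0$ is in $\sB$ and corresponds to $1\in\f^*$ under $\ga^*$. Condition (3) is exactly Theorem \ref{part3}. Condition (6) is automatic from the definition of $\sB$: every element of $\B^*$ is a class of a self-dual simple, and by Theorem \ref{simiso}(4) the bar involution on $\f^*$ corresponds to $\D$, so each element of $\B^*$ is bar-fixed.

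For condition (4), I would read Theorem \ref{important} at the extremal value $\epsilon_j(M)=p$. The $q$-binomial coefficient ${p\brack p}_j=1$, and the error sum ranges over $N\in\sB_+$ with $\epsilon_j(N)<\epsilon_j(M)-p=0$, which is empty. Hence
\[
r_{j^p}([M])=[\tilde e_j^p M],
\]
and $\epsilon_j(\tilde e_j^p M)=0$, so the result indeed lies in the subspace $\ker(r_j)$ which, by (3), is spanned by $\{[N]:N\in\sB_+,\,\epsilon_j(N)=0\}$. The induced basis of $\ker(r_{j^{p+1}})/\ker(r_{j^p})$ is $\{[M]:\epsilon_j(M)=p\}$, and the crystal operator $\tilde e_j^p$ restricts to a bijection between this set and $\{N\in\sB_+:\epsilon_j(N)=0\}$ with inverse $\tilde f_j^p$. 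Thus the map is literally a permutation of basis elements.

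Condition (5) is the step that requires the most unpacking and is the main (though still mild) obstacle. The idea is to match the algebraic antiautomorphism $\sigma$ of $\f^*$ with a categorical operation. One first checks on generators that the antiautomorphism $\sigma$ of $R(\nu)$ commutes with the Dynkin automorphism $a$ (for instance $a\sigma(\tau_k)=a(-\tau_{n-k})=-\tau_{n-k}=\sigma(\tau_k)=\sigma a(\tau_k)$, and similarly for $e_\ii$ and $y_j$), so $\sigma$ extends to an antiautomorphism of $R(\nu)\#\Z/n$. Using the $k$-linear dual, this produces a contravariant autoequivalence $(-)^\sigma$ of $\C_\nu$ which preserves simplicity and intertwines $(M\circ N)^\sigma\cong N^\sigma\circ M^\sigma$, so on the Grothendieck group $\sigma$-twist induces an antiautomorphism of $\kk^*$. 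Because the one-colour simples $L(j)$ are $\sigma$-self-dual (a direct check from Section \ref{onecolour}), and because $(-)^\sigma$ commutes with $\D$ (both duality functors use $k$-linear dual, and $\sigma,\psi$ commute up to the appropriate action), this autoequivalence sends self-dual simples to self-dual simples and fixes $[L(j)]$. By the uniqueness part of Theorem \ref{simiso}, the operation it induces on $\f^*$ is the antiautomorphism $\sigma$ itself. Hence $\B^*$ is stable under $\sigma$, completing the verification.
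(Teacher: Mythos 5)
Your proposal takes the same route as the paper's (very terse) proof: verify the six axioms, citing Theorem \ref{part3} for (3), reading Theorem \ref{important} at the extremal case $\epsilon_j(M)=p$ for (4), and invoking the automorphism $\sigma$ of $R(\nu)$ for (5). Your unpacking of (4) is correct and spells out what the paper leaves implicit, and (1), (2), (3), (6) match.

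On condition (5), there is a small imprecision that does not affect the conclusion. The paper's $\sigma$ is an \emph{automorphism} of $R(\nu)$ (not an antiautomorphism as you wrote), so the operation it induces on modules is the covariant restriction-of-scalars autoequivalence $\sigma^*$, not a contravariant functor built with a $k$-linear dual. Because $\sigma$ reverses the sequence index, one has $\sigma^*(M\circ N)\cong\sigma^*N\circ\sigma^*M$, and this is what produces an antiautomorphism on $K(\L)$. If instead one used a contravariant functor $\sigma^*\D$, the induced map on the Grothendieck group would be $\sigma$ composed with the bar involution, not $\sigma$ itself. Fortunately, since $\sB_+$ is by construction closed under $\D$ (and hence $\B^*$ is bar-stable), the set $\B^*$ is stable under $\sigma$ either way, so your argument lands in the right place. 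The remainder — the commutation of $\sigma$ with $a$, the preservation of the one-colour simples, and the identification with the antiautomorphism $\sigma$ on $\f^*$ via uniqueness in Theorem \ref{simiso} — is the content the paper compresses into one sentence, and your expansion of it is essentially correct.
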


\begin{proof}
 Conditions (1) and (2) in Definition \ref{def:dualcan} are obvious. Condition (3) is Theorem \ref{part3}. Condition (4) follows from Theorem \ref{important}, noting that if $[M]\in \ker(r_{j^{p+1}})\setminus \ker(r_j^{p})$, then $\e_j(M)=p$. The stability under $\sigma$ is because $\sigma$ is induced by the automorphism $\sigma$ of $R(\nu)$. The stability under the bar involution is because $\B^*$ consists of the classes of self-dual objects.
\end{proof}

 \begin{corollary}\label{canonicaltype}
  The KLR basis $\B$ is a basis of canonical type.
 \end{corollary}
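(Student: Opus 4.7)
The plan is to deduce this corollary immediately from the two preceding results. The previous theorem establishes that $\B^*$ is a basis of dual canonical type, and Theorem \ref{dual1} asserts the formal equivalence: a basis $\B$ of $\f$ is of canonical type if and only if its graded dual basis of $\f^*$ is of dual canonical type. Since Theorem \ref{dual2} tells us that $\B$ (the classes of self-dual indecomposable projectives in $K(\P)$) and $\B^*$ (the classes of self-dual simples in $K(\L)$) are dual to each other under the Hom pairing, and the categorification isomorphism $\gamma$ of Theorem \ref{projiso} intertwines this pairing with the form $\langle\cdot,\cdot\rangle$ on $\f$, the image of $\B$ under $\gamma$ is precisely the basis dual to $\ga^*(\B^*)$.

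Concretely, I would write: by Theorem \ref{projiso} and Theorem \ref{simiso}, we transport $\B$ and $\B^*$ along $\ga$ and $\ga^*$ to bases of $\f$ and $\f^*$, respectively; these transported bases are dual to each other by Theorem \ref{dual2} together with the compatibility of $\ga$ and $\ga^*$ with the pairings. Then apply the previous theorem, which says $\B^*$ satisfies Definition \ref{def:dualcan}, followed by Theorem \ref{dual1} to conclude that $\B$ satisfies Definition \ref{def:can}.

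There is essentially no obstacle: all the content has been proved in preceding sections, and this corollary is a one-line bookkeeping statement combining them. The only thing to be careful about is to confirm that the notion of duality used in Theorem \ref{dual1} matches the one realized by the Hom pairing, so that the identifications via $\ga$ and $\ga^*$ are indeed compatible with the dual basis relationship — this is guaranteed by condition (5) of Theorem \ref{projiso} and condition (5) of Theorem \ref{simiso}.
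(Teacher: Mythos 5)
Your proof is correct and follows essentially the same route as the paper: cite the previous theorem that $\B^*$ is of dual canonical type, Theorem \ref{dual2} that $\B$ and $\B^*$ are dual under the Hom pairing, and Theorem \ref{dual1} for the formal equivalence. The extra care you take about transporting along $\ga$ and $\ga^*$ and invoking condition (5) of Theorems \ref{projiso} and \ref{simiso} is a sensible explicit unpacking of what the paper leaves implicit, but it is the same argument.
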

 \begin{proof}
  This is a consequence of Theorems \ref{canonicaltype}, \ref{dual1} and \ref{dual2}.
 \end{proof}

 From Theorem \ref{basisofvla}, we obtain in addition the following Corollary.

\begin{corollary}\label{basis}
 The KLR basis $\B$ induces a basis in any irreducible highest weight module.
\end{corollary}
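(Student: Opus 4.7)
The plan is essentially to combine two results already established in the paper, so this corollary is a one-line deduction rather than a substantive argument. First, by Corollary \ref{canonicaltype} the KLR basis $\B$ of $\f$ is a basis of canonical type in the sense of Definition \ref{def:can}. Second, Theorem \ref{basisofvla} (attributed to Baumann) asserts precisely that for any basis of canonical type $\B$ and any dominant integral weight $\la \in P^+$, the subset $\{bv_\la \mid b\in\B,\ bv_\la\neq 0\}$ forms a basis of the irreducible highest weight module $V(\la)$.

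So concretely I would just note: given $\la\in P^+$ and a highest weight vector $v_\la\in V(\la)$, Corollary \ref{canonicaltype} guarantees the hypothesis of Theorem \ref{basisofvla} is met, and the conclusion of that theorem is exactly the statement to be proved. No additional arguments about crystals, Grothendieck groups, or the structure of KLR modules are needed at this stage because all the hard content has been absorbed into the verification that $\B$ is of canonical type (which ultimately rests on the identification $\sB_+ \cong B(\infty)$ from Theorem \ref{crystal} and the categorification theorem, Theorem \ref{projiso}).

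There is no real obstacle here; the only subtlety worth flagging is that the formulation of Theorem \ref{basisofvla} requires the ambient quantum group $U_q(\g)$ associated to the folded Cartan datum $(J,\cdot)$, and that $\f$ is identified with the positive part $U_q(\g)^+$ via Theorem \ref{projiso}, so that left multiplication by elements of $\B$ makes sense as the action $x \mapsto x^- v_\la$ on $V(\la)$. Once this identification is in place, the proof reduces to a citation.
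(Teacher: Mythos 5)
Your proposal is correct and takes exactly the same route as the paper: the corollary is stated immediately after Theorem \ref{basisofvla} with the remark ``From Theorem \ref{basisofvla}, we obtain in addition the following Corollary,'' so the intended proof is precisely the citation of Theorem \ref{basisofvla} together with Corollary \ref{canonicaltype}. Your extra remark about identifying $\f$ with $U_q(\g)^+$ via Theorem \ref{projiso} so that the action $x\mapsto x^-v_\la$ makes sense is a reasonable point to flag, but it does not change the substance.
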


\begin{remark}
When there is no diagram automorphism, a categorified version of Corollary \ref{basis} is proved in \cite{kangkashiwara} and \cite{webster}.
\end{remark}

When $C$ is a symmetric Cartan matrix, the field $k$ is of characteristic zero and the polynomials $Q_{ij}(u,v)$ are all of the form $\pm (u-v)^{-c_{ij}}$, the basis $\B$ is the same as Lusztig's canonical basis, or equivalently Kashiwara's lower global basis. This follows from comparing the geometric interpretation of the KLR algebras as Ext algebras due to \cite{vv,rouquier2} and Lusztig's construction of the canonical basis in terms of perverse sheaves in \cite{lusztigbook}.
A relevant discussion for how to geometrically construct projective modules in this situation can be found in \cite[\S 8.7]{chrissginzburg}.
%
%
%
%
%
%
%

%
\def\cprime{$'$}

\end{document}